\newtheorem{theorem*}{Theorem}
\theoremstyle{change}
\newtheorem{theorem}{Theorem}[section]
\newtheorem{lemma}[theorem]{Lemma}
\newtheorem{prop}[theorem]{Proposition}
\newtheorem{coro}[theorem]{Corollary}
\theoremstyle{remark}
\newtheorem{nothing}[theorem]{}
\newenvironment{proof}{\noindent{\bf Proof}\ }{\hfill$\square$}
\newcommand{\Aut}{\mathrm{Aut}}
\newcommand{\CC}{\mathbb{C}}
\newcommand{\Def}{\mathrm{Def}}
\newcommand{\End}{\mathrm{End}}
\newcommand{\Hom}{\mathrm{Hom}}
\newcommand{\id}{\mathrm{id}}
\newcommand{\ind}{\mathrm{Ind}}
\newcommand{\Ind}{\mathrm{Ind}}
\newcommand{\infl}{\mathrm{Inf}}
\newcommand{\Inf}{\mathrm{Inf}}
\newcommand{\Irr}{\mathrm{Irr}}
\newcommand{\Isom}{\mathrm{Iso}}
\newcommand{\lexp}[2]{\setbox0=\hbox{$#2$} \setbox1=\vbox to
                 \ht0{}\,\box1^{#1}\!#2}
\newcommand{\lin}{\mathrm{Lin}}
\newcommand{\res}{\mathrm{Res}}
\newcommand{\Res}{\mathrm{Res}}
\newcommand{\Out}{\mathrm{Out}}
\newcommand{\out}{\mathrm{Out}}
\newcommand{\tw}{\mathrm{Tw}}
\newcommand{\ZZ}{\mathbb{Z}}
\newcommand{\Ctimes}{\mathbb{C}^{\times}}
\newcommand{\C}{\mathbb{C}}
\newcommand{\Tw}{\mathrm{Tw}}
\newcommand{\crc}{\mathbb{C}R_{\mathbb{C}}}
\newcommand{\zm}{\mathbb{Z}/m \mathbb{Z}}
\newcommand{\zmm}{\mathbb{Z}_m}
\newcommand{\znn}{\mathbb{Z}_n}
\newcommand{\zn}{\mathbb{Z}/n \mathbb{Z}}
\newcommand\remO[1]{\textcolor{blue}{\textbf{#1}}}
\begin{document}
\begin{frontmatter}

\title{The functor of complex characters of finite groups}
\author{Mehmet Arslan}
\ead{mehmet$\_$arsln@yahoo.com}
\address{MEF University, Faculty of Economics, Maslak, Istanbul, Turkey}

\author{Olcay Co\c{s}kun\corref{cor1}}
\ead{olcay.coskun@boun.edu.tr}
\address{Bogazici University, Department of Mathematics, Bebek, Istanbul, Turkey\\ \& \\
Bogazici University Feza Gursey Center for Physics and Mathematics, Kandilli, Istanbul, Turkey}

\cortext[cor1]{Corresponding author}
\fntext[fn1]{Both authors are supported by Tubitak-1001-113F240.}

\begin{abstract}
We determine the structure of the fibered biset functor sending a finite group $G$ to the complex vector space of 
complex valued class functions of $G$. Previously, it is studied as a biset functor by Bouc and as a 
$\mathbb C^\times$-fibered biset functor by Boltje and the second author. In this paper, we complete the study by  
considering the other choices of the fiber group. As a corollary, we obtain a (previously known) set of composition
factors of the biset functor of $p$-permutation modules. 
\end{abstract}

\begin{keyword}
fibered biset functors \sep character rings \sep primitive characters \sep $p$-permutation modules
\end{keyword}


\end{frontmatter}

\section{Introduction}\label{sec:intro}

One of the fundamental constructions in representation theory of finite groups is the ring of characters of a group.
 Over a 
field of characteristic zero, it determines all representations. The functorial structure of 
character rings is studied in several ways, including Thvenaz and Webb's  description \cite{TW} of its Mackey 
functor structure, and Bouc's description \cite{bouc} of its biset functor structure. Recently, it is 
investigated by Romero \cite{romero} as a Green biset functor and by Boltje and the second author \cite{bolcay} as a  
$\mathbb{C}^\times$-fibered biset functor. In all these cases, it turns out to be a semi-simple functor. 
In the present paper, we are aiming to determine its structure as a fibered biset functor for some other choices of 
fiber groups $A < \mathbb{C}^\times$. 

Fibered biset functors are introduced by Boltje and the second author in \cite{bolcay} as a general framework
for structures having actions of monomial bimodules. It originates from Bouc's theory of biset functors 
\cite{bouc} where the basic tools are bisets and permutation bimodules. Basic theory of fibered bisets and
fibered biset functors together with a parametrization of simple functors is described in \cite{bolcay}. One of 
the most natural examples of fibered biset functors is the functor $R_\mathbb C$ of complex characters 
under tensor product by monomial bimodules. It is shown in \cite{bolcay} that $\CC R_\CC$ is a $\CC^\times$-fibered biset functor, and hence by restriction, it is an $A$-fibered biset functor for any 
subgroup $A\le \CC^\times$.

In order to determine the $A$-fibered structure of $\CC R_C$, we first analyze the structure of $A$-fibered 
bisets in some special cases. In \cite{bouc}, Bouc proves that any transitive biset is a product of five basic bisets, one corresponding to each
one of common operations from module theory, namely, induction, inflation, transport of structure, deflation 
and restriction. In \cite{bolcay}, Boltje and the second author show that for a large fiber
group $A$, a similar decomposition, with new basic elements, is possible for fibered bisets.  Later in 
\cite{CY}, the second author and Ylmaz show that for abelian groups, this decomposition is simpler. One
of our main results, Theorem \ref{thm:factorization}, is a new decomposition theorem which works for $A$-fibered 
bisets over cyclic groups when $A$ is a small fiber group. 

The problem of determining the fibered biset functor structure of $\CC R_\CC$ falls into three cases depending on the fiber group. For the first case, we consider a large fiber group for which the decomposition
for fibered bisets given in  \cite{bolcay} holds. More precisely, we fix a set $\pi$ of
primes and let $A$ be the group of all $p^n$-th roots of unity for all $n \in \ZZ^+$ and for all $p\in \pi$. In this case,
the functor $\CC R_\CC$ turns out to be semisimple. The simple summands are given by the following theorem. 
\begin{theorem*}\label{thm:intromain1}
Let $A$ be as above. There is an isomorphism of $A$-fibered biset functors
\[
\CC R_\CC \cong  \bigoplus_{\substack{(m,\zeta)\in [\Upsilon_\pi}]} S^A_{\mathbb Z/m\mathbb Z, 1, 1, \C_\zeta}.
\]
where $[\Upsilon_\pi]$ denote the set of all pairs $(m, \zeta)$ such that $m\in\mathbb N$ is a $\pi'$-number
and $\zeta$ is a primitive character of $\Aut(\zm)$. Also given $(m, \zeta)\in[\Upsilon_\pi]$, we denote by $S^A_{\mathbb Z/m\mathbb Z, 1, 1, \C_\zeta}$ the simple $A$-fibered biset functor with minimal group $\zm$ and evaluation $\C_\zeta$ at $\zm$. 
\end{theorem*}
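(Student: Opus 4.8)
The plan is to show that $\CC R_\CC$ is semisimple as an $A$-fibered biset functor and to identify its simple summands by first locating the minimal groups (seeds) and then computing the reduced evaluations there. Recall that for every finite group $G$ one has $\CC R_\CC(G) = \CC \otimes_\ZZ R_\CC(G) = \CC\,\mathrm{cl}(G)$, the space of complex-valued class functions on $G$, with basis the irreducible characters, and that the $A$-fibered biset action is the restriction along $A \le \CC^\times$ of the $\CC^\times$-fibered action established in \cite{bolcay}. Two families of operations carry the argument: transport of structure, which gives a $\CC\Out(G)$-action, and, for each linear character $\lambda$ of $G$ with values in $A$ (equivalently, of order a $\pi$-number), the invertible $A$-fibered $(G,G)$-biset obtained by tensoring with $\lambda$; it acts on $\CC\,\mathrm{cl}(G)$ by $\chi \mapsto \chi\lambda$ and is inverted by tensoring with $\lambda^{-1}$.

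Next I would determine the minimal groups. By Artin's induction theorem every complex character is a $\CC$-linear combination of characters induced from cyclic subgroups, so $\CC R_\CC$ is generated as a functor by its values on cyclic groups and every simple summand has cyclic minimal group, say $C_n$, where $n = n_\pi n_{\pi'}$ with $n_\pi$ the $\pi$-part and $n_{\pi'}$ the $\pi'$-part of $n$. For a faithful character $\chi$ of $C_n$ write $\chi = \alpha\beta$ with $\alpha$ inflated from the quotient $C_n/C_{n_{\pi'}} \cong C_{n_\pi}$ and $\beta = \Inf_{C_n/C_{n_\pi}}^{C_n}\bbar$ for a faithful character $\bbar$ of $C_n/C_{n_\pi} \cong C_{n_{\pi'}}$; since $C_n = C_{n_\pi} \times C_{n_{\pi'}}$ internally this factorization exists and is unique, and $\alpha$ has $\pi$-power order, hence is $A$-valued. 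Tensoring by $\alpha$ is therefore an invertible $A$-fibered automorphism of $C_n$, so $\chi$ is the image of $\bbar$ under a composite of an inflation and this automorphism, while every non-faithful character of $C_n$ lies in the subfunctor generated by the proper quotients of $C_n$; hence $C_n$ is not a minimal group unless $n = n_{\pi'}$. Conversely, if $m$ is a $\pi'$-number, the faithful characters of $C_m$ have order $m$, which is not a $\pi$-number when $m > 1$, so $C_m$ has no nontrivial $A$-valued linear character; using the factorization of $A$-fibered bisets from \cite{bolcay} one checks that no composite passing through a group of strictly smaller order can reach the faithful characters of $C_m$. Thus the minimal groups of $\CC R_\CC$ are precisely the cyclic groups $C_m$ with $m$ a $\pi'$-number.

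Then I would compute, for a $\pi'$-number $m$, the reduced evaluation $\overline{\CC R_\CC}(C_m)$, i.e.\ the quotient of $\CC R_\CC(C_m)$ by the sum of the images of all $A$-fibered bisets that factor through a group of strictly smaller order. Since $C_m$ admits no nontrivial $A$-valued linear character, no new twists enter here and the computation coincides with the classical one: modulo the proper quotients one is left with the span of the $\varphi(m)$ faithful characters, which as a module under transport of structure is the regular representation of $\Aut(C_m) = (\ZZ/m\ZZ)^\times$; and induction from the proper cyclic subgroups $C_d$ ($d \mid m$, $d < m$) contributes, inside that span, exactly the isotypic components $\C_\zeta$ of the characters $\zeta$ of $(\ZZ/m\ZZ)^\times$ whose conductor strictly divides $m$, namely the imprimitive ones. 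Hence $\overline{\CC R_\CC}(C_m) \cong \bigoplus_\zeta \C_\zeta$, the sum over the primitive characters $\zeta$ of $\Aut(C_m)$, a semisimple module over the essential algebra $\hat E_A(C_m) \cong \CC(\ZZ/m\ZZ)^\times$. Consequently the composition factors of $\CC R_\CC$ with minimal group $C_m$ are exactly the functors $S^A_{\ZZ/m\ZZ,1,1,\C_\zeta}$ for $\zeta$ a primitive character of $\Aut(C_m)$, each with multiplicity one; an imprimitive $\zeta$ is absorbed by a summand with smaller minimal group and does not contribute.

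Finally I would assemble the pieces into an isomorphism. For each pair $(m,\zeta) \in [\Upsilon_\pi]$, since $\C_\zeta$ is a direct summand of the semisimple $\hat E_A(C_m)$-module $\overline{\CC R_\CC}(C_m)$, there is a nonzero, hence injective, homomorphism $S^A_{\ZZ/m\ZZ,1,1,\C_\zeta} \to \CC R_\CC$, and these assemble into a morphism $\Phi\colon \bigoplus_{(m,\zeta) \in [\Upsilon_\pi]} S^A_{\ZZ/m\ZZ,1,1,\C_\zeta} \to \CC R_\CC$. It is surjective by the second step together with Artin induction, which show that $\CC R_\CC$ is generated as a functor by the images of these morphisms; and a comparison of dimensions at each finite group $G$ -- both evaluations having dimension equal to the number of conjugacy classes of $G$ -- then forces $\Phi$ to be injective as well, hence an isomorphism. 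In particular $\CC R_\CC$ is semisimple. I expect the main obstacle to be the third step: showing precisely that induction from the proper cyclic subquotients cuts the regular representation of $\Aut(C_m)$ down to exactly its primitive part; and, in the second step, making the reduction to the basic $A$-fibered bisets of \cite{bolcay} clean enough both to justify the absorption of the $\pi$-part and to rule out any contribution surviving below $C_m$.
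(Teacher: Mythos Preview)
Your argument is sound in outline but follows a different route from the paper's. The paper does not recompute minimal groups and essential evaluations from scratch. Instead it takes Bouc's decomposition of $\CC R_\CC$ as an ordinary biset functor, $\CC R_\CC \cong \bigoplus_{(m,\zeta)\in\Upsilon} S_{\ZZ/m\ZZ,\CC_\zeta}$, as input, and then defines a $\pi$-equivalence on $\Upsilon$ (two pairs are equivalent when their $\pi'$-parts agree). The heart of the proof is Theorem~\ref{thm:main1}: the $A$-fibered subfunctor generated by a single biset summand $S_{\ZZ/m\ZZ,\CC_\zeta}$ is exactly the sum over its equivalence class; one inclusion is an explicit twist/inflation computation expressing $\tilde\nu$ in terms of $\tilde\zeta$, and the other uses the decomposition of Equation~\ref{thm1:decomposition} to show that any transitive $A$-fibered biset hitting $\tilde\nu$ must come from an equivalent pair. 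A short second step then shows each such subfunctor is simple and identifies it with $S^A_{\ZZ/m\ZZ,1,1,\CC_\zeta}$. Your approach instead locates the minimal cyclic $\pi'$-groups directly and computes the reduced evaluation there; it is more conceptual, but the paper's route yields as an immediate byproduct the explicit restriction formula (Corollary~\ref{cor:restSimp}) describing each simple $A$-fibered summand as a sum of Bouc's biset simples, which your approach would have to recover separately.

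One genuine gap to flag: your final injectivity argument by ``comparison of dimensions at each finite group $G$'' is unjustified as written, since you have no independent formula for $\dim S^A_{\ZZ/m\ZZ,1,1,\CC_\zeta}(G)$. Fortunately the dimension count is unnecessary: once you know the summands are pairwise non-isomorphic simples (distinct $m$ give distinct minimal groups, and for fixed $m$ distinct primitive $\zeta$ give non-isomorphic $\CC\Aut(\ZZ/m\ZZ)$-modules), any morphism from their direct sum which is nonzero on each summand is automatically injective, because its kernel, being a subfunctor of a multiplicity-free semisimple functor, would have to contain a full summand. Combined with your surjectivity argument this finishes the proof. Your third step, identifying the reduced evaluation at $C_m$ with the primitive part of the regular $\CC(\ZZ/m\ZZ)^\times$-module, is indeed the delicate point; but since $m$ is a $\pi'$-number and $A$ has only $\pi$-torsion, the essential algebra at $C_m$ is literally $\CC\Out(C_m)$ and you may quote Bouc's computation verbatim there.
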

As an immediate corollary of this theorem, we recover a set of composition factors for the biset functor of 
$p$-permutation modules. See Corollary \ref{cor:p-perm}. A more general set is already determined by Baumann in \cite{Bau}. 

We also consider the case where $A$ is a finite cyclic $p$-group. In this case, the functor $\CC R_\CC$ is
still semisimple. However 
determination of simple summands is more involved since now there
are new basic $A$-fibered bisets which do not appear in the previous case. We defer the detailed notation to Section 
\ref{sec:mainR}
and state the following theorem.
\begin{theorem*}\label{thm:intromain2}
Let $A$ be a finite cyclic $p$-group. There is an isomorphism of $A$-fibered biset functors
\[
\CC R_\CC \cong \bigoplus_{(m,\zeta)\in[\Upsilon_p]} S_{{\zm, 1, 1, \mathbb C_\zeta}}^A \oplus \bigoplus_{(m,\zeta)\in[\Upsilon_{>|A|}]} S_{\zm, A, \alpha, \mathbb C_\zeta^\alpha}^A.
\]
where $[\Upsilon_p]$ is defined as above, and $\Upsilon_{>|A|}$ denotes the set of all pairs $(m, \zeta)$ as 
above with $m_p > |A|$ and $[\Upsilon_{>|A|}]$ is a full set of representatives of $A$-equivalence classes described in Section \ref{sec:mainR}. 
\end{theorem*}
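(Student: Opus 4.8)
The plan is to determine the simple summands of $\CC R_\CC$ by locating its minimal groups and, at each of them, decomposing the residual module over the corresponding essential algebra. For a finite group $G$ write
\[
\partial(\CC R_\CC)(G) \;=\; \CC R_\CC(G)\Big/\sum_{|H|<|G|}\;\sum_{f}\; f\big(\CC R_\CC(H)\big),
\]
the quotient by the images of all (linear combinations of) transitive $A$-fibered $(G,H)$-bisets $f$ with $|H|<|G|$; this is naturally a module over the essential algebra $\widehat{E}_A(G)$. The strategy is: (i) show $\partial(\CC R_\CC)(G)=0$ unless $G$ is cyclic; (ii) for $G=\zm$, use Theorem \ref{thm:factorization} to describe $\widehat{E}_A(\zm)$ and the module $\partial(\CC R_\CC)(\zm)$ explicitly on the basis of characters; (iii) decompose that module into simples, observe semisimplicity, and match the pieces against the parametrization of simple $A$-fibered biset functors from \cite{bolcay}; (iv) reassemble, using self-duality of $\CC R_\CC$ to promote the resulting composition series to a direct sum.

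For step (i), Artin's induction theorem gives, for any finite group $G$, that $\CC R_\CC(G)=\sum_{C\le G\text{ cyclic}}\Ind_C^G\big(\CC R_\CC(C)\big)$; hence whenever $G$ is non-cyclic every element of $\CC R_\CC(G)$ already lies in the image of an induction map from a proper subgroup, so $\partial(\CC R_\CC)(G)=0$. Thus only cyclic minimal groups $\zm$ can occur, and it remains to analyze $\CC R_\CC$ on cyclic groups.

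For step (ii), identify $\CC R_\CC(\zm)$ with the vector space $\CC\widehat{\zm}$ spanned by the linear characters of $\zm$, and write $m=m_p m_{p'}$ with $m_p$ the $p$-part. On this basis $\Aut(\zm)$ permutes $\widehat{\zm}$, inflation from $\zm/(\zm)_p\cong\mathbb Z/m_{p'}\mathbb Z$ embeds $\CC\widehat{\mathbb Z/m_{p'}\mathbb Z}$ as the span of characters trivial on $(\zm)_p$, induction from a proper cyclic subgroup contributes only sums over cosets (the ``imprimitive'' span), and the $A$-fibered twist operations act through translation by the subgroup $\Hom(\zm,A)\le\widehat{\zm}$, which has order $\min(m_p,|A|)$ and lies inside the $p$-part. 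By Theorem \ref{thm:factorization} these are all the relevant operations, so $\widehat{E}_A(\zm)$ acts on $\partial(\CC R_\CC)(\zm)$ through the algebra generated by $\Aut(\zm)$ and translation by $\Hom(\zm,A)$. When $1<m_p\le|A|$, $\Hom(\zm,A)$ is the whole $p$-part of $\widehat{\zm}$, so every faithful character of $\zm$ is a twist of an inflation from $\mathbb Z/m_{p'}\mathbb Z$ and dies in the quotient; since non-faithful characters are inflated from proper quotients directly, this forces $\partial(\CC R_\CC)(\zm)=0$. Hence the surviving cyclic minimal groups are exactly those with $m_p=1$ (i.e.\ $m$ a $p'$-number) or $m_p>|A|$.

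For steps (iii)--(iv): when $m$ is a $p'$-number the twist is trivial, $\widehat{E}_A(\zm)$ acts through $\CC\Aut(\zm)$, and $\partial(\CC R_\CC)(\zm)$ is the regular representation of the abelian group $\Aut(\zm)$ carried by the faithful characters, whose simple constituents are the primitive characters $\zeta$ of $\Aut(\zm)$; this yields exactly the summands $S^A_{\zm,1,1,\CC_\zeta}$ with $(m,\zeta)\in[\Upsilon_p]$. When $m_p>|A|$, translation by $\Hom(\zm,A)$ (order $|A|$) acts freely on the faithful characters, so decomposing $\partial(\CC R_\CC)(\zm)$ into $\Hom(\zm,A)$-isotypic components, indexed by faithful characters $\alpha$ of $A$, gives on each component a module for the residual automorphism data that again splits into one-dimensional pieces labelled by characters $\zeta$; the resulting simple functors $S^A_{\zm,A,\alpha,\CC_\zeta^\alpha}$ are then counted, up to the $A$-equivalence of Section \ref{sec:mainR} identifying different admissible pairs $(\zeta,\alpha)$, precisely by $[\Upsilon_{>|A|}]$. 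Finally, since $\CC R_\CC$ carries the nondegenerate symmetric form $\langle\chi,\psi\rangle_G=\tfrac{1}{|G|}\sum_{g}\chi(g)\overline{\psi(g)}$, which makes it self-dual for the $A$-fibered operations, the composition factors computed above exhaust $\CC R_\CC$ and the canonical maps between $\CC R_\CC$ and $\bigoplus S^A$ are mutually inverse, giving the claimed decomposition. The main obstacle is step (iii) in the case $m_p>|A|$: pinning down the internal fiber datum $(A,\alpha)$ and the twisted evaluation $\CC_\zeta^\alpha$, and working out the exact $A$-equivalence so that the summands are counted without repetition; the reduction to cyclic groups, the vanishing range $1<m_p\le|A|$, and the $p'$ case are comparatively routine.
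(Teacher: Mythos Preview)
Your overall strategy---computing the residual module $\partial(\CC R_\CC)(G)$ at each candidate minimal group and decomposing it over the essential algebra, then invoking self-duality---is sound in outline and is a genuinely different route from the paper's. The paper instead takes Bouc's biset-functor decomposition $\CC R_\CC\cong\bigoplus_{(m,\zeta)}S_{\zm,\CC_\zeta}$ as its starting point and asks which of these simple biset subfunctors get merged by the additional $A$-fibered operations: it defines an explicit equivalence relation $\cong_A$ on pairs $(m,\zeta)$, shows (Theorem~\ref{thm:main2}) that the $A$-fibered subfunctor generated by $S_{\zm,\CC_\zeta}$ is exactly $\bigoplus_{(n,\nu)\in[m,\zeta]}S_{\zn,\CC_\nu}$, proves each such $T^A_{\zm,\CC_\zeta}$ is simple, and then identifies it among the $S^A_{G,K,\kappa,V}$.

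There is, however, a genuine gap in your execution. In step~(iii) for $m$ a $p'$-number you assert that $\partial(\CC R_\CC)(\zm)$ is ``the regular representation of $\Aut(\zm)$ carried by the faithful characters.'' This is false: you have divided out only by inflations (killing the non-faithful characters) but not by inductions from proper subgroups. For instance the regular character $\sum_{\chi\in\widehat{\zm}}\chi=\Ind_1^{\zm}(1)$ lies in the image from the trivial group, forcing $\sum_{\chi\ \text{faithful}}\chi=0$ in the quotient, and each proper subgroup contributes further relations. The correct $\partial(\CC R_\CC)(\zm)$ is strictly smaller than the span of faithful characters; its dimension is the number of \emph{primitive} Dirichlet characters modulo $m$, not $\phi(m)=|\Aut(\zm)|$. (Already for $m$ an odd prime your module has dimension $m-1$ while the actual quotient has dimension $m-2$.) This is precisely Bouc's computation, which the paper imports rather than redoes. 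The same omission propagates to the case $m_p>|A|$: there too you work with faithful characters modulo the twist action and ignore the induction relations, so your counting is off and the claimed bijection with $[\Upsilon_{>|A|}]$ is not established.

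Separately, the case $m_p>|A|$ is where the substance lies, and your treatment there is too vague to be a proof. The paper's argument is delicate: it uses the idempotents $f_{(A,\alpha)}$ (Corollary~\ref{lem:actionIpots}), the fact that all faithful $\alpha\in A^*$ are linked (Proposition~\ref{pro:LinkageCyclic}), explicit formulas for twist and isogation on the pieces $\tilde\zeta_\alpha$ (Corollary~\ref{cor:actionIso} and Lemma~\ref{lem:actionTw}), and the group $\Gamma_{(\zm,A,\alpha)}$ to pin down the simple as $S^A_{\zm,A,\alpha,\CC_\zeta^\alpha}$. Your ``$\Hom(\zm,A)$-isotypic decomposition'' is not the same decomposition (the paper decomposes by restriction $\chi\mapsto\chi|_A$, not by eigenvalues of the translation action), and you give no argument for why each resulting piece is simple over the stabilizer, nor for why the pieces are counted without repetition by the specific relation $\cong_A$ of Section~\ref{sec:mainR}.
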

The third and more general case where the fiber group is finite cyclic group of composite order or a product of a finite 
group and an infinite cyclic group as in the first case can be treated similarly.

The paper is organized as follows. In Section \ref{sec:prelim}, we include basic notations and results on fibered biset functors. Section \ref{sec:decomposition} contains our main results on decomposition of fibered bisets over cyclic groups when the fiber group is small. Using these results, we interpret the parametrization of simple fibered biset functors with cyclic minimal group in Section \ref{sec:paramsimple}. We study actions of fibered bisets on characters in Section \ref{sec:main}. The proofs of our main theorems are contained in the final section, Section \ref{sec:mainR}.

We now give some notations that are valid throughout the paper. We let $G$ and $H$ be finite groups and $A$ be a multiplicatively written (not necessarily finite) abelian group. We
also set
\begin{displaymath}
G^{A} := \textrm{Hom}(G,A) \quad \mathrm{and} \quad G^{*} := \textrm{Hom}(G,\mathbb{C})
\end{displaymath}
and view them as abelian groups with point-wise multiplication.  

\section{Preliminaries}\label{sec:prelim}
In this section we collect  necessary definitions and results from \cite{bolcay}.  A set $X$ is called an \textit{$A$-fibered 
$G$-set} if $X$ 
is an $A\times G$-set such that the action of $A$ is free with finitely many orbits. The category of $A$-fibered 
$G$-sets together with $(A,G)$-equivariant functions is denoted by ${}_G\mathrm{set}^A$. 
Disjoint union of sets induces a coproduct on the category ${}_G\mathrm{set}^A$.  The 
Grothendieck group $B^A(G)$ of this category is called the \textit{$A$-fibered Burnside group} of $G$.

An  $A$-fibered $G$-set is called \textit{transitive} if the $G$-action on the set of $A$-orbits is 
transitive. It is easy to show that there is a bijective correspondence
between isomorphism classes of transitive $A$-fibered $G$-sets  $X$ and $G$-conjugacy classes of 
pairs $(U,\phi)$ where $U$ is a subgroup of $G$ and $\phi: U\rightarrow A$ is a group homomorphism. The 
bijection is given by associating $X$ to $(U, \phi)$ if $U$ is the stabilizer of some $A$-orbit in $X$ and $U$ acts on 
this $A$-free orbit via $\phi$. We call the pair $(U,\phi)$ corresponding to $X$ the \textit{stabilizing pair} of $X$. We denote by $\mathcal M_G(A)$ the set of all such pairs $(U, \phi)$, and write $[U,\phi]_G$ for the isomorphism class of the $A$-fibered $G$-set with the stabilizing pair $(U,\phi)$. The group $G$ acts on $\mathcal M_G(A)$
via conjugation. We write $\mathcal M_G^G(A)$ or $\mathcal M_G^G$ (whenever $A$ is clear from the content) for the set of $G$-fixed points in $\mathcal M_G(A)$.

The set 
$\mathcal M_G(A)$ has a poset structure given by the ordering $(K, \kappa) \le (L, \lambda)$ if $K \le L$ and $\kappa=\lambda|_{K}$. Together with the above $G$-action, it becomes a $G$-poset.

We further write ${}_G\mathrm{set}_H^A$ for the category of $A$-fibered 
$G\times H$-sets. By the usual convention, we regard any object in this category as an $A$-fibered $(G,H)$-biset.  We write $\big[\frac{G\times H}{U,\phi}\big]$ instead of 
$[U,\phi]_{G\times H}$ wherever we regard it as a fibered biset. With this notation, any ordinary biset 
$\big[\frac{G\times H}{U} \big]$ is regarded as a fibered biset as  $\big[\frac{G\times H}{U,1} \big]$. Here $1$ 
denotes the trivial homomorphism from $U$ to $A$.

Let $K$ be another finite group, $X$ be an $A$-fibered $(G,H)$-biset and $Y$ be an $A$-fibered $(H,K)$-biset.
We write $AH = A\times H$. The usual amalgamated product of $X$ and $Y$ over $AH$ is the set 
$X\times_{AH} Y$ of $AH$-orbits in $X\times Y$. Here $AH$ acts on $X\times Y$ via
\[
(a,h)\cdot (x,y) = (x\cdot (a^{-1},h^{-1}),(a,h)\cdot h)
\]
for any $(a,h)\in A\times H$ and $(x,y)\in X\times Y$. Given $(x,y)\in X\times Y$, we write $(x,_{AH} y)$ for its 
image in $X\times_{AH} Y$. Given $(a, g, k)\in A\times G\times K$, we define
\[
(a,g)\cdot (x,_{AH} y)\cdot k = (agx,_{AH} yk)
\]
and with this action $X\times_{AH} Y$ becomes an $(AG, K)$-biset. In general, this is not an $A$-free set. We
define the tensor product $X\otimes_{AH} Y$ of $X$ and $Y$ as the subset of $X\times_{AH} Y$ consisting 
of $A$-free orbits which is an $A$-fibered $(G,K)$-biset. We denote a free orbit
in $X\times_{AH} Y$ by $x\otimes y$. Note that the tensor product of $A$-fibered bisets is linear in both coordinates
and induces a bilinear map
\[
B^A(G,H)\times B^A(H,K)\to B^A(G,K).
\]

Next we recall the product formula of two transitive $A$-fibered bisets.
By Goursat's Theorem, there is a bijective correspondence between subgroups $U$ of $G\times H$ and 
quintuples $(P, K, \eta, L, Q)$ given by  $P = p_1(U)$ and $Q= p_2(U)$, the first and the 
second projections of $U$. Also $K = k_1(U) = p_1(U\cap (G\times 1))$ and $L = k_2(U) = p_2(U\cap (1\times H))$. 
We clearly have that $K\unlhd P$ and $L\unlhd Q$ and the subgroup $U\le G\times H$ induces an isomorphism
$\eta: Q/L\to P/K$ given by $\eta(hL) = gK$ if $(g,h)\in U$. We call this the \textit{Goursat correspondence} and $U$ 
the Goursat correspondent of  $(p_1(U), k_1(U), \eta, k_2(U), p_2(U))$ and vice versa.

Given a pair $(U, \phi)\in \mathcal M_{G\times H}(A)$, we also write $\phi |_{K\times L} = \phi_1\times 
\phi_2^{-1}$. We call the triple $l(U,\phi) = (P, K, \phi_1)$ (resp. $r(U,\phi) = (Q, L, \phi_2)$) the left invariants
(resp. right invariants) of $(U, \phi)$. We sometimes shorten the invariants and write $l_0(U, \phi) = (K, \phi_1)$
and $r_0(U, \phi) = (L, \phi_2)$.

With this notation, given transitive elements $\big[\frac{G\times H}{U,\phi}\big]$ and $\big[\frac{H\times K}{V,\psi}\big]$, by \cite[Corollary 2.5]{bolcay} we have
\[
\Big[\frac{G\times H}{U,\phi}\Big]\otimes_{AH} \Big[\frac{H\times K}{V,\psi}\Big] = \sum_{\substack{x\in p_2(U)
\backslash H/p_1(V)\\ \phi_2\vert_{H_x} = \lexp{x}\psi_{1}\vert_{H_x}}} \Big[\frac{G\times K}{U\ast \lexp{(x,1)}V,\phi\ast\lexp{(x,1)}\psi}\Big]
\]
where $H_x = k_2(U)\cap\lexp{x}k_1(V)$, and the subgroup $U\ast V$ is the composition 
\[
U\ast V = \{ (g,k)\in G\times K\, \vert\, (g,h)\in U, (h,k)\in V \, \mbox{\rm for some}\, h\in H \}
\]
and the homomorphism $\phi\ast \psi: U\ast V\to A$ is defined by 
\[
(\phi\ast\psi)(g,k) = \phi(g,h)\cdot\psi(h,k)
\]
for some choice of $h\in H$ such that $(g,h)\in U$ and $(h,k)\in V$. Note that the homomorphism $\phi\ast\psi$ is independent of 
the choice of $h\in H$. This product is called the \textit{Mackey product} of fibered bisets.

Now we define fibered biset functors. Let $A$ be an abelian group and $R$ be a commutative ring with unity. Let $\mathcal C:= \mathcal C_R^A$ be the category where
\begin{enumerate}
\item[(i)] the objects of $\mathcal C$ are all finite groups.
\item[(ii)] Given two finite groups $G$ and $H$, $$\Hom_{\mathcal C}(G,H) := R\otimes B^A(H, G) = RB^A(H, G).
$$
\item[(iii)] The composition is the $R$-linear extension of the tensor product of $A$-fibered bisets introduced above.
\end{enumerate}

An \emph{$A$-fibered biset functor over $R$} is an $R$-linear functor $\mathcal C\to {}_R$mod. The class
of all $A$-fibered biset functors together with natural transformations between them forms a category, denoted 
by $\mathcal F:=\mathcal F_R^A$. Since ${}_R$mod is an abelian category, the category $\mathcal F$ is also 
abelian. Classification of simple fibered biset functors is done in \cite[Section 9]{bolcay}. We review 
the parametrization in a special case in Section \ref{sec:paramsimple}.

\section{Fibered bisets for cyclic groups}\label{sec:decomposition}
By \cite[Corollary 7.3.5]{bouc}, one parameter for the classification of simple summands of the biset functor of character rings is finite cyclic groups. It turns out that, also in the case of fibered biset functors, one of the parameters runs over a set of finite cyclic groups. In this section, we specialize some results from \cite{bolcay} to the case of cyclic groups.
 
First we introduce notation for basic fibered bisets which is used throughout the paper. Let $H$ be a subgroup of $G$ 
and $N$ be a normal subgroup of $G$. Also let $G^\prime$ be a finite group and $\lambda: G' \to G$ be 
an isomorphism. 
  
We define \emph{induction} from $H$ to $G$ and \emph{restriction} from $G$ to $H$ as the transitive bisets given,
respectively, by
\[
\ind_H^G := {}_GG_H\quad \mathrm{and}\quad\res^G_H := {}_HG_G
\]
where we regard the set $G$ as a $(G,H)$-biset (resp. as an $(H,G)$-biset) in the usual way, via left and right 
multiplication by the corresponding group. We also define \emph{deflation} from $G$ to $G/N$ and \emph{inflation} from $G/N$ 
to $G$ as the transitive bisets given, respectively, by
\[
\Def_{G/N}^G := {}_{G/N}(G/N)_G\quad \mathrm{and}\quad\infl_{G/N}^G:= {}_G(G/N)_{G/N}.
\]
As above, we regard the set $G/N$ as a $(G/N,G)$-biset (and as a $(G,G/N)$-biset) in the usual way. 
Finally, we 
define the \emph{transport of structure} from $G'$ to $G$ through $\lambda$ as the transitive biset given by
\[
\mbox{\rm c}_{G,G'}^\lambda=\Isom^\lambda_{G,G'}:= {}_GG_{G'}
\]
where the $G$ action is the left multiplication and the $G^\prime$-action is multiplication through $\lambda$.
In all these cases, the $A$-action is trivial.

There are three other basic elements that we need in this paper. The first is twist biset defined as follows. Let 
$\phi\in G^A$ be a homomorphism from $G$ to $A$. Then the \textit{twist} by $\phi$ at $G$ is the $A$-fibered 
$(G,G)$-biset  
$$\tw_G^\phi=\Big( \frac{G\times G}{\Delta(G), \Delta_\phi}\Big)$$
where $\Delta(G)$ is the diagonal subgroup of $G\times G$ and $\Delta_\phi(g,g) = \phi(g)$ for any $g\in G$.

The other two basic elements are both idempotents in $B^A(G,G)$. Let $(K,\kappa)\in\mathcal M_G^G(A)$. We write 
$\Delta_K(G) = \{(g,h)\in G\times G : gK = hK\}$ and define $\phi_\kappa: \Delta_K(G)\rightarrow A$
by $\phi_\kappa(g,h) = \kappa(gh^{-1}) = \kappa(h^{-1}g)$. With this notation, following \cite{bolcay}, we define
\[
E_{(K,\kappa)}^G = \Big(\frac{G\times G}{\Delta_K(G), \phi_\kappa} \Big).
\]
For simplicity, we write $e_{(K,\kappa)}^G$ for the isomorphism class of $E_{(K,\kappa)}^G$. Whenever there is
no risk of confusion we drop the letter $G$ and write $e_{(K, \kappa)}$. As shown in \cite[Section 4.3]{bolcay}, for all $(K,\kappa), (L, \lambda)\in\mathcal M_G^G(A)$, we have
\[
e_{(K,\kappa)} \cdot e_{(L,\lambda)} = \begin{cases} 
      e_{(KL, \kappa\lambda)} & \mathrm{if} \,\, \kappa|_{K\cap L} = \lambda|_{K\cap L} \\
      0 &  \mathrm{otherwise.}
   \end{cases}
\]
The following construction introduces orthogonal idempotents from $e_{(K, \kappa)}$. Let $\mu^{\triangleleft}_{K,L}$
denote the Mobius coefficient with respect to the poset of normal subgroups of $G$. Given $(K, \kappa)\in 
\mathcal M_G^G$, we define
\[
f_{(K,\kappa)} = \sum_{(K,\kappa)\le (L, \lambda)\in M_G^G}\mu^\triangleleft_{K, L} e_{(L, \lambda)}. 
\]
By \cite[Proposition 4.4]{bolcay}, the idempotents $f_{(K,\kappa)}$ form a set of mutually orthogonal idempotents in $B^A(G,G)$ summing up to the identity element $e_{(1,1)}$. For further properties of these elements, we refer to
Section 4 of \cite{bolcay}.

In \cite{bouc}, Bouc showed that any transitive $(G, H)$-biset $(G\times H)/U$ is equal to a Mackey product of the 
above five basic bisets. More precisely, we have  
\begin{align*}
 \left(\frac{G\times H}{U}\right)=\Ind_{P}^{G}\, \Inf_{P/K}^{P}{\Isom}_{P/K,Q/L}^{\eta}\Def_{Q/L}^{Q}\Res_{Q}^{H}
\end{align*}
where $(P, K, \eta, L, Q)$ is the Goursat correspondent of $U$. A similar, but partial, decomposition for a transitive 
$A$-fibered $(G,H)$-biset is given in \cite[Proposition 2.8]{bolcay}. Particularly if $\phi: U\to A$, then we have
\begin{equation}\label{eqn:partial}
 \left(\frac{G\times H}{U,\phi}\right)=\Ind_{P}^{G} \, \Inf_{P/\hat{K}}^{P}\otimes_{AP/\hat{K}}Y\otimes_{AQ/\hat{L}}
\Def_{Q/\hat{L}}^{Q}\Res_{Q}^{H}
\end{equation}
where $\hat{K}$ and $\hat{L}$ are kernels of $\phi_{1}$ and $\phi_{2}$, respectively. Here $Y$ is a transitive
$A$-fibered $(P/\hat{K},Q/\hat{L})$-biset with full left and right projections and faithful left and right restrictions 
of the induced fiber homomorphism $\phi^\prime$. A further decomposition of $Y$ is given in 
\cite[Section 10]{bolcay} when $A$ satisfies a condition. We recall the condition.

\begin{nothing} {\bf Hypothesis.}\label{hypo}
There is a (unique) set $\pi$ of primes such that for every $n\in \mathbb N$, the $n$-torsion part of $A$ is cyclic of order $n_\pi$,
where $n_\pi$ denotes the $\pi$-part of $n$.
\end{nothing}

In \cite{CY}, a simplified decomposition for transitive $A$-fibered $(G,H)$-bisets is given when 
$A$ satisfies the above hypothesis and $G$ and $H$ are abelian groups. We recall this decomposition. Let 
$(U,\phi)\in \mathcal M_{G\times H}(A)$. We write $(P,K,\eta,Q,L)$ for the Goursat correspondent of $U$. We also write 
$\tilde \phi = \tilde\phi_1\times \tilde\phi_2$ for an extension of $\phi$ to $P\times Q$ which exists since the group $P\times Q$ is 
abelian and $A$ is divisible (by the hypothesis). Then by \cite[Theorem 1]{CY}, we have

\begin{equation}\label{thm1:decomposition}
\Big(\frac{G\times H}{U,\phi}\Big) \cong \Ind_{P}^G \tw_{P}^{\tilde \phi_1}\Inf_{P/K}^{P}
\Isom_{P/K,Q/L}^\eta\Def^{Q}_{Q/L}\tw^{\tilde \phi_2}_{Q}\Res^H_{Q}.
\end{equation}
Note that, this decomposition is valid whenever $|A|_p\ge \mathrm{max}(|G|_p, |H|_p)$ for any prime number $p$ since the extension $\tilde\phi$ would still exist under this last condition.
For the rest of this section, we determine the 
decomposition of $A$-fibered $(G,H)$-bisets when $G, H$ and $A$ are all finite cyclic groups.

By the next lemma, we reduce this problem to $p$-groups. This is the fibered 
version of \cite[Proposition 2.5.14]{bouc}.

\begin{lemma}\label{lem:direct-product}
Let $X$ be an $A$-fibered $G$-set and $Y$ be an $A$-fibered $H$-set. Then the Cartesian product $X\times Y$ is an $A$-fibered 
$G\times H$-set via
\[
(g, h, a) \cdot (x,y) = (a\cdot g\cdot x, a\cdot h\cdot y)
\]
for all $(g, h, a)\in G\times H\times A$ and $(x, y)\in X\times Y$. Moreover the correspondence $(X, Y)\mapsto X\times Y$ induces a bilinear
map from $B^A(G)\times B^A(H)$ to $B^A(G\times H)$ and hence a group homomorphism 
\[
B^A(G)\otimes_\mathbb Z B^A(H) \to B^A(G\times H).
\]
This is a unital injective ring homomorphism which becomes an isomorphism when $G$ and $H$ are of coprime orders.
\end{lemma}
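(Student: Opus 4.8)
The plan is to follow the classical template of \cite[Proposition 2.5.14]{bouc}, adapting each step to the fibered setting. First I would verify that the stated action of $G \times H \times A$ on $X \times Y$ is well defined, i.e.\ that the $A$-action is free with finitely many orbits: freeness is inherited from either factor (if $a \cdot (x,y) = (x,y)$ then $a \cdot x = x$, forcing $a = 1$ since $A$ acts freely on $X$), and the number of $A$-orbits on $X \times Y$ is easily bounded by (number of $A$-orbits on $X$) times (number of $A$-orbits on $Y$), since a choice of orbit representatives $x_i$ on $X$ and $y_j$ on $Y$ gives that every element of $X \times Y$ is $A$-conjugate to some $(x_i, y_j)$. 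Hence $X \times Y \in {}_{G\times H}\catfont{set}^A$, and $(X,Y) \mapsto X \times Y$ respects disjoint unions in each variable (distributivity of $\times$ over $\sqcup$), so it descends to a bilinear map $B^A(G) \times B^A(H) \to B^A(G \times H)$ and hence to a homomorphism on the tensor product $B^A(G) \otimes_{\ZZ} B^A(H) \to B^A(G \times H)$.

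Next I would check the ring structure. The product on $B^A(G)$ is induced by the diagonal Cartesian product $X \times_A X' $ (the $A$-free part of $X \times X'$ with diagonal $G$-action and antidiagonal $A$-action), with unit the class $[G/1, 1]$ of the $A$-fibered $G$-set $A \times G$ with free diagonal $A \times G$-action. The map sends the unit $[A\times G] \otimes [A \times H]$ to the class of $A \times G \times H$ with its natural action, which is exactly the unit of $B^A(G\times H)$; so the map is unital. For multiplicativity one compares $(X \times Y) \times_A (X' \times Y')$ with $(X \times_A X') \times (Y \times_A Y')$: there is a natural identification shuffling the factors, and one must check it is compatible with the $A$- and $(G\times H)$-actions and, crucially, that it matches up the $A$-free parts on both sides. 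This last point is the one subtlety — an orbit $((x,y),(x',y'))$ is $A$-free on the left exactly when its images are $A$-free on the right — and it follows from the freeness characterization of the preceding step.

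For injectivity I would use the basis of $B^A(G)$ given by the classes $[U,\phi]_G$, $(U,\phi) \in \calM_G(A)$ up to $G$-conjugacy, established in Section \ref{sec:prelim}. The key computation is that $[U,\phi]_G \times [V,\psi]_H$ is again transitive as an $A$-fibered $G\times H$-set, with stabilizing pair $(U \times V, \phi \times \psi)$ (the stabilizer of the orbit through $(x,y)$ is the product of the stabilizers, and the fiber homomorphism is the product); thus the homomorphism sends the $\ZZ$-basis $\{[U,\phi]_G \otimes [V,\psi]_H\}$ of $B^A(G) \otimes_{\ZZ} B^A(H)$ injectively into the $\ZZ$-basis $\{[W,\theta]_{G\times H}\}$ of $B^A(G\times H)$, because distinct pairs $(U,\phi), (V,\psi)$ (up to conjugacy) give distinct pairs $(U\times V, \phi\times\psi)$ (up to conjugacy in $G \times H$ — here one uses that every subgroup of $G \times H$ of the form $U \times V$ is recovered from its two projections). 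Hence the map is a split injection of free abelian groups.

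Finally, when $\gcd(|G|,|H|) = 1$, I would show the map is onto, which makes it an isomorphism. It suffices to show every pair $(W,\theta) \in \calM_{G\times H}(A)$ is $G\times H$-conjugate to one of the form $(U \times V, \phi \times \psi)$. For the subgroup: in the Goursat correspondent $(P,K,\eta,L,Q)$ of $W$, the isomorphism $\eta\colon Q/L \to P/K$ forces $|P/K| = |Q/L|$ to divide both $|G|$ and $|H|$, hence $P = K$ and $Q = L$, so $W = P \times Q = U \times V$ with $U = p_1(W)$, $V = p_2(W)$. For the homomorphism: $\theta\colon U \times V \to A$ is automatically $\theta|_U \times \theta|_V$ since $\Hom(U \times V, A) = \Hom(U,A) \times \Hom(V,A)$. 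Therefore $(W,\theta) = (U \times V, \theta|_U \times \theta|_V)$ lies in the image, and the map is surjective, hence bijective. The only genuine obstacle in the whole argument is the bookkeeping in the multiplicativity check (matching $A$-free parts under the shuffle isomorphism); everything else is a direct transcription of Bouc's argument with the fiber homomorphisms carried along, using that $\Hom(-,A)$ turns products into products.
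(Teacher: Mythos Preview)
Your proposal is correct and follows exactly the approach the paper takes: the paper simply defers to \cite[Proposition~2.5.14]{bouc} and remarks that the single new ingredient in the fibered setting is the verification that the diagonal $A$-action on $X\times Y$ is free, which is precisely the first check you carry out. One small slip worth fixing: the unit of $B^A(G)$ is $[G,1]_G$ (the fibered set $A$ with trivial $G$-action and regular $A$-action), not the free object $A\times G$; with this correction your unital check goes through, and the rest of your plan matches Bouc's template verbatim.
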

Proof of this lemma is almost identical to the proofs of the results mentioned above. The only difference is
that one needs to check that the Cartesian product $X\times Y$ is free as an $A$-set under the diagonal action $a\cdot (x, y) = (a\cdot x, 
a\cdot y)$ which clearly holds. We skip the details of the proof.

Now given an abelian group $G$, we decompose it as
\[
G \cong \prod_{p} G_p
\] 
where the product is over the set of distinct prime divisors of $|G|$ and for a prime divisor $p$, we write $G_p$ for the Sylow $p$-subgroup
of $G$. Therefore, by the above lemma, given any element $X\in B^A(G, G)$, we can decompose it as 
\[
X \cong \prod_{p} X_p
\]
where $X_p\in B^A(G_p, G_p)$ for each prime divisor $p$ of $|G|$. Also it is easy to show that if $H$ is abelian of
 order less than $|G|$ and $X \cong Y\otimes_{AH} Z$, then we also have
\[
X\cong \prod_p Y_p\otimes_{AH_p} Z_p
\]
where $H_p$ is the Sylow $p$-subgroup of $H$ and $Y_p$ and $Z_p$ are chosen as above. Therefore the decomposition can be done by considering one prime at a time.
Notice that if $A$ has trivial $p$-torsion then 
$B^A(G_p, G_p) \cong B(G_p, G_p)$ and hence in this case, any $A$-fibered $(G_p, G_p)$-biset may be regarded as an ordinary $(G_p, G_p)$-biset and hence can be decomposed using Bouc's result. If $|A|_p\ge |G|_p$, then we can use Equation \ref{thm1:decomposition} to decompose $X_p$. Hence, for abelian groups $G$ and $H$, to determine a factorization of an $A$-fibered $(G,H)$-biset $X$, we only need to determine the case where the groups $G, H$ and $A$ are $p$-groups for a fixed prime $p$ and $|A| < |G|\ge |H|$. Then it turns 
out that the decomposition depends on the order of $|A|$ relative to $|H|$.

More precisely, we fix $l\in \mathbb N$ and let $A = \langle a\rangle$ be a cyclic group of 
order $p^l$, written multiplicatively. Also let $G$ and $H$ be cyclic groups of orders $p^m$ and $p^n$, 
respectively. We assume that $l < m\ge n$. Note that the case $m\le n$ can be done similarly or by taking opposites.

Let $(U, \phi)\in \mathcal M_{G\times H}(A)$. By \cite[Proposition 2.8]{bolcay}, we may also assume that 
\[
p_1(U) = G,\,\,\, p_2(U) = H,\,\,\, \ker (\phi_1) = 1,\,\,\, \ker (\phi_2) = 1.
\]
Since $A$ is finite, the above assumptions put some conditions on $(U, \phi)$. Indeed, to have a faithful homomorphism 
$\phi_1 : k_1(U) \to A$, we should have $|k_1(U)| \le |A|$. Similarly, for $k_2(U)$. Thus, if $(U, \phi)\in 
\mathcal M_{G\times H}(A)$ satisfies the above conditions, we also have 
\[
|k_1 (U)|\le |A| \ge |k_2(U)|.
\]
Now given a pair $(U, \phi)$ as above, we let $(G, K, \eta, L, H)$ be its Goursat correspondent. We fix a 
generator $h$ of $H$ and let $g$ be a generator of $G$ such that $\eta (hL) = gK$. This is possible since $p_1(U) 
= G$. Note that with this choice, we get $(g,h)\in U$, and hence $\Delta_\eta(G) :=\langle(g, h)\rangle\le U$. 
Notice also that 
$|\Delta_\eta(G) \cap (1\times L)| =1$
and $|\Delta_\eta(G)\cdot(1\times L)| = |G|\cdot |L| = |U|$. Hence we have
\[
U = \Delta_\eta(G) \cdot (1\times L) \quad \mathrm{and}\quad G\times H = \Delta_\eta(G)\cdot(1\times H).
\]
We further write
\[
\phi = \phi_\eta\cdot\phi_L
\]
where $\phi_\eta$ (resp. $\phi_L$) is the restriction of $\phi$ to $\Delta_\eta(G)$ (resp. to $1\times L$). Note that, 
$\phi_L = 1\times \phi_2$ and hence it is faithful by our assumption. 

Next suppose $|A|\ge |H|$, then $\phi_L$ extends to a homomorphism $\phi_H: 1\times H=H \to 
A$. 
Given $(x, y)\in G\times H$, we write $(x, y) = (x, \tilde x)\cdot (1, (\tilde x)^{-1}y)$ as an element of 
$\Delta_\eta(G)\cdot(1\times H)$. Define $\psi : G\times H\to A$ by 
$\psi(x, y) = \phi_\eta(x, \tilde x)\phi_H((\tilde x)^{-1}y)$. This is a homomorphism since both $\phi_\eta$ and
$\phi_H$ are homomorphisms and $\widetilde{xt} = \tilde x\tilde t$ for any $x, t\in G$. Also $\psi$ is
an extension of $\phi$. Indeed for any $x\in G$, we have $(x, \tilde x)\in U$ by definition, and hence if $(x, y)$ is also in 
$U$, then $(1, (\tilde x)^{-1}y)\in U$.
In particular, we can apply Equation \ref{thm1:decomposition} and get
\[
\Big( \frac{G\times H}{U, \phi}\Big) \cong \tw_G^{\psi_G}\infl_{G/K}^G\Isom_{G/K, H/L}^{\eta}\Def_{H/L}^H\tw_H^{\phi_H}.
\]
Here, we write $\psi_G$ for the restriction of $\psi$ to $G = G\times 1$. Note that the term $\tw_G^{\psi_G}$ may be
decomposed further, but for our aims, this version is sufficient.

For the second case where $|A| < |H|$, by \cite[Proposition 4.2.(b)]{bolcay}, there is an isomorphism 
\begin{equation}\label{eqn:finite1st}
\Big( \frac{G\times H}{U, \phi}\Big) = \Big( \frac{G\times H}{\Delta_\eta(G)\cdot(1\times L), \phi_\eta\cdot \phi_L}\Big) \cong
\Big( \frac{G\times H}{\Delta_\eta(G), \phi_\eta}\Big) \otimes_{AH} E_{(L,\phi_L)}^H.
\end{equation}
of $A$-fibered $(G,H)$-bisets. We consider the factorization of the terms in the above isomorphism separately. First consider the pair $(\Delta_\eta(G),
 \phi_\eta)\in \mathcal M_{G\times H}$. We clearly have
\[
p_1(\Delta_\eta(G)) = G,\,\,\, p_2(\Delta_\eta(G)) = H\,\,\, \mbox{\rm and}\,\,\, k_2(\Delta_\eta(G)) = 1.
\]
In general, $k_1(\Delta_\eta(G)) \neq K$, and we let $K_\eta = k_1(\Delta_\eta(G))$, and 
\[
\tilde\eta: H\to G/K_\eta
\]
be the canonical isomorphism determined by $\Delta_\eta(G)$. Also let $\tilde\phi: G\to A$ be the character given by 
$\tilde\phi(g) = \phi_\eta(g, h)$. With this notation, we have 
\[
\Big( \frac{G\times H}{\Delta_\eta(G), \phi_\eta}\Big) = \tw^{\tilde\phi}_G \infl_{G/K_\eta}^G \Isom_{G/K_\eta, H}^{\tilde\eta}.
\]
Indeed, to prove the equality, write $(V, \psi)$ for the stabilizing pair of the right hand side. Then 
\begin{eqnarray*}
V &=& \Delta(G)\ast \{(x, xK_\eta): x\in G\}\ast \{ (\tilde\eta(y), y): y\in H \} \\ 
&=& \{(x, y)\in G\times H: xK_\eta = \tilde\eta(y)\} = \Delta_\eta(G).
\end{eqnarray*}
On the other hand, since both the inflation and the isomorphism bisets have trivial fiber homomorphism, we get $\psi = \tilde\phi$, proving
the equality.

Once more the term $\tw_G^{\tilde\phi}$ may be decomposed further but we do not consider it. For the other term in Equation \ref{eqn:finite1st}, we prove a more general criterion for an idempotent 
$e_{(K, \kappa)}^G$ to be reduced. 

\begin{theorem}\label{thm:redCrit} Let $G$ and $A$ be finite cyclic $p$-groups. Then 
\begin{enumerate}
\item[(i)] If $|A|\ge |G|$, then $e_{(K, \kappa)}\notin I_G$ if and only if $(K, \kappa) = (1,1)$.
\item[(ii)] If $|A|< |G|$, then $e_{(K, \kappa)}\notin I_G$ if and only if $\kappa$ is faithful. 
\end{enumerate}
\end{theorem}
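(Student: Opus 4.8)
Write $|G|=p^{m}$ and $|A|=p^{l}$, and recall that $I_{G}\subseteq B^{A}(G,G)$ is the ideal of \emph{reduced} elements, i.e.\ those factoring through groups of order $<p^{m}$; equivalently $I_{G}$ is the $\ZZ$-span of the transitive constituents of products $X\otimes_{AH}Y$ with $X\in B^{A}(G,H)$, $Y\in B^{A}(H,G)$ and $|H|<p^{m}$. The first thing I would record is the Goursat data of $e_{(K,\kappa)}^{G}=\big(\frac{G\times G}{\Delta_{K}(G),\phi_{\kappa}}\big)$: its quintuple is $(G,K,\id,K,G)$, and $\phi_{\kappa}$ restricts to $\kappa$ on both of its left and right kernels, so $\ker\phi_{1}=\ker\phi_{2}=\ker\kappa$. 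Since $G$ is cyclic, $(K,\kappa)=(1,1)$ is equivalent to $K=1$.

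Two implications are then immediate. For (i): if $(K,\kappa)\neq(1,1)$, i.e.\ $K\neq1$, then $|A|_{p}\ge|G|_{p}$, so $\phi_{\kappa}$ extends to $\tilde\phi=\tilde\phi_{1}\times\tilde\phi_{2}$ on $G\times G$, and Equation~\ref{thm1:decomposition}, after removing the trivial $\Ind_{G}^{G}$, $\Res_{G}^{G}$ and $\Isom_{G/K,G/K}^{\id}$, gives
\[
e_{(K,\kappa)}^{G}\cong\tw_{G}^{\tilde\phi_{1}}\,\Inf_{G/K}^{G}\,\Def_{G/K}^{G}\,\tw_{G}^{\tilde\phi_{2}},
\]
a product through $G/K$, so $e_{(K,\kappa)}^{G}\in I_{G}$; conversely $e_{(1,1)}^{G}=\id_{G}$ is a unit of $B^{A}(G,G)$ and hence not in the proper ideal $I_{G}$, which settles (i). For the ``$\Rightarrow$'' direction of (ii): if $\kappa$ is not faithful then $\ker\kappa\neq1$, and Equation~\ref{eqn:partial} applied to $(\Delta_{K}(G),\phi_{\kappa})$ exhibits $e_{(K,\kappa)}^{G}$ as a product through $G/\ker\kappa$, again of order $<p^{m}$, so $e_{(K,\kappa)}^{G}\in I_{G}$.

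The real content is ``$\Leftarrow$'' of (ii): with $|A|<|G|$ and $\kappa$ faithful I must show $e_{(K,\kappa)}^{G}\notin I_{G}$. Call a transitive biset $\big(\frac{G\times G}{W,\chi}\big)$ \emph{left-full} if $p_{1}(W)=G$ and $\chi|_{k_{1}(W)}$ is faithful. By the Goursat data above, $e_{(K,\kappa)}^{G}$ is left-full, because $\kappa$ is faithful; since distinct transitive bisets form a $\ZZ$-basis of $B^{A}(G,G)$, it suffices to show that $I_{G}$ is contained in the span of the transitive bisets that are \emph{not} left-full. I would do this in two steps. The first step identifies $I_{G}=B^{A}(G,C)\cdot B^{A}(C,G)$, where $C$ is cyclic of order $p^{m-1}$: decomposing a transitive $(G,H)$- or $(H,G)$-biset with $|H|<p^{m}$ by Equation~\ref{eqn:partial} and then decomposing its full-projection essential part, one sees it factors through a group that is simultaneously a section of $H$ and of $G$; a proper section of the cyclic $p$-group $G$ is a quotient of $C$, and $\Def_{C'}^{C}\Inf_{C'}^{C}=\id_{C'}$ reroutes the whole factorization through $C$. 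The second step shows that no transitive constituent of a product $X\otimes_{AC}Y$ of transitive bisets is left-full. Since $C$ is abelian, the Mackey formula collapses $X\otimes_{AC}Y$ to a (possibly zero) multiple of the single transitive biset $\big(\frac{G\times G}{U\ast V,\phi\ast\psi}\big)$, and left-fullness would force $p_{1}(U)=G$ and the left fiber $\phi_{1}=\phi|_{k_{1}(U)}$ of $U$ to be faithful. But $p_{1}(U)=G$ makes the abelian group $U\le G\times C$ surject onto $C_{p^{m}}$, so its largest invariant factor is $p^{m}$ and $U=S\oplus T$ with $S\cong C_{p^{m}}$ mapping isomorphically onto $p_{1}(U)$; the kernel of the projection $U\to C$ equals $k_{1}(U)\times1$ and meets $S$ in a subgroup of order $\ge p$ (since $C_{p^{m}}$ does not embed in $C_{p^{m-1}}$), and on the minimal subgroup of that copy of $C_{p^{m}}$ the map $\phi$ agrees with $\phi|_{S}$, which is trivial there because any homomorphism $C_{p^{m}}\to A$ with $|A|<p^{m}$ kills the minimal subgroup. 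Hence $\phi_{1}$ is not faithful, a contradiction. Therefore $I_{G}$ lies in the span of the non-left-full transitive bisets, and $e_{(K,\kappa)}^{G}$, being left-full, is not in $I_{G}$.

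I expect the second step of the last paragraph to be the main obstacle: one has to carry the Goursat quintuples and the induced fiber homomorphisms faithfully through the Mackey product, and then pin down the isomorphism type of the abelian group $U\le G\times C$ together with the exact position of $k_{1}(U)\times1$ inside it, precisely enough that the divisibility obstruction $|A|<|G|$ forces $\phi|_{k_{1}(U)}$ to be non-faithful. The remaining ingredients — the Goursat computation, the two ``reduced'' implications, and Step~1 — are routine consequences of Equations~\ref{eqn:partial} and~\ref{thm1:decomposition} together with the transitive-biset basis of $B^{A}(G,G)$.
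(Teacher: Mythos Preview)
Your argument is correct and, at its core, the same as the paper's. Part~(i) and the ``$\Rightarrow$'' of (ii) proceed exactly as in the paper (the paper simply cites \cite[Proposition~8.6]{bolcay} for the latter, which is your Equation~\ref{eqn:partial} observation). For ``$\Leftarrow$'' of (ii) the paper argues by direct contradiction for an \emph{arbitrary} smaller group $H$: assuming $e_{(K,\kappa)}$ occurs as a constituent of $\big(\frac{G\times H}{U,\phi}\big)\otimes_{AH}\big(\frac{H\times G}{V,\psi}\big)$, it picks $(g,h)\in U$ with $g$ a generator of $G$, sets $U'=\langle(g,h)\rangle$, notes that $\phi|_{k_1(U')\times 1}$ coincides with $\kappa|_{k_1(U')}$ via Proposition~2.6 of \cite{bolcay}, and concludes that $\phi|_{U'}$ is a faithful character of a cyclic group of order at least $|G|>|A|$, which is impossible. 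This is precisely your Step~2 read in the contrapositive, only without the preliminary reduction to the maximal subgroup~$C$.

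Your Step~1 is therefore unnecessary, and it is also the weakest link in your write-up. The partial decomposition (Equation~\ref{eqn:partial}) of a transitive $(G,H)$-biset shows it factors through $P/\hat K$ (a section of $G$) and through $Q/\hat L$ (a section of $H$), but these are different groups and $P/\hat K$ may well equal $G$; the sentence ``one sees it factors through a group that is simultaneously a section of $H$ and of $G$'' does not follow from that decomposition alone. One can repair Step~1, but it is simpler to drop it: your Step~2 argument goes through verbatim for any $H$ with $|H|<|G|$, since for any $(g,h)\in U$ with $g$ generating $G$ the cyclic group $\langle(g,h)\rangle$ has order divisible by $p^m$ and its unique subgroup of order $p$ already lies in $k_1(U)\times 1$. (Incidentally, the direct-sum splitting $U=S\oplus T$ is never used; only the existence of $S$ matters.)
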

\begin{proof}
If $|A|\ge |G|$, then Equation \ref{thm1:decomposition} is applicable and hence any $A$-fibered $(G,G)$-biset 
can be decomposed into a product of basic fibered bisets. It is clear from the decomposition that the only reduced 
elements are products of twist and isogation. It is also clear that the only element $e_{(K, \kappa)}$ which can be written in this form is $e_{(1,1)}$.

For the second case note that by \cite[Proposition 8.6]{bolcay}, if $e_{(K, \kappa)}\notin I_G$, then $\kappa$
is faithful. Conversely suppose $\kappa$ is faithful. Assume, for a contradiction, that $E_{(K, \kappa)}^G$ is not reduced, and write  
\[
E_{(K, \kappa)}^G = \Big(\frac{G\times H}{U, \phi}\Big)\otimes_{AH} \Big(\frac{H\times G}{V, \psi}\Big)
\]
for a group $H$ of order smaller than $|G|$ and for pairs $(U, \phi)\in \mathcal M_{G\times H}(A)$ and 
$(V, \psi)\in \mathcal M_{H\times G}(A)$. We further let $g$ be a generator of the cyclic group $G$. Then since $(g, g)\in \Delta_K(G)$, there
exists $h\in H$ such that $(g, h)\in U$ and $(h, g)\in V$. Notice that since $|H| < |G|$, the order of $h$ is less than the order of $g$. Now 
consider the subgroup $U'$ of $U$ generated by $(g, h)$. Clearly, $U'$ is isomorphic with $G$ and $k_1(U')$ is isomorphic to the cyclic 
group of order $|G|/|H|$. Also
\[
k_1(U') \le k_1(U) \le k_1(U\ast V) = K.
\]
Here the second inequality follows from \cite[Proposition 2.6 (a)]{bolcay}.  Also by \cite[Proposition 2.6 (b)]{bolcay}, the restriction of $\phi$
to the subgroup $k_1(U')\times 1$ coincides with the restriction of $\kappa \times 1$. In particular, since $\kappa$ is faithful, the 
homomorphism $\phi |_{k_1(U')\times 1}$ is faithful. But clearly $\phi|_{U'}$ extends $\phi |_{k_1(U')\times 1}$ and since $U'$ is cyclic, the
extension must be faithful. This is a contradiction because we assumed that the order of $G$ is strictly greater than $|A|$. Hence
$E_{(K, \kappa)}^G$ must be reduced, as required.
\end{proof}

Note that when $|A| < |G|$ and $\kappa\in K^A$ is not faithful, we have a decomposition
\begin{equation}\label{eqn:non-reduced}
e_{(K, \kappa)} = \infl_{G/\hat K}^G e_{(K/\hat K , \hat\kappa)} \Def^G_{G/\hat K}
\end{equation}
where $\hat K$ is the kernel of $\kappa$ and $\hat\kappa$ is the character induced by $\kappa$. Moreover 
$e_{(K/\hat K , \hat\kappa)}\notin I_{G/\hat K}$.

With the above result, we see that the second term in the factorization (\ref{eqn:finite1st}) is reduced, and hence it does not factor through a
group of smaller order. This completes the proof of the following theorem.
\begin{theorem}\label{thm:factorization}
Let $p$ be a prime number, let $G, H$ and $A$ be finite cyclic $p$-groups with $|G|\ge |H|$ and let $(U, \phi)\in \mathcal M_{G\times 
H}$ such that 
\[
p_1(U) = G,\,\,\, p_2(U) = H,\,\,\, \ker (\phi_1) = 1,\,\,\, \ker (\phi_2) = 1.
\]
Put $K = k_1(U)$ and $L=k_2(U)$.
Then 
\begin{enumerate}
\item[(i)] Suppose $|A|\ge |H|$. Then $\phi_2$ extends to $\phi_H: H\to A$, and let $\psi_G: G\to A$ be as above. Then  the $A$-fibered $(G, H)$-biset $(\frac{G\times H}{U,\phi})$ factors as
\[
\Big( \frac{G\times H}{U, \phi}\Big) \cong \tw_G^{\psi_G}\infl_{G/K}^G\Isom_{G/K, H/L}^{\eta}\Def_{H/L}^H\tw_H^{\phi_H}.
\]
\item[(ii)] Suppose $|A|<|H|$. Let $H = \langle h\rangle, G =\langle g\rangle$ such that $(g,h)\in U$ and 
$\eta: H/k_2(U)\to G/k_1(U)$be the canonical isomorphism induced by $U$. Set $\Delta_\eta(G) = \langle(g,h)\rangle\le G\times H, K_\eta = k_1(\Delta_\eta(G))$. Let $\tilde\eta: H\to G/K_\eta$ be the canonical isomorphism induced by $\Delta_\eta(G)$. Finally let
$\tilde\phi : G\to A, \tilde\phi(g) = \phi(g,h), \,\mathrm{and}\, \phi_L=\phi|_{1\times L}$.
Then the $A$-fibered $(G, H)$-biset $(\frac{G\times H}{U,\phi})$ factors as
\[
\Big( \frac{G\times H}{U, \phi}\Big) \cong \tw^{\tilde\phi}_G \infl_{G/K_\eta}^G \Isom_{G/K_\eta, H}^{\tilde\eta} E_{L, \phi_L}^H.
\]
\end{enumerate}
\end{theorem}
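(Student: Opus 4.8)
The plan is to assemble Theorem \ref{thm:factorization} from the pieces already worked out in the discussion preceding it, by tracking how the hypotheses feed the two cases separately. Throughout, the standing assumptions $p_1(U)=G$, $p_2(U)=H$, $\ker\phi_1 = \ker\phi_2 = 1$ are exactly those under which the partial decomposition \eqref{eqn:partial} and the subsequent constructions were carried out, so the only work is to verify that the intermediate claims made in the running text are valid under the precise numerical hypotheses of each case, and then to invoke them.

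\textbf{Case (i): $|A|\ge |H|$.} First I would record that $\phi_2\colon L\to A$ extends to $\phi_H\colon H\to A$; this is immediate because $A$ is cyclic of order $|A|\ge|H|$, hence contains a cyclic subgroup of order $|H|$, and a homomorphism from a subgroup $L$ of the cyclic group $H$ into such a group always extends (equivalently, $A$ is divisible enough in $p$). Next, following the construction in the text, I would fix the generators $h$ of $H$ and $g$ of $G$ with $\eta(hL)=gK$, so that $(g,h)\in U$ and $G\times H = \Delta_\eta(G)\cdot(1\times H)$, write each $(x,y)\in G\times H$ uniquely as $(x,\tilde x)\cdot(1,\tilde x^{-1}y)$, and define $\psi(x,y) = \phi_\eta(x,\tilde x)\,\phi_H(\tilde x^{-1}y)$. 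I would check, as in the text, that $\psi$ is a homomorphism (using $\widetilde{xt}=\tilde x\tilde t$) and that $\psi$ restricts to $\phi$ on $U$ (using that $(x,\tilde x)\in U$ forces $(1,\tilde x^{-1}y)\in U$ whenever $(x,y)\in U$). The key point is that since $|A|\ge|H|$ and also $|A|<|G|\le\max$, we still have $|A|_p\ge|H|_p$ at the relevant prime, so Equation \eqref{thm1:decomposition} applies to $(G\times H)/(U,\psi|_{\ldots})$ — more precisely the remark after \eqref{thm1:decomposition} only needs $\tilde\phi$ to exist on $P\times Q = G\times H$, and here $\psi$ is that extension. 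Feeding $\psi$ into \eqref{thm1:decomposition} with $P=G$, $Q=H$, $K=k_1(U)$, $L=k_2(U)$ gives
\[
\Big(\frac{G\times H}{U,\phi}\Big)\cong \Ind_G^G\,\tw_G^{\psi_G}\,\Inf_{G/K}^G\,\Isom_{G/K,H/L}^\eta\,\Def_{H/L}^H\,\tw_H^{\phi_H}\,\Res_H^H,
\]
and deleting the trivial $\Ind_G^G$ and $\Res_H^H$ yields the claimed formula, with $\psi_G = \psi|_{G\times 1}$.

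\textbf{Case (ii): $|A|<|H|$.} Here I would start from isomorphism \eqref{eqn:finite1st}, namely $(\frac{G\times H}{U,\phi})\cong (\frac{G\times H}{\Delta_\eta(G),\phi_\eta})\otimes_{AH} E_{(L,\phi_L)}^H$, which is \cite[Proposition 4.2.(b)]{bolcay} applied to the factorization $U = \Delta_\eta(G)\cdot(1\times L)$, $\phi=\phi_\eta\cdot\phi_L$ established in the text (valid because $|\Delta_\eta(G)\cap(1\times L)|=1$ and the product has the right order). For the first tensor factor, I would reproduce the Mackey-product computation in the text: writing $(V,\psi)$ for the stabilizing pair of $\tw_G^{\tilde\phi}\,\Inf_{G/K_\eta}^G\,\Isom_{G/K_\eta,H}^{\tilde\eta}$, the composition formula gives $V = \{(x,y)\in G\times H : xK_\eta = \tilde\eta(y)\} = \Delta_\eta(G)$, and since inflation and isogation carry trivial fiber the resulting homomorphism is $\tilde\phi$; hence $(\frac{G\times H}{\Delta_\eta(G),\phi_\eta}) = \tw_G^{\tilde\phi}\,\Inf_{G/K_\eta}^G\,\Isom_{G/K_\eta,H}^{\tilde\eta}$. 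Substituting into \eqref{eqn:finite1st} gives precisely the stated formula. The only remaining subtlety is the implicit claim that this factorization is genuinely ``over smaller groups'' and does not collapse — but for the bare statement of the theorem one only needs the displayed isomorphism, so I would relegate the reducedness of $E_{(L,\phi_L)}^H$ (which follows from Theorem \ref{thm:redCrit}(ii), since $\phi_L=1\times\phi_2$ is faithful by hypothesis) to a concluding remark rather than making it part of the proof of the equation itself.

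\textbf{Main obstacle.} The genuinely delicate point is already handled in Theorem \ref{thm:redCrit}: ensuring in Case (ii) that the idempotent term $E_{(L,\phi_L)}^H$ cannot be pushed down to a proper subquotient — this is what makes the decomposition ``as small as possible'' and is the reason a new basic element $E$ is unavoidable when $|A|$ is small, in contrast to the abelian large-fiber case of \cite{CY}. Given that theorem, the proof of Theorem \ref{thm:factorization} is essentially bookkeeping: verifying that $\psi$ and $\tilde\phi$ are well-defined homomorphisms extending $\phi$ (routine, using that the relevant groups are cyclic $p$-groups and $A$ is cyclic), that the Goursat data line up, and that the deletions of trivial induction/restriction are legitimate. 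I would therefore keep the written proof short, citing the preceding paragraphs for the constructions of $\psi$, $\psi_G$, $\phi_H$, $\tilde\phi$, $\tilde\eta$, $K_\eta$, and simply stating that Case (i) is \eqref{thm1:decomposition} applied to $\psi$ and Case (ii) is \eqref{eqn:finite1st} combined with the Mackey-product identity for the first factor.
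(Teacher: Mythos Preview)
Your proposal is correct and follows essentially the same route as the paper: Case~(i) is the explicit construction of the extension $\psi$ on $G\times H$ (via the splitting $G\times H=\Delta_\eta(G)\cdot(1\times H)$ and the extension $\phi_H$ of $\phi_2$, available because $|A|\ge|H|$) followed by Equation~\eqref{thm1:decomposition}, and Case~(ii) is \eqref{eqn:finite1st} together with the Mackey-product identification of the left factor, with Theorem~\ref{thm:redCrit}(ii) certifying that $E_{(L,\phi_L)}^H$ is reduced. One small wording issue: in Case~(i) you cannot invoke the hypothesis $|A|_p\ge\max(|G|_p,|H|_p)$ of the remark after~\eqref{thm1:decomposition} directly (indeed $|A|<|G|$ is the ambient setting), but you immediately fix this by noting that what is really needed is the existence of the extension $\psi$, which you have built by hand---this is exactly what the paper does.
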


\section{Simple fibered biset functors with cyclic minimal groups}\label{sec:paramsimple}

A classification of simple fibered biset functors is given in \cite[Section 9]{bolcay}. In this paper we only need
a special case of this result when the minimal group is cyclic. In this section we specialize to this case.

Let $S$ be a simple $A$-fibered biset functor and $G$ be a group such that $S(G)
 \not = 0$ but $S(H) = 0$ for any $H$ with $|H| < |G|$. In this case, we call $G$ a minimal group for $S$. 
 As explained in \cite[Section 4]{bouc}, for any finite group $K$, the evaluation $S(K)$ is either zero or a simple module 
 over the algebra $E_K =  \End_{\mathcal C} (K)$. Moreover, for a minimal group $G$, the module $S(G)$
 is a simple module over the essential algebra $\bar E_G$ which is defined as follows. Let $I_G$ be the ideal 
 generated by all elements in $E_G$ which factor through a group of smaller order. Then $\bar E_G =
 E_G / I_G$. A detailed structure of the essential algebra is described in \cite{bolcay}. 
 
 Conversely given a simple module $V$ over the essential algebra $\bar E_G$, there is a unique simple $A$-
 fibered biset functor $S$ with minimal group $G$ and $S(G) \cong V$. However, for the simple functor $S$, 
 the minimal group $G$ is not always unique up to isomorphism. By \cite[Theorem 9.2]{bolcay}, the minimal 
 groups for a given simple fibered biset functor are parameterized by linkage classes of certain triplets. 

Therefore to get a parametrization of simple fibered biset functors with cyclic minimal groups, we need to 
determine simple modules over the essential algebra and the linkage classes for cyclic groups. The following result recalls the structure of the essential algebra for two particular cases. 
 
 \begin{theorem}\label{thm:essKnown}
Let $R = k$ be a field. Then
 \begin{enumerate}
 \item[(i)] {\rm{(Bouc, \cite{bouc})}} If $|G|$-torsion of $A$ is trivial, then $\bar E_G \cong k\out(G)$.  
 \item[(ii)] {\rm{(Coşkun - Yılmaz, \cite{CY})}} If $G$ is abelian and $A$ has a non-trivial element of order $|G|$, then $\bar E_G \cong k[G^*\ltimes \Aut(G)].$ 
 \end{enumerate}
 \end{theorem}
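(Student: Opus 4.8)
The plan is to obtain both statements by specializing the decomposition theory already recalled: describe a spanning set of $\bar E_G=E_G/I_G$ by decomposing every transitive $A$-fibered $(G,G)$-biset into basic pieces, discard those that visibly factor through a proper subgroup, and then read off the multiplication and the dimension of what survives. Throughout, recall that $E_G=kB^A(G,G)$ and that $I_G$ is spanned by the bisets factoring through groups of smaller order, so only the reduced transitive bisets can contribute a nonzero class to $\bar E_G$.

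For (i), the first step is to note that the hypothesis collapses the fibered theory at $G$ onto the ordinary one: any homomorphism $\phi\colon U\to A$ with $U\le G\times G$ has image annihilated by $|U|$, hence by a power of $|G|$; since $A$ has trivial $|G|$-torsion and every prime dividing $|U|$ divides $|G|$, such $\phi$ is trivial, so $B^A(G,G)=B(G,G)$, the Mackey product reduces to the ordinary one, and $I_G$ coincides with its ordinary counterpart. Hence $\bar E_G$ is exactly the essential algebra of the ordinary biset category at $G$. Bouc's factorization $(\frac{G\times H}{U})=\Ind_{P}^{G}\Inf_{P/K}^{P}\Isom_{P/K,Q/L}^{\eta}\Def_{Q/L}^{Q}\Res_{Q}^{H}$ then shows a transitive biset is reduced only when $P=Q=G$ and $K=L=1$, i.e.\ $U=\Delta_{\alpha}(G)$ for some $\alpha\in\Aut(G)$, and $\Delta_{\alpha}(G)$ is $(G\times G)$-conjugate to $\Delta_{\beta}(G)$ exactly when $\alpha\beta^{-1}\in\Inn(G)$; the classes of the isogations $\Isom^{\alpha}$ for $\alpha\in\Out(G)$ span $\bar E_G$, with $\Isom^{\alpha}\otimes_{AG}\Isom^{\beta}=\Isom^{\alpha\beta}$. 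Linear independence of these classes is Bouc's computation of the essential algebra of the double Burnside ring, which I would simply quote from \cite{bouc}, giving $\bar E_G\cong k\Out(G)$.

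For (ii), the key preliminary is that a nontrivial element of $A$ of order $|G|$ forces $|A|_p\ge|G|_p$ for every prime $p\mid|G|$, so (with $H=G$) the decomposition \eqref{thm1:decomposition} applies to every transitive $A$-fibered $(G,G)$-biset, expressing it as $\Ind_{P}^G\tw_{P}^{\tilde\phi_1}\Inf_{P/K}^{P}\Isom_{P/K,Q/L}^\eta\Def^{Q}_{Q/L}\tw^{\tilde\phi_2}_{Q}\Res^G_{Q}$; again this lies in $I_G$ unless $P=Q=G$ and $K=L=1$, leaving the reduced elements $\tw_G^{\phi}\Isom^{\alpha}$ with $\phi\in G^A$ and $\alpha\in\Aut(G)$ (here $\Out(G)=\Aut(G)$ since $G$ is abelian, so there is no further collapsing from conjugation). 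Under the standing assumptions guaranteeing \eqref{thm1:decomposition}, restriction of characters identifies $G^A$ with $G^*$, so the spanning set is indexed by $G^*\times\Aut(G)$; using the Mackey product formula one checks $\tw_G^{\phi}\otimes_{AG}\tw_G^{\psi}=\tw_G^{\phi\psi}$, $\Isom^{\alpha}\otimes_{AG}\Isom^{\beta}=\Isom^{\alpha\beta}$, and $\Isom^{\alpha}\otimes_{AG}\tw_G^{\phi}=\tw_G^{\phi\circ\alpha^{-1}}\otimes_{AG}\Isom^{\alpha}$, which is precisely the multiplication of $k[G^*\ltimes\Aut(G)]$ with $\Aut(G)$ acting on $G^*$ by precomposition. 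Together with linear independence this yields $\bar E_G\cong k[G^*\ltimes\Aut(G)]$, and I would quote \cite{CY}.

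In both parts the real content, and the main obstacle, is the same: showing the decomposition is \emph{tight}, i.e.\ that no nonzero combination of the reduced classes lies in $I_G$, so that the spanning set is actually a basis. For (i) this is Bouc's analysis of the double Burnside ring via its ghost/mark homomorphism; for (ii) it is the fibered analogue, which one can organize around \cite[Proposition 8.6]{bolcay} (characterizing when $e_{(K,\kappa)}\notin I_G$) together with the orthogonal idempotents $f_{(K,\kappa)}$ of \cite[Proposition 4.4]{bolcay}, or around a dimension count $\dim_k\bar E_G=|G^*|\cdot|\Aut(G)|$ extracted from suitable evaluations. Since both statements are already in the literature, the write-up legitimately reduces to checking that the hypotheses activate the relevant decomposition theorem and then citing \cite{bouc} and \cite{CY} respectively.
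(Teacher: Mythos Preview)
The paper does not give its own proof of this theorem: it is stated with attributions to \cite{bouc} and \cite{CY} and then used as a black box. Your sketch is correct and is precisely the standard route taken in those references (and, incidentally, matches the uncompiled draft material left after \verb|\end{document}| in the source, which argues part (ii) exactly as you do: apply the decomposition~\eqref{thm1:decomposition}, reduce to $\tw_G^{\phi}\Isom^{\alpha}$, and read off the semidirect product relations from the Mackey formula). So there is nothing to compare against beyond the citations, and your outline aligns with them.

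One small remark on part (ii): your identification $G^A=G^*$ uses that $A$ contains a full group of $|G|$-th roots of unity, not merely an element of order $|G|$; this is automatic once $A$ is (as in this paper) a subgroup of $\mathbb{C}^\times$, since a cyclic subgroup of $\mathbb{C}^\times$ of order $|G|$ is unique, but it is worth saying explicitly rather than leaving implicit.
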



\noindent Suppose $G$ is cyclic. By Lemma \ref{lem:direct-product}, for any abelian group $A$, there is an isomorphism of algebras
\[
\bar E_G \cong \prod_{p_i} \bar E_{P_i}.
\]
Here $p_i$ runs over all distinct prime divisors of $|G|$ and for each $i$, we denote by $P_i$ the Sylow 
$p_i$-subgroups of $G$. In particular if $p_i$ does not divide $|A|$, or if the 
$p_i$-part of $|A|$ is greater than that of $|G|$, the structure of $\bar E_{P_i}$ is given by Theorem 
\ref{thm:essKnown}. Thus we need only to discuss the case where $p_i$-part of $|A|$ is non-trivial and less
than that of $|G|$. Hence let $p$ be such a prime and assume that $G$ and $A$ are finite $p$-groups with 
$|A|<|G|$. 

By \cite[Corollary 8.5]{bolcay}, simple modules over the algebra $\bar E_G$ are parameterized by simple 
modules over certain group algebras. To describe this parametrization, we introduce further notation. Let 
$\mathcal R_G$ be the subset of $\mathcal M_G$ consisting of reduced pairs, that is,
\[
\mathcal R_G = \{ (K, \kappa)\in \mathcal M_G : e_{(K, \kappa)}\notin I_G\}.
\]
The triplets $(G, K, \kappa), (G, L, \lambda)$ are said to be \textit{linked}, written $(G, K, \kappa) \sim (G, L, \lambda)$, if
there is an $A$-fibered $(G,G)$-biset $(\frac{G\times G}{U, \phi})$ such that $l(U, \phi) = (G, K, \kappa)$ and $r(U, \phi) = (G, L, \lambda)$.
In this case we say that $(\frac{G\times G}{U, \phi})$ links $(G, K, \kappa)$ to $(G, L, \lambda)$.
This induces an equivalence relation on $\mathcal M_G$. The equivalence class, called the linkage class, containing $(K, \kappa)$ is denoted
by $\{ K,\kappa \}$.  Note that if $(G, K, \kappa) \sim (G, L, \lambda)$, then $|K| = |L|$ as noted in \cite[Definition 5.1]{bolcay}. Since $G$ is 
cyclic, we further have $K = L$. Finally note that if $(G, K, \kappa)\sim (G, K, \lambda)$, then $(K, \kappa)\in \mathcal R_G$ if and only if 
$(K, \lambda)\in \mathcal R_G$, by \cite[Section 8.1]{bolcay}. 

Given $(K, \kappa)\in \mathcal R_G$, we write 
$\Gamma_{(G, K, \kappa)}$ for the set of isomorphism classes of all $A$-fibered $(G,G)$-bisets $(\frac{G\times G}{U, \phi})$ such that 
$l(U, \phi) = (G, K, \kappa) = r(U, \phi)$. By \cite[Section 6.1]{bolcay}, it becomes a group under the tensor 
product, with the identity element $e_{(K, \kappa)}$ and inverses given by opposite fibered bisets. 

With this 
notation, Corollary 8.5 in \cite{bolcay} gives a bijective correspondence between
\begin{enumerate}
\item[(i)] isomorphism classes of simple $\bar E_G$-modules and
\item[(ii)] triples $(K,\kappa, [V])$ where $(K, \kappa)$ runs over $\mathcal R_G$, up to linkage, and
$[V]$ runs over isomorphism classes of simple $k\Gamma_{(G,K,\kappa)}$-modules.
\end{enumerate}
The correspondence is given by $(K, \kappa, [V])\mapsto \bar E_G \bar f_{(K, \kappa)}\otimes_{k\Gamma_{(G, K, \kappa)}}V$ where $ \bar f_{(K, \kappa)}$ is the image of  $f_{(K, \kappa)}$ under the canonical projection onto $ \bar E_G$.

 
It turns out that when $G$ is cyclic, there is only one linkage class of reduced pairs once we fix the subgroup 
$K$. More precisely we prove the following proposition.
\begin{prop}\label{pro:LinkageCyclic}
Let $p$ be a prime number and $G$ and $A$ be finite cyclic $p$-groups with $|A| < |G|$. Given $K\le G$, the 
triplets $(G, K, \kappa)$ and $(G, K, \lambda)$ are linked for any faithful $\kappa, \lambda\in K^A$.
\end{prop}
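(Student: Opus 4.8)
The plan is to construct, for any two faithful characters $\kappa, \lambda\in K^A$, an explicit $A$-fibered $(G,G)$-biset $(\frac{G\times G}{U,\phi})$ that links $(G,K,\kappa)$ to $(G,K,\lambda)$. Since $G$ is cyclic, $K\le G$ is the unique subgroup of its order, and the natural candidate for the biset is a twist: take $U$ with full projections and appropriate kernels so that the underlying biset is essentially an isomorphism $G\to G$, and choose $\phi$ to encode both characters on the two sides. Concretely, I would fix a generator $g$ of $G$, pick an automorphism-type correspondence (or simply the identity) as the Goursat isomorphism $\eta$, and set $U = \Delta(G) = \{(x,x):x\in G\}$ so that $K = k_1(U) = 1$ — but this forces $K=1$, so instead I need $U$ with $k_1(U) = k_2(U) = K$; the right choice is $U = \Delta_K(G)=\{(x,y):xK=yK\}$ or, more flexibly, a subgroup whose left and right invariants are exactly $(G,K,\kappa)$ and $(G,K,\lambda)$ respectively. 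The key point is to choose $\phi:U\to A$ whose restriction to $K\times 1$ is $\kappa$ and whose restriction to $1\times K$ is $\lambda^{-1}$, which is possible precisely because $U$ surjects onto both factors and $A$ is a cyclic $p$-group large enough to receive $\kappa$ and $\lambda$ (both have image of order $\le |A|$ since they are faithful and $|K|\le|A|$ by the standing hypotheses).

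First I would make the choice of $U$ precise. Writing $\Delta_K(G)$ as above, we have $p_1(\Delta_K(G)) = p_2(\Delta_K(G)) = G$ and $k_1(\Delta_K(G)) = k_2(\Delta_K(G)) = K$, with Goursat isomorphism $\eta = \id_{G/K}$. Then I would define $\phi:\Delta_K(G)\to A$ on a generating set: if $(x,y)\in\Delta_K(G)$ then $x = y k$ for a unique $k\in K$ (since $G$ is abelian), so $x y^{-1}\in K$; I want $\phi$ to restrict to $\kappa$ on $K\times 1$ and to $\lambda^{-1}$ on $1\times K$. Since $\Delta_K(G)/(K\times 1)\cdot(1\times K)\cong G/K$ is cyclic, extending a prescribed homomorphism from $(K\times K)\cap\Delta_K(G) = K\times K$... — here I must be careful: $K\times K$ need not lie in $\Delta_K(G)$ unless $K$ acts trivially, which it does since $xK = yK$ allows $(k,1)$ and $(1,k)$ for $k\in K$. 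So $K\times K\le\Delta_K(G)$, and I need a homomorphism on $\Delta_K(G)$ restricting to $\kappa\times\lambda^{-1}$ on $K\times K$. Such an extension exists because $A$ is divisible on its $p$-part up to order $|A|$ and $\Delta_K(G)$ is a finite abelian $p$-group in which $K\times K$ embeds with cyclic quotient $\cong G/K$; more directly, $\Delta_K(G)\cong G\times K$ via $(x,y)\mapsto(x,xy^{-1})$, and under this isomorphism I can define $\phi$ coordinatewise using any faithful extension of $\kappa$ to... no — I should instead just define $\phi(x,y) = \tilde\kappa(x)\cdot\tilde\lambda^{-1}(y)$ where $\tilde\kappa,\tilde\lambda: G\to A$ are chosen so that $\tilde\kappa|_K = \kappa$, $\tilde\lambda|_K=\lambda$; such extensions exist iff $|A|$ is at least the order of $G$, which need not hold. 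So the honest approach: use the isomorphism $\Delta_K(G)\cong G\times K$, $(x,y)\leftrightarrow(x, xy^{-1})$, and set $\phi(x,y) = \mu(x)\cdot\nu(xy^{-1})$ where I only need $\phi|_{K\times 1}=\kappa$ and $\phi|_{1\times K}=\lambda^{-1}$; translating, $(k,1)\leftrightarrow(k,k)$ gives $\phi(k,1)=\mu(k)\nu(k)=\kappa(k)$, and $(1,k)\leftrightarrow(1,k^{-1})$ gives $\phi(1,k)=\mu(1)\nu(k^{-1})=\nu(k)^{-1}=\lambda(k)^{-1}$, so $\nu|_K=\lambda$ and $\mu|_K = \kappa\lambda^{-1}$. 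Now $\nu:K\to A$ is just $\lambda$ itself (defined on $K$, need not extend to $G$), and $\mu = \kappa\lambda^{-1}$ on $K$ likewise needs no extension since the $K$-coordinate already runs over all of $K$. Wait — in $G\times K$ the first coordinate is $G$, so $\mu$ must be defined on all of $G$; but $\mu|_K = \kappa\lambda^{-1}$ is prescribed only on $K$. An extension $\mu:G\to A$ of $\kappa\lambda^{-1}:K\to A$ exists iff the image has order dividing $|A|$ and... again an extension-to-$G$ issue. The clean fix is to take $\mu$ to be the composite $G\twoheadrightarrow G/?\,$ — no. I will instead choose $\mu$ trivial on $K$ by first replacing: since both $\kappa$ and $\lambda$ are faithful on the cyclic $p$-group $K$, there is an automorphism $\theta\in\Aut(K)$ with $\lambda = \kappa\circ\theta$; extend $\theta$ to an automorphism $\Theta\in\Aut(G)$ (possible since $G$ is cyclic and $K$ is characteristic — any automorphism of a subgroup of a cyclic $p$-group extends, as $\Aut$ acts transitively on generators). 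Then take $U = \{(x,\Theta^{-1}(x)) : x\in G\}\cdot(K\times 1) = \{(x,y): x K = \Theta(y)K\}$ — hmm, let me instead just use $U = \Delta_\Theta(G) := \{(x,\Theta(x)):x\in G\}\cdot(1\times K)$...

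Given these subtleties, the cleanest route — and the one I would actually write up — is: take $U = \{(x,y)\in G\times G : xK = yK\} = \Delta_K(G)$, and for the linking biset use the product $\tw_G^{\mu}\cdot E_{(K,?)}^G\cdot\tw_G^{\nu}$ built from a twist, the idempotent $e_{(K,\kappa)}$-type element, and another twist, chosen so that the Mackey product has left invariants $(G,K,\kappa)$ and right invariants $(G,K,\lambda)$; the existence of the relevant twists reduces to: is there $\mu\in G^A$ with $\mu|_K$ equal to a prescribed faithful character of $K$? Since $G$ is cyclic, $G^A$ restricts ONTO $K^A$ precisely when $\exp(G^A)\ge\exp(K)$... which is exactly the kind of condition that may fail. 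The true resolution, which I believe is the intended one, is to observe that $\Gamma_{(G,K,\kappa)}$ contains twist bisets $\tw_G^\mu$ for every $\mu\in G^A$ with $\mu|_K$ trivial, and conjugation/isogation bisets for $\Theta\in\Aut(G)$ fixing $K$ setwise; acting on $(G,K,\kappa)$ by the isogation for $\Theta$ sends it to $(G,K,\kappa\circ(\Theta|_K)^{-1})$, and since the faithful characters of the cyclic group $K$ form a single orbit under $\Aut(K)$, and every element of $\Aut(K)$ is induced by some $\Theta\in\Aut(G)$ (cyclic $p$-groups: restriction $\Aut(G)\to\Aut(K)$ is surjective), we can move $\kappa$ to any other faithful $\lambda$. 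So: let $\Theta\in\Aut(G)$ restrict to $\theta\in\Aut(K)$ with $\kappa\circ\theta = \lambda$; the isogation biset $\Isom_{G,G}^\Theta$ viewed as an $A$-fibered biset with trivial fiber homomorphism has $l$-invariants $(G,1,1)$ and $r$-invariants $(G,1,1)$, so I instead need the biset $E_{(K,\kappa)}^G\cdot\Isom_{G,G}^\Theta$, whose left invariants are $(G,K,\kappa)$ and whose right invariants, computed via the Mackey product formula and \cite[Proposition 2.6]{bolcay}, are $(G,K,\kappa\circ\Theta|_K) = (G,K,\lambda)$.

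The main obstacle I anticipate is the bookkeeping in the Mackey product formula: verifying that $E_{(K,\kappa)}^G\otimes_{AG}\Isom_{G,G}^\Theta$ is a single transitive term (no sum collapse, the compatibility condition $\phi_2|_{G_x} = {}^x\psi_1|_{G_x}$ holds for the relevant double coset) and that its Goursat data give left invariants exactly $(G,K,\kappa)$ and right invariants exactly $(G,K,\lambda)$. This is a routine but delicate computation using the definitions of $E_{(K,\kappa)}^G$ (stabilizing pair $(\Delta_K(G),\phi_\kappa)$) and of $\Isom_{G,G}^\Theta$ (stabilizing pair $(\{(x,\Theta(x))\},1)$), together with the composition rule $U\ast V$ and the formula $(\phi\ast\psi)(g,k)=\phi(g,h)\psi(h,k)$. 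Once that computation is in hand, the definition of linkage is satisfied immediately, and since $(K,\kappa),(K,\lambda)\in\mathcal R_G$ (both faithful, hence reduced by Theorem \ref{thm:redCrit}(ii)), we conclude $(G,K,\kappa)\sim(G,K,\lambda)$, proving the proposition. I would also remark that the surjectivity of the restriction map $\Aut(G)\to\Aut(K)$ for cyclic $p$-groups is elementary (identify $\Aut(\mathbb Z/p^m\mathbb Z)$ with $(\mathbb Z/p^m\mathbb Z)^\times$ and note reduction mod $p^j$ is surjective for $j\le m$).
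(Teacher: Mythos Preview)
Your final approach---taking $\Theta\in\Aut(G)$ restricting to $\theta\in\Aut(K)$ with $\kappa\circ\theta=\lambda$, and using the product $E_{(K,\kappa)}^G\cdot\Isom_{G,G}^\Theta$ as the link---is correct and is essentially the paper's argument. The paper works with $\tw_G^\psi\,\Isom_{G,G}^\alpha\,E_{(K,\kappa)}^G$ instead (invoking the factorization theorem to see this is the general shape of a link), computes its left and right invariants as $(G,K,\psi\cdot{}^{\alpha^{-1}}\kappa)$ and $(G,K,\kappa)$, and then takes $\psi=1$ and chooses $\alpha$ so that ${}^{\alpha^{-1}}\kappa=\lambda$; the existence of such $\alpha$ is justified by exactly your observation that faithful characters of the cyclic group $K$ form a single $\Aut(K)$-orbit and that $\Aut(G)\to\Aut(K)$ is surjective. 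One small correction for your write-up: with the paper's conventions the stabilizing pair of $\Isom_{G,G}^\Theta$ is $(\{(\Theta(x),x):x\in G\},1)$, not $(\{(x,\Theta(x))\},1)$; this does not affect the argument.
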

\begin{proof}
For simplicity, we write $\kappa \sim\lambda$ instead of $(G, K, \kappa)\sim (G, K, \lambda)$. Since $\kappa$
and $\lambda$ are faithful, the pairs $(K, \kappa)$ and $(K, \lambda)$ are both reduced by Theorem \ref{thm:redCrit}. Thus if $\kappa\sim
\lambda$, then  by Theorem \ref{thm:factorization}, any link $(\frac{G\times G}{U, \phi})$ between $\kappa$ and $\lambda$ is of the form
\[
\tw_G^{\psi} \Isom_{G,G}^\alpha E_{(K, \kappa)}^G
\]
for some $\psi\in G^A$ and $\alpha\in \Aut(G)$. In particular, the linkage class of $\kappa$ is determined by the left invariants of products of the above type. To determine these invariants, let $\kappa\in K^A, \psi\in G^A$ and $\alpha\in
\Aut(G)$. We consider the product 
\[
 \tw_G^{\psi} \Isom_{G,G}^\alpha E_{(K, \kappa)}^G = \Big(\frac{G\times G}{\Delta(G),\Delta_\psi} \Big)
\Big(\frac{G\times G}{{}_{\alpha}\Delta(G), 1} \Big) \Big(\frac{G\times G}{\Delta_K(G), \phi_\kappa} \Big)
\]
 where ${}_{\alpha}\Delta(G):=\{(\alpha(g),g)\, | \, g\in G\}$. By the Mackey product formula for fibered 
bisets, the above product is a transitive fibered biset with stabilizing pair $(U, \phi)$ where
\[
U = \Delta(G)\ast {}_{\alpha}\Delta(G)\ast \Delta_K(G) = \{(g_1, g_2): \alpha(g_1^{-1})K = g_2K\}
\]
and $\phi(g_1, g_2) = \psi(g_1)\kappa(g_2^{-1}\alpha(g_1^{-1}))$ for $(g_1, g_2)\in U$. In particular we have 
\[
p_1(U) = G = p_2(U), k_1(U) = K = k_2(U).
\]
Here we note that since $G$ is cyclic, we have $\alpha(K) = K$. Moreover for $g\in K$, we have
\[
\phi(g, 1) = \psi(g)\kappa(\alpha(g^{-1})) = (\psi\cdot {}^{\alpha^{-1}}\kappa)(g)
\,\,\,
\mbox{\rm and}\,\,\,
\phi(1, g) = \psi(1)\kappa(g^{-1}) = \kappa^{-1}(g).
\]
Therefore we obtain
\[
l(U, \phi) = (G, K, \psi\cdot {}^{\alpha^{-1}}\kappa)
\,\,\,
\mbox{\rm and}\,\,\,
r(U, \phi) = (G, K, \kappa).
\]
In particular we see that the pairs $(K, \kappa)$ and $(K, \lambda)$ are linked if and only if 
there exists $\phi\in G^A$ and $\alpha\in \Aut(G)$ such that $\lambda =\phi\cdot{}^{\alpha}\kappa$. 
To see that $\kappa\sim\lambda$ for any faithful $\kappa$ and $\lambda$, note that,
since the characters $\kappa$ and $\lambda$ are faithful, their extensions $\chi_1$ and $\chi_2$ to
$G$ are also faithful. Hence there exists an automorphism $\sigma$ of $G$ such that $\chi_2 = 
{}^\sigma\chi_1$ and hence $\lambda = {}^\sigma\kappa$. 
\end{proof}

Therefore when $G$ and $A$ are finite cyclic $p$-groups with $|A| < |G|$, the set of linkage classes on 
$\mathcal R_G$ is in one-to-one correspondence with the set of subgroups of $G$ of order at most $|A|$. 
Hence in this case, there is a bijective correspondence between 
\begin{enumerate}
\item[(i)] the isomorphism classes of  simple $\bar E_G$-modules
\item[(ii)] the pairs $(K, [V])$ where $K\le G$ runs over subgroups of order at most $|A|$ and $[V]$ runs over isomorphism classes of simple $k\Gamma_{G, K, \kappa}$-modules for a fixed faithful character $\kappa$ of
$K$.
\end{enumerate}

Now let $G$ be a cyclic $p$-group and $A$ be abelian. Given a simple $A$-fibered biset functor $S$ with 
minimal group $G$ and $S(G) = V$. If $|G|$-torsion of $A$ is trivial, then $A$-fibered $(G,G)$-bisets can be 
identified with $(G,G)$-bisets and hence by \cite[Theorem 4.3.10]{bouc}, $G$ is the 
unique minimal group for $S$, up to isomorphism and the only reduced pair for $G$ is $(1, 1)$. Hence we 
have $S = S_{G, V} = S_{G, 1, 1, [V]}^A$.

Otherwise if $A$ has an element of order $|G|$, then by Equation \ref{thm1:decomposition}, $G$ is unique up 
to isomorphism, again there is only one reduced pair for $G$. Thus we have $S = S_{G, 1, 1, [V]}^A$. 

Finally in the remaining case, suppose $S(G)$ corresponds to the triple $(K, \kappa, [V])$. By 
\cite[Proposition 9.7]{bolcay}, any other minimal group $H$ for $S$ must be 
abelian of order $|G|$ and there must exist $(L, \lambda)\in \mathcal M_H$ such that $(G, K, \kappa)$ is linked to $(H, L, \lambda)$. As we shall see at the end of Section \ref{sec:mainR}, for simple subfunctors of 
the functor of complex characters, $H$ must be isomorphic to $G$.

\section{Actions on characters}\label{sec:main}

We denote by Irr$(G)$ the set of irreducible complex characters of the group $G$ and by  
$$R_{\mathbb{C}}(G) = \bigoplus_{\chi\in\mbox{\rm Irr}(G)} \mathbb Z \cdot \chi$$ the ring of complex characters of $G$. As usual, we identify it with the Grothendieck ring of the category of finite dimensional 
$\mathbb C G$-modules, and identify the set Irr$(G)$ by a complete set of isomorphism classes of simple 
$\mathbb C G$-modules.  Given a $\C G$-module $M$, we denote its image in $R_{\mathbb C}(G)$ by 
$\chi_M$. 
The functor sending $G$ to $R_\CC(G)$ is a $\C^\times$-fibered biset functor as described in \cite{bolcay}. In the
 same way it has a structure of an $A$-fibered biset functor for any subgroup $A$ of $\Ctimes$. We recall this 
 structure.

Let $A$ be a subgroup of $\mathbb C^\times$. As in the previous section, let $B^A$ denote the $A$-fibered biset functor of Burnside groups, sending any group $G$ to the Burnside group $B^A(G)$. 
Given a transitive $A$-fibered $G$-set $X = [U,\phi]_G$, we can construct the monomial $\C G$-module $\C X$ with 
monomial basis $X$, that is, the $\C$-vector space  $\mathbb CX$ with basis the $A$-orbits $X/A$ of
$X$ and the $\C G$-action inherited from the $G$-action on $X/A$. It is easy to show that 
\[
\C X\cong \ind_U^G \C_\phi
\]
as $\C G$-modules, where $\C_\phi$ denotes the 1-dimensional representation of $U$ with character $\phi$. 
Then the well-known linearization map 
\[
\lin_G: B^A(G) \to R_{\mathbb C}(G)
\]  
is defined as the linear extension of this correspondence. Similarly, if $Y$ is an $A$-fibered $(H,G)$-biset, then the linearization of $Y$ can be regarded as a monomial $(\C H,\C G)$-bimodule and hence induces a group homomorphism 
\[
R_\mathbb C(Y): R_\mathbb C(G)\to R_\mathbb C(H)
\]
given by $R_\C(Y)(\chi_M) = \chi_{\C Y\otimes_{\C G} M}$. For simplicity, we denote this homomorphism by 
$Y_H^G$. It is shown in \cite[Section 11.4]{bolcay} that with this induced action of fibered bisets, the
functor $R_\mathbb C$ becomes an $A$-fibered biset functor. Note that when $A$ is trivial, the above
definition gives a biset functor structure on the functor $R_C$ which also becomes a Green biset functor. The following theorem summarizes the results describing the biset functor structure of $\crc$. We refer to \cite{TW} for the 
Mackey functor structure of it.
\begin{theorem}
\begin{enumerate}
\item[\rm{(i)}] \rm{(Bouc,  {\rm\cite{bouc}})} The biset functor $\crc$ is semisimple and there is an isomorphism of biset functors
\[
\mathbb C R_\mathbb C \cong \bigoplus_{(m,\zeta)} S_{\mathbb Z/m\mathbb Z, \C_\zeta}
\]
where the sum is over all pairs $(m, \zeta)$ with $m\in \mathbb N$ and $\zeta$ runs over all primitive characters of $(\mathbb Z/m\mathbb Z)^\times$.
\item[\rm{(ii)}] \rm{(Boltje - Co\c{s}kun, \rm\cite{bolcay})} The $\mathbb C^\times$-fibered biset functor $\crc$ is simple and there is an isomorphism of $\mathbb C^\times$-fibered biset functors
\[
\mathbb C R_\mathbb C \cong S_{1,1,1,1}
\]
where the right hand side is the unique simple $\mathbb C^\times$-fibered biset functor with minimal group $1$.
\item[\rm{(iii)}] \rm{(Romero, \rm\cite{romero})} The Green biset functor $\mathbb C R_\mathbb C$ is simple.
\end{enumerate}
\end{theorem}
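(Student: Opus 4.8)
The plan is to isolate the mechanism common to all three assertions and then to invoke the references indicated in the statement. The common thread is the inner product of complex characters: on each $R_\CC(G)$ the form $\langle-,-\rangle_G$ is compatible with the fibered biset operations through the standard adjunctions --- $\Ind$ is adjoint to $\Res$, $\Inf$ to $\Def$, and the transport-of-structure and twist bisets act by isometries --- so that $\CC R_\CC$ is isomorphic to its own dual as an $A$-fibered biset functor for every $A\le\Ctimes$. Self-duality converts each simplicity statement into a statement about the head of $\CC R_\CC$: it then suffices to identify the head and to note that it agrees with the dual of the socle.

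For part (i) one works with the trivial fiber group. By Artin's induction theorem every class function on a finite group $G$ is a $\CC$-linear combination of characters induced from cyclic subgroups; if $G$ is non-cyclic these subgroups are all proper, so $R_\CC(G)$ lies in the image of induction from proper subgroups and the essential quotient $\bar R_\CC(G)$ vanishes --- hence every minimal group of a composition factor of $\CC R_\CC$ is cyclic. For $G=\zm$ the essential algebra is $\bar E_{\zm}\cong\CC(\zm)^\times$ by Theorem \ref{thm:essKnown}(i), and a direct computation identifies $\bar R_\CC(\zm)$ --- that is, $R_\CC(\zm)$ modulo the span of everything factoring through proper sections of $\zm$ --- with the $\CC(\zm)^\times$-module $\bigoplus_\zeta\CC_\zeta$, the sum running over the primitive characters $\zeta$ of $(\zm)^\times$; the associated simple subquotients of $\CC R_\CC$ are precisely the $S_{\zm,\CC_\zeta}$. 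Upgrading ``composition factor'' to ``direct summand'' is then carried out using self-duality together with the vanishing of $\Ext^{1}$ between the various $S_{\zm,\CC_\zeta}$, which \cite{bouc} establishes by analysing the subfunctor lattice (equivalently, by producing enough orthogonal idempotents acting on $\CC R_\CC$).

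For part (ii) one takes $A=\Ctimes$. First, $\lin_G\colon B^{\Ctimes}(G)\to R_\CC(G)$ is surjective: its image is spanned by the induced linear characters $\Ind_U^G\CC_\phi$ with $U\le G$ and $\phi\in U^\ast$, and these exhaust $\CC R_\CC(G)$ again by Artin's theorem, so $\CC R_\CC$ is a quotient of the $\Ctimes$-fibered Burnside functor $\CC B^{\Ctimes}$. Second, every transitive $\Ctimes$-fibered $G$-set $[U,\phi]_G$ equals $\Ind_U^G\,\tw_U^\phi\,\Inf_1^U$ applied to the generator of $B^{\Ctimes}(1)\cong\ZZ$, so $\CC B^{\Ctimes}$, and therefore its quotient $\CC R_\CC$, is generated by its value at the trivial group; as $R_\CC(1)=\CC$ is one-dimensional and self-duality makes $\CC R_\CC$ cogenerated there as well, $\CC R_\CC$ is the unique simple $\Ctimes$-fibered biset functor with minimal group $1$, that is $S_{1,1,1,1}$, which is \cite{bolcay}. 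For part (iii), the unit $1_1$ of the ring $R_\CC(1)=\CC$ is carried by inflation to the unit $1_G$ of every $R_\CC(G)$, so the Green-functor ideal it generates is all of $\CC R_\CC$; conversely, given a nonzero ideal $I$, choose $G$ with a nonzero $x\in I(G)$, restrict $x$ to a cyclic subgroup $C=\langle g\rangle$ with $x(g)\neq0$, multiply inside the ring $R_\CC(C)$ by a suitable linear character to make the coefficient of the trivial character nonzero, and deflate to the trivial group to land on a nonzero scalar multiple of $1_1$ in $I(1)$; hence $I=\CC R_\CC$, and $\CC R_\CC$ is simple as a Green biset functor, as in \cite{romero}.

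Were one to reprove these statements rather than cite them, the only genuine obstacle is the semisimplicity in part (i): listing the composition factors of $\CC R_\CC$ is routine Mackey bookkeeping over cyclic groups, but promoting this to a direct sum decomposition is exactly the assertion that $\Ext^{1}$ between the summands $S_{\zm,\CC_\zeta}$ and $S_{\mathbb Z/m'\mathbb Z,\CC_{\zeta'}}$ vanishes, and this is the technical heart of \cite{bouc}. Parts (ii) and (iii) are short once the surjectivity of linearization and the factorization of transitive fibered bisets recalled in Section \ref{sec:decomposition} are in hand.
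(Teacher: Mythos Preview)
The paper does not prove this theorem at all: it is stated purely as a summary of results from \cite{bouc}, \cite{bolcay}, and \cite{romero}, and the paper immediately moves on to use it as background for its own new decompositions. (Part~(ii) is later \emph{recovered} as the extreme case $\pi=\{\text{all primes}\}$ of Theorem~\ref{thm:main1}, but that is a corollary of the paper's machinery, not an independent proof of the cited statement.) So there is no ``paper's own proof'' to compare your proposal against; your write-up goes well beyond what the paper attempts here.

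That said, your sketches are broadly faithful to the cited sources, with two caveats worth flagging. For part~(i), Bouc's argument in \cite{bouc} does not really proceed via a separate $\Ext^1$ vanishing step: he works directly with the explicit generators $\tilde\nu$ (the extensions of primitive Dirichlet characters, exactly as used later in this paper) and the associated orthogonal idempotents, obtaining the direct-sum splitting constructively rather than by an abstract self-duality-plus-$\Ext$ argument. Your description is not wrong as an alternative viewpoint, but it mischaracterises the actual mechanism in the reference. For part~(ii), the phrase ``self-duality makes $\CC R_\CC$ cogenerated there as well'' is doing real work that you have not justified: generated at $1$ plus self-dual does give a unique simple head and a unique simple socle, but you still need to rule out a nontrivial radical in between, and for that one either invokes semisimplicity of $\CC B^{\Ctimes}$ (which is how \cite{bolcay} proceeds) or gives a direct argument that any nonzero element can be pushed down to a nonzero element at the trivial group via fibered bisets. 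Your sketch of part~(iii) is essentially the argument in \cite{romero} and is fine.
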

Note that Bouc's decomposition can be thought as a decomposition of $1$-fibered biset functors, and hence parts (i) and (ii) of the above theorem cover two extreme cases where the fiber group is the smallest and the 
largest. For the rest of the paper, we want to determine the structure of $\mathbb C R_\mathbb C$ as an 
$A$-fibered biset functor for other subgroups of $\mathbb C^\times$. 
For this aim, we need the following description of action of fibered bisets on
characters. Note that this is a special case of \cite[Lemma 7.1.3]{bouc}.
\begin{lemma}\label{lem:GenAction}
Let $M$ be a $\mathbb CG$-module with character $\chi$ and $Y$ be a transitive $A$-fibered $(H, G)$-biset with stabilizing pair 
$(V, \psi)$. Then for any $h\in H$, the value at $h$ of the character $Y_H^G \chi$ of the $\mathbb CH$-module $\mathbb CY\otimes_{\mathbb CG} M$ is given by
\[
(Y_H^G \chi)(h) = \frac{1}{|V|}\sum_{\substack{x \in H, g \in G \\ (h^x,g) \in V}}\psi(h^x,g)\chi(g).
\]
\end{lemma}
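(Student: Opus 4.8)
The plan is to compute the character of the induced–then–tensored module $\mathbb{C}Y \otimes_{\mathbb{C}G} M$ directly, using the explicit description of $\mathbb{C}Y$ as a monomial module. First I would recall that since $Y$ is transitive with stabilizing pair $(V,\psi) \in \mathcal{M}_{H\times G}(A)$, we have an isomorphism of $(\mathbb{C}H,\mathbb{C}G)$-bimodules $\mathbb{C}Y \cong \mathbb{C}[(H\times G)/A] \cong \ind_V^{H\times G} \mathbb{C}_\psi$, where $\mathbb{C}_\psi$ is the one-dimensional $\mathbb{C}V$-module on which $(h,g)\in V$ acts by $\psi(h,g)$. Then $\mathbb{C}Y\otimes_{\mathbb{C}G} M$, as a $\mathbb{C}H$-module, is obtained from the $(H\times G)$-module $\ind_V^{H\times G}\mathbb{C}_\psi$ by regarding $M$ as a right $\mathbb{C}G$-module (via $g\mapsto g^{-1}$ on the appropriate side, following the convention in the excerpt) and forming the tensor product over $\mathbb{C}G$; concretely this identifies $\mathbb{C}Y\otimes_{\mathbb{C}G}M$ with the "generalized induction" of $M$ along the bimodule, and its character can be read off from a basis of $\mathbb{C}[(H\times G)/A]$.

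Next I would set up the basis computation. Choosing coset representatives for $V\backslash(H\times G)$, an element of $\mathbb{C}Y\otimes_{\mathbb{C}G}M$ is a sum of terms $(\text{coset})\otimes m$, and the action of $h\in H$ permutes the $H$-part of the cosets while the $G$-part gets absorbed into $M$; the diagonal $A$-freeness ensures the module is well-defined and finite-dimensional. The character value $(Y_H^G\chi)(h)$ is then the trace of the $h$-action on this space. A standard Mackey/trace computation for such monomial induced modules expresses this trace as a sum over pairs $(x,g)$ — with $x$ indexing the relevant cosets and $g\in G$ the element that "comes back" — weighted by the scalar $\psi$ picks up plus the contribution $\chi(g)$ coming from the trace on $M$. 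Carrying out the bookkeeping with the convention in the paper, the fixed-point condition becomes $(h^x, g)\in V$, the scalar weight is $\psi(h^x,g)$, and normalizing the over-counting (one counts over all of $H$ rather than coset representatives, hence the factor $1/|V|$ accounting for the stabilizer) yields exactly
\[
(Y_H^G\chi)(h) = \frac{1}{|V|}\sum_{\substack{x\in H,\, g\in G\\ (h^x,g)\in V}} \psi(h^x,g)\,\chi(g),
\]
as claimed. Since the statement is asserted to be a special case of \cite[Lemma 7.1.3]{bouc}, an alternative (and shorter) route is to invoke that lemma: Bouc's formula computes the character of $\mathbb{C}Y\otimes_{\mathbb{C}G}M$ for an arbitrary monomial $(\mathbb{C}H,\mathbb{C}G)$-bimodule $\mathbb{C}Y$, and one simply substitutes the transitive bimodule with stabilizing pair $(V,\psi)$, noting that the monomial structure on the basis orbit is precisely recorded by $\psi$.

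The main obstacle I anticipate is purely bookkeeping with conventions: the excerpt uses a specific left/right action convention for $A$-fibered bisets (with the $A$-action and the $h^{-1}, g^{-1}$ twists in the amalgamation), and one must track carefully how these interact with the identification $M \mapsto$ right $\mathbb{C}G$-module and with conjugation $h^x = x^{-1}hx$ versus $xhx^{-1}$, so that the summation condition $(h^x,g)\in V$ and the argument of $\psi$ come out in exactly the stated form. Getting the normalization factor $1/|V|$ right — i.e., checking that summing over all $x\in H$ and $g\in G$ rather than over coset representatives overcounts each genuine contribution by exactly $|V|$ — is the one place where care is needed; everything else is a routine trace computation for induced monomial modules, or a direct citation of \cite[Lemma 7.1.3]{bouc}.
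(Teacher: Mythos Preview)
Your proposal is correct in outline, and the ``alternative (and shorter) route'' you mention at the end is in fact exactly what the paper does: it starts from Bouc's formula $(Y_H^G\chi)(h)=\frac{1}{|G|}\sum_{g\in G}\theta(h,g)\chi(g)$, then computes the bimodule character $\theta$ explicitly via the basis $[Y/A]$ of $\mathbb{C}Y$, using transitivity to index the basis by cosets $(a,b)V\in (H\times G)/V$, and finally simplifies to the stated sum. Your primary route---computing the trace of $h$ on $\mathbb{C}Y\otimes_{\mathbb{C}G}M$ directly from a coset basis without passing through $\theta$---would also work and is essentially equivalent, but the paper's two-step version (Bouc's lemma, then evaluate $\theta$) keeps the bookkeeping cleaner because the fixed-point condition and the scalar $\psi$ appear already isolated in the formula for $\theta$ before one ever touches $M$.
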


\begin{proof} We apply Bouc's formula \cite[Lemma 7.1.3]{bouc} for characters of tensor product of bimodules  to obtain
\[
(Y_H^G\chi)(h) = \frac{1}{|G|}\sum_{g \in G}\theta(h,g)\chi(g)
\]
where $\theta$ is the character of the monomial $(\C H, \C G)$-bimodule $\C Y$. By definition it is the function 
sending any $(h,g) \in H \times G$ to the trace of the endomorphism $y \mapsto (h,g)y$ of $\C Y$. Since $Y$ is an $A$-fibered
$(H,G)$-biset and $A \le \Ctimes$, any set $[Y/A]$ of representatives of $A$-orbits $Y/A$ of $Y$ is a basis of 
$\C Y$. Precisely
\[
\theta(h,g)=\sum_{y \in [Y/A] \atop (h,g)[y]=[y]}\psi_{y}(h,g)
\]
where $\psi_{y}(h,g) \in A$ is determined by the equation $(h,g)y=\psi_{y}(h,g)y$ and we write $[y]$ for the $A$-orbit containing $y$. On the other hand, since $Y$ is transitive, for any $[y],[y']\in Y/A$, there exists some 
$(a,b)V \in (H \times G) /V$ such that $(a,b)[y]=[y']$. Also, if $(h,g)$ stabilizes $[y]$, then $(h,g)^{(a,b)}$ stabilizes 
$[y']$. Thus,
\begin{eqnarray*}
(Y_H^G \chi)(h) &=& \frac{1}{|G|}\sum_{g \in G}\theta(h,g)\chi(g) 
= \frac{1}{|G|}\sum_{g \in G}\sum_{(a,b)V \in (H \times G) /V \atop (h^a, g^b) \in V}\psi(h^a,g^b)\chi(g).
\end{eqnarray*}
Replacing $g$ with ${}^bg$ we get
\begin{eqnarray*}
Y_H^G (\chi)(h) &=& \frac{1}{|G|}\sum_{^bg \in G}\sum_{(a,b)V \in (H \times G) /V \atop (h^a, g) \in V}\psi(h^a,g)\chi(g).
\end{eqnarray*}
If $(a,b)V \in (H \times G) /V$, then for each $(u,v) \in V$ and for any $[y] \in Y/A$,
\[
(h^{au},g^v)[y]=(h^a,g)^v[y]=(h^a,g)[y].
\]
Hence
\begin{eqnarray*}
(Y_H^G \chi)(h) &=& \frac{1}{|G| |V|}\sum_{^bg \in G}\sum_{(a,b) \in (H \times G) \atop (h^a, g) \in V}\psi(h^a,g)\chi(g) 
= \frac{1}{|G| |V|}\sum_{g \in G}\sum_{(a,b) \in H \times G \atop (h^a, g) \in V}\psi(h^a,g)\chi(g) \\
&=& \frac{1}{|G| |V|}\sum_{g \in G}\sum_{a \in H}\sum_{b \in G : \atop (h^a,g) \in V}\psi(h^a,g)\chi(g) 
= \frac{1}{|V|}\sum_{a \in H, g \in G \atop (h^a,g) \in V}\psi(h^a,g)\chi(g).
\end{eqnarray*}
This completes the proof of the lemma.
\end{proof}

We also need explicit descriptions of actions of certain fibered bisets. Note that if $Y$ is one of induction, 
restriction, inflation, deflation or isogation bisets, then the above formula becomes the corresponding well-known 
map from character theory. We also have the following maps.

\begin{coro}\label{cor:actionTw}
Let $\chi \in R_\mathbb C(G)$. Then
\begin{enumerate}
\item[(i)] For any $\phi\in G^A$, we have
\[
\tw_G^\phi \chi = \phi\cdot\chi.
\]
\item[(ii)] Suppose $G$ is abelian and let $(K, \kappa)\in \mathcal M_G$. Then 
\[
E_{(K, \kappa)}^G \chi = \sum_{\phi\in\Irr(G): \phi |_K = \kappa} <\chi, \phi> \phi.
\]
In other words, the action of $E_{(K, \kappa)}^G$ on a $\mathbb CG$-module $M$ is given by projection onto the 
submodule of $M$ generated by all extensions of $\kappa$ to $G$.
\end{enumerate}
\end{coro}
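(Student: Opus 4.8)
The plan is to obtain both formulas directly from Lemma \ref{lem:GenAction} by substituting the stabilizing pairs of the two bisets and simplifying. For part (i), recall that $\tw_G^\phi$ is transitive with stabilizing pair $(\Delta(G),\Delta_\phi)$, where $|\Delta(G)|=|G|$ and $\Delta_\phi(g,g)=\phi(g)$. Lemma \ref{lem:GenAction} then gives
\[
(\tw_G^\phi\chi)(h)=\frac{1}{|G|}\sum_{\substack{x\in G,\ g\in G\\ (h^x,g)\in\Delta(G)}}\Delta_\phi(h^x,g)\,\chi(g).
\]
The condition $(h^x,g)\in\Delta(G)$ forces $g=h^x$, so the indexing set is just $\{x\in G\}$; for such a term $\Delta_\phi(h^x,h^x)=\phi(h^x)=\phi(h)$ because $A$ is abelian and $\phi$ a homomorphism, and $\chi(h^x)=\chi(h)$ since $\chi$ is a class function. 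Hence the sum collapses to $\frac{1}{|G|}\cdot|G|\cdot\phi(h)\chi(h)=(\phi\cdot\chi)(h)$, which proves (i).

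For part (ii), the biset $E_{(K,\kappa)}^G$ is transitive with stabilizing pair $(\Delta_K(G),\phi_\kappa)$, where $\Delta_K(G)=\{(g,h):gK=hK\}$ has order $|G|\,|K|$ and $\phi_\kappa(g,h)=\kappa(gh^{-1})$. Applying Lemma \ref{lem:GenAction} again,
\[
(E_{(K,\kappa)}^G\chi)(h)=\frac{1}{|G|\,|K|}\sum_{\substack{x\in G,\ g\in G\\ (h^x,g)\in\Delta_K(G)}}\phi_\kappa(h^x,g)\,\chi(g).
\]
Since $G$ is abelian, $h^x=h$, so the condition becomes $g\in hK$ and is independent of $x$; summing over the $|G|$ values of $x$ and writing $g=hk$ with $k\in K$ (using $\phi_\kappa(h,hk)=\kappa(hk^{-1}h^{-1})=\kappa(k^{-1})$) yields
\[
(E_{(K,\kappa)}^G\chi)(h)=\frac{1}{|K|}\sum_{k\in K}\kappa(k)^{-1}\chi(hk).
\]

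To finish, I would expand $\chi=\sum_{\psi\in\Irr(G)}\langle\chi,\psi\rangle\,\psi$, every irreducible character of the abelian group $G$ being linear, and substitute into the last display. Using $\psi(hk)=\psi(h)\psi(k)$, the $k$-sum factors out as $\frac{1}{|K|}\sum_{k\in K}\overline{\kappa(k)}\,\psi(k)=\langle\psi|_K,\kappa\rangle_K$, which by orthogonality of the irreducible characters of $K$ equals $1$ when $\psi|_K=\kappa$ and $0$ otherwise. Therefore $(E_{(K,\kappa)}^G\chi)(h)=\sum_{\psi\in\Irr(G):\,\psi|_K=\kappa}\langle\chi,\psi\rangle\,\psi(h)$, and since the $\psi$ with $\psi|_K=\kappa$ are precisely the extensions of $\kappa$ to $G$, this exhibits $E_{(K,\kappa)}^G$ as the projection of $M$ onto the sum of its isotypic components over those extensions.

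There is no deep obstacle here; the whole argument is bookkeeping inside Lemma \ref{lem:GenAction}. The points requiring care are matching the normalizing orders $|\Delta(G)|$ and $|\Delta_K(G)|$ against the number of pairs $(x,g)$ that actually contribute, and simplifying $\phi_\kappa$ correctly under the abelianness hypothesis; once the sum over $K$ is isolated, the standard orthogonality relations close the argument.
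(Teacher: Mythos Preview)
Your proof is correct and follows essentially the same route as the paper's: both parts are direct specializations of Lemma \ref{lem:GenAction} to the stabilizing pairs $(\Delta(G),\Delta_\phi)$ and $(\Delta_K(G),\phi_\kappa)$, followed by the same simplifications using that $\chi$ is a class function, that $G$ is abelian in part (ii), and orthogonality on $K$. The only cosmetic difference is that the paper reduces (ii) to the case of an irreducible $\chi$ at the outset, whereas you expand a general $\chi$ into irreducibles at the end; the computations are otherwise line-for-line the same.
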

\begin{proof}
For the first part, recall that the stabilizing pair of $\tw_G^\phi$ is $(\Delta(G), \Delta_\phi)$. Thus by Lemma \ref{lem:GenAction}, for any
$g\in G$, we get
\begin{eqnarray*}
\tw_G^\phi(\chi)(g) &=& \frac{1}{|\Delta(G)|}\sum_{x \in G, g' \in G \atop (g^x,g') \in \Delta(G)}\Delta_\phi((g^x,g'))\chi(g') 
= \frac{1}{|G|}\sum_{x \in G}\Delta_\phi((g^x,g^x))\chi(g^x) \\
&=& \frac{1}{|G|}\sum_{x \in G}\phi(g)\chi(g) =\phi(g)\chi(g) = (\phi\chi)(g).
\end{eqnarray*}
For the second part, suppose $G$ is abelian. The stabilizing pair of $E_{(K,\kappa)}^G$ is $(\Delta_K(G), \phi_\kappa)$ where 
$\phi_\kappa(g_1, g_2) = \kappa(g_1{g_2}^{-1})$. To simplify calculations, 
we suppose $\chi$ is an irreducible character. The general case follows from the linearity of the action.

Once more, by Lemma \ref{lem:GenAction}, for any $h\in G$, we have
\begin{eqnarray*}
 (E_{(K, \kappa)}^G\chi)(h) &=& \frac{1}{|G| |K|}\sum_{\substack{x \in G, g \in G \\ (h^x,g) \in \Delta_K(G)}}\phi_\kappa(h^x,g)\chi(g)
=\frac{1}{|K|}\sum_{\substack{g \in G: (h,g) \in \Delta_K(G)}}\kappa(hg^{-1})\chi(g)\\
& =& \frac{1}{|K|}\sum_{\substack{k \in K }}\kappa(k^{-1})\chi(hk) 
= \chi(h)\Big(\frac{1}{|K|}\sum_{\substack{k \in K }}\kappa(k^{-1})\chi(k)\Big).
\end{eqnarray*}
Here note that the last equality holds since $G$ is abelian and hence $\chi$ is a homomorphism. Notice that the second term in the last
row above is the inner product of the characters $\kappa$ and $\chi |_K$ in the space of class functions on $K$, and hence is equal to 1
if $\kappa = \chi |_K$ and zero otherwise. Hence the result follows. 
\end{proof}

We also need action of the idempotent $f_{(K,\kappa)}^G$ when $G$ is cyclic. In this case, the description of the idempotent is simpler.
\begin{coro}\label{lem:actionIpots}
Let $p$ be a prime and $G$ and $A$ be finite cyclic $p$-groups with $|A| < |G|$. Also let $(K,\kappa)\in \mathcal R_G$. Then
\begin{enumerate}
\item[(i)] If $|K| = |A|$, then $$f_{(K, \kappa)}^G = e_{(K, \kappa)}^G.$$
In particular, $f_{(K, \kappa)}^G$ annihilates all irreducible characters of $G$ whose restriction to $K$ does not coincide with $\kappa$.
\item[(ii)] If $|K| < |A|$, let $L\le G$ be the unique subgroup of $G$ of order $p\cdot |K|$. Then 
\[
f_{(K, \kappa)}^G = e_{(K, \kappa)}^G - \sum_{\lambda\in L^A: \lambda |_K = \kappa} e_{(L, \lambda)}^G.
\] 
Moreover, in this case, $f_{(K, \kappa)}^G$ annihilates all characters of $G$. 
\end{enumerate}
\end{coro}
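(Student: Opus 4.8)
The plan is to first reduce the description of $f_{(K,\kappa)}^G$ to a very short sum by exploiting that the poset of normal subgroups of the cyclic $p$-group $G$ is a chain. For a chain the Möbius coefficient $\mu^{\triangleleft}_{K,L}$ equals $1$ if $L=K$, equals $-1$ if $|L|=p|K|$, and is $0$ otherwise; hence in $f_{(K,\kappa)}^G=\sum_{(K,\kappa)\le(L,\lambda)\in\mathcal M_G^G}\mu^{\triangleleft}_{K,L}\,e_{(L,\lambda)}^G$ only $e_{(K,\kappa)}^G$ together with the terms $-e_{(L,\lambda)}^G$, where $L$ is the unique subgroup with $|L|=p|K|$ and $\lambda\in L^A$ satisfies $\lambda|_K=\kappa$, can survive. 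So everything reduces to deciding which $\lambda\in L^A$ extend $\kappa$. Note that, since $(K,\kappa)\in\mathcal R_G$ and $|A|<|G|$, Theorem \ref{thm:redCrit}(ii) tells us $\kappa$ is faithful; this is used repeatedly below, together with the fact that $p\mid|A|$ (so $|A|\ge p$).

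For part (i), since $|K|=|A|<|G|$ the subgroup $L$ of order $p|K|=p|A|$ exists, and I would show that no $\lambda\in L^A$ restricts to $\kappa$: because $\lambda|_K=\kappa$ is injective we have $\ker\lambda\cap K=1$, and since $L$ is cyclic its subgroup lattice is a chain, which (as $K\ne1$) forces $\ker\lambda=1$; but then $\lambda$ embeds a group of order $p|A|$ into $A$, which is absurd. Hence the correcting sum is empty and $f_{(K,\kappa)}^G=e_{(K,\kappa)}^G$. The annihilation claim is then immediate from Corollary \ref{cor:actionTw}(ii), since $G$ is abelian: for $\phi\in\Irr(G)$ with $\phi|_K\ne\kappa$, every inner product occurring in $E_{(K,\kappa)}^G\phi=\sum_{\psi\in\Irr(G):\,\psi|_K=\kappa}\langle\phi,\psi\rangle\psi$ vanishes.

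For part (ii) we have $|L|=p|K|\le|A|$, so $\kappa$ does extend to some $\lambda\in L^A$ (in fact to exactly $p$ of them), which yields the stated formula for $f_{(K,\kappa)}^G$. To prove it annihilates every character $\chi$, I would apply Corollary \ref{cor:actionTw}(ii) to both terms and compare the index sets of the resulting sums. The decisive point is that any $\phi\in\Irr(G)$ with $\phi|_K=\kappa$ is necessarily faithful on $G$: writing $G=\langle g\rangle$ of order $p^m$ and $K=\langle g^{p^{m-j}}\rangle$, faithfulness of $\kappa$ forces $\phi(g)$ to be a primitive $p^m$-th root of unity. Consequently $\phi|_L$ is a faithful character of $L$, and because $|L|\le|A|$ its values lie in $A$, so $\phi|_L$ is one of the $\lambda$'s; conversely $\phi|_L=\lambda$ with $\lambda|_K=\kappa$ forces $\phi|_K=\kappa$. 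Hence the $\phi$ appearing across the terms $E_{(L,\lambda)}^G$ are, with multiplicity one, exactly those with $\phi|_K=\kappa$, so $\sum_{\lambda\in L^A:\,\lambda|_K=\kappa}E_{(L,\lambda)}^G\chi=\sum_{\phi\in\Irr(G):\,\phi|_K=\kappa}\langle\chi,\phi\rangle\phi=E_{(K,\kappa)}^G\chi$, and subtracting gives $f_{(K,\kappa)}^G\chi=0$. The main obstacle is precisely this matching of index sets: one must be sure that \emph{every} irreducible character of $G$ restricting to $\kappa$ on $K$ genuinely restricts to an $A$-valued character on $L$ (and thus contributes to some $E_{(L,\lambda)}^G$), which is exactly where faithfulness of $\kappa$ and the inequality $|L|\le|A|$ are both needed; the rest is bookkeeping with the Möbius function of a chain.
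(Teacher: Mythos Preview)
Your argument is correct and follows the same route as the paper: compute the M\"obius function on the chain of subgroups of the cyclic $p$-group $G$ to reduce $f_{(K,\kappa)}^G$ to at most two terms, decide whether $\kappa$ extends to $L$ by comparing $|L|$ with $|A|$, and then read off the annihilation statements from Corollary~\ref{cor:actionTw}(ii). One small simplification for part~(ii): your ``decisive point'' that every $\phi\in\Irr(G)$ with $\phi|_K=\kappa$ is faithful is unnecessary (and in fact fails when $K=1$); the only thing you need is that $\phi|_L$ lands in $A$, and this is automatic because the values of $\phi|_L$ are $|L|$-th roots of unity and $|L|\le|A|$.
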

\begin{proof}
For both parts, the second claims follows form the above corollary once we prove the first parts. Thus we only prove the first claims.
Recall that, by definition, for any pair $(K, \kappa)\in\mathcal M_G$, we have
\[
f_{(K,\kappa)}^G = \sum_{(K, \kappa)\le (L, \lambda)\in \mathcal M_G} \mu^\triangleleft_{K, L}e_{(L, \lambda)}^G.
\]
Since $G$ is cyclic, the Mobius function $\mu^\triangleleft_{K, L}$ coincides with the number theoretic Mobius function $\mu(|L:K|)$ and hence by \cite[Proposition 10.1.10]{Co} we have
\[
\mu^\triangleleft_{K, L} =   \left\{
\begin{array}{ll}
      1 & \mbox{\rm if}\,\,\, K = L, \\
      -1 & \mathrm{if}\,\,\, |L:K| = p, \\
      0 & \mathrm{otherwise.}\\
\end{array} 
\right. \]
Now given $(K, \kappa)\in \mathcal R_G$, there is a unique subgroup $L\le G$ such that $|L:K| = p$. Also if $(K, \kappa) \le (L, \lambda)$,
then $\lambda$ is an extension of $\kappa$. Since $\kappa$ is faithful, an extension exists if and only if 
$|L| \le |A|$.
\end{proof}

Finally we discuss actions of isogations and twists on primitive characters. Again, by Lemma \ref{lem:actionCoprime}, 
it is sufficient to consider the case of $p$-groups.
Let $p$ be a prime, $A$ be cyclic of order $p^l$ and $m = p^k$ for some $k\in \mathbb N$ with $l<k$. We write $G = \mathbb Z/m\mathbb Z$ and identify $A$ with the unique subgroup of $G$ of order $p^l$. Also let $\nu$ be
a primitive character modulo $m$ and write $\tilde\nu$ for the canonical extension of $\nu$ to $G$. 

Recall that a primitive character $\nu$ modulo $m$ is a multiplicative homomorphism $\nu : (\zm)^\times \to \Ctimes$ that is not induced from any character of smaller modulus. With this notation, $\tilde\nu : \zm \to \C$ is given by $\tilde\nu(x)=\nu(x)$ if $ x\in (\zm)^\times$ and zero otherwise.

The set $G^* = \Irr(G)$ is also cyclic, isomorphic to $G$. We write $\chi$ for the generator of $G^*$
for which $\chi(1) = e^{\frac{2\pi i}{m}}$. With this choice, we get
\[
G^* =\{1, \chi, \chi^2,\ldots,\chi^{m-1}\}
\]
and $\chi^j(1) = \chi(j)$ for any $j = 0, 1, \ldots, m-1$. Hence by \cite[Corollary 2.1.42]{Co}, the generalized character $\tilde\nu$ can be expressed as 
\[
\tilde\nu = \tau(\tilde\nu)\sum_{(j, p) = 1}\overline{\tilde\nu(j)}\chi^j
\]
where the sum is over all indexes $j = 0, 1,\ldots, m-1$ such that $(j, p) = 1$ and where $$\tau(\tilde\nu) = 
<\tilde\nu, \chi>$$ is the coefficient of the irreducible character $\chi$ in the generalized character 
$\tilde\nu$. 

Recall that by \cite[Section 4.3]{bolcay}, the idempotents $f_{\{K,\kappa\}}$ sum up to the identity
endomorphism of $G$ as $(K, \kappa)$ runs over the linkage classes. Furthermore, by Corollary \ref{lem:actionIpots}, 
the idempotents $f_{\{K,\kappa\}}$ annihilates any character of $G$ if $(K, \kappa)\in \mathcal R_G$ and $K < A$. Moreover, if $(K, \kappa)$ is not reduced, then each summand of $f_{(K, \kappa)}$ is also non-reduced, and by 
Equation \ref{eqn:non-reduced}, it annihilates all primitive characters. Finally, by Proposition \ref{pro:LinkageCyclic}, all faithful $\alpha\in A^*$
are linked. Therefore we get
\[
\tilde\nu = 1_G\cdot \tilde\nu = f_{\{A,\alpha\}}\tilde\nu = \sum_{\alpha\in A^*: \alpha\, \mbox{\rm\scriptsize faithful}} f_{(A, \alpha)}\tilde\nu.
\]
For simplicity, we put $\tilde\nu_\alpha = f_{(A,\alpha)}\tilde\nu$ and get
\[
\tilde\nu = \sum_{\alpha\in A^*: \alpha\, \mbox{\rm\scriptsize faithful}}\tilde\nu_\alpha.
\]
Notice that by Corollary \ref{lem:actionIpots}, the summand $\tilde\nu_\alpha$ is the projection of $\tilde\nu$
to the summand whose restriction to $A$ is a multiple of $\alpha$, that is,
\[
\tilde\nu_\alpha = \tau(\tilde\nu)\sum_{j: \chi^j|_A = \alpha} \overline{\tilde\nu(j)}\chi^j
\]
where the sum is over all distinct extensions of $\alpha$ to $G$. This set can be described more explicitly. Let $j_\alpha$ be the smallest natural number such that $\chi^{j_\alpha}$ extends $\alpha$. Then any other
extension is a product of $\chi^{j_\alpha}$ and an irreducible character of $G$ inflated from the quotient group
$G/A$. In particular, if $\chi^j$ is another extension of $\alpha$, then $j\cong j_\alpha$ (mod $|A|$). Hence we have
\[
\{\chi^j \in G^*: \chi^j|_A = \alpha \} = \{\chi^{j_\alpha + k\cdot|A|}: k = 0, 1, \ldots, |G:A| -1\}.
\]
Therefore we have
\begin{equation}\label{eqn:nuAlpha}
\tilde\nu_\alpha = \tau(\tilde\nu)\sum_{k=0}^{|G:A|-1} \overline{\tilde\nu(j_\alpha + k|A|)}\chi^{j_\alpha + k|A|}
= \chi^{j_\alpha}\cdot\big(\tau(\tilde\nu)\sum_{k=0}^{|G:A|-1} \overline{\tilde\nu(j_\alpha + k|A|)}\chi^{k|A|}\big).
\end{equation}

Now we determine the actions of fibered bisets on these summands. First, for any $\sigma\in \Aut(G)$, the 
action of the isogation biset $\Isom_G^\sigma$ on $\tilde\nu$ follows from \cite[Theorem 7.3.4]{bouc}. We 
identify the groups $G^\times = \Aut(G)$ via $\sigma \leftrightarrow \sigma(1)$.
\begin{coro}[Bouc]\label{cor:actionIso}
With the above notation, given $\sigma\in\Aut(G)$, we have
\[
\Isom_G^\sigma\tilde\nu_\alpha = \overline{\tilde\nu(\sigma(1))}\tilde\nu_{{}^\sigma\alpha}
\]
where ${}^\sigma\alpha (x) = \alpha(\sigma(x))$ for any $x\in A$.
\end{coro}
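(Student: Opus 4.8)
The plan is to deduce this corollary directly from Bouc's description of the action of isogation (transport-of-structure) bisets on primitive characters, combined with the explicit formula \eqref{eqn:nuAlpha} for $\tilde\nu_\alpha$ as a projection of $\tilde\nu$. First I would recall from \cite[Theorem 7.3.4]{bouc} (or equivalently from the formula in Lemma \ref{lem:GenAction} applied to the isogation biset $\Isom_G^\sigma = {}_GG_G$ with stabilizing pair $({}_\sigma\Delta(G),1)$) that $\Isom_G^\sigma$ acts on $R_\CC(G)$ by precomposition with $\sigma$ on characters; on the generator $\chi$ of $G^*$ this gives $\Isom_G^\sigma \chi^j = ({}^\sigma\chi)^j$, and one computes ${}^\sigma\chi = \overline{\tilde\nu(\sigma(1))}^{-1}$-type scalars appear only after reindexing. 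The cleanest route is: apply $\Isom_G^\sigma$ termwise to the sum $\tilde\nu_\alpha = \tau(\tilde\nu)\sum_{j:\,\chi^j|_A = \alpha}\overline{\tilde\nu(j)}\chi^j$, use linearity, and identify the resulting sum as a scalar multiple of $\tilde\nu_{{}^\sigma\alpha}$.

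The key steps, in order, are as follows. (1) Establish that $\Isom_G^\sigma \chi^j = \chi^{\sigma^{-1}(j)}$ where I write the $G^*$-indexing multiplicatively: since $\chi^j(x) = \chi(jx)$ and transport of structure precomposes with $\sigma$, we get $(\Isom_G^\sigma\chi^j)(x) = \chi^j(\sigma(x)) = \chi(j\sigma(1)x)$, i.e. the character indexed by $j\sigma(1)$. So $\Isom_G^\sigma\chi^j = \chi^{j\sigma(1)}$. (2) Apply this to \eqref{eqn:nuAlpha}: $\Isom_G^\sigma\tilde\nu_\alpha = \tau(\tilde\nu)\sum_{j:\,\chi^j|_A=\alpha}\overline{\tilde\nu(j)}\,\chi^{j\sigma(1)}$. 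Reindex by $j' = j\sigma(1)$; since $\sigma(1)$ is a unit mod $m$, as $j$ ranges over indices with $\chi^j|_A = \alpha$, $j'$ ranges over indices with $\chi^{j'}|_A = {}^\sigma\alpha$ (because $\chi^{j'}|_A(x) = \chi(j\sigma(1)x) = \chi^j|_A(\sigma(1)x) = \alpha(\sigma(1)x) = ({}^{\sigma}\alpha)(x)$ after identifying $A$ as the subgroup fixed pointwise appropriately — here one uses that $\sigma$ restricts to an automorphism of $A$). (3) In the reindexed sum the coefficient of $\chi^{j'}$ is $\tau(\tilde\nu)\overline{\tilde\nu(j'\sigma(1)^{-1})}$. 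Using multiplicativity of $\tilde\nu$ on units, $\overline{\tilde\nu(j'\sigma(1)^{-1})} = \overline{\tilde\nu(j')}\cdot\overline{\tilde\nu(\sigma(1)^{-1})} = \overline{\tilde\nu(j')}\cdot\overline{\tilde\nu(\sigma(1))}^{-1}$; but one checks $|\tilde\nu(\sigma(1))| = 1$ so $\overline{\tilde\nu(\sigma(1))}^{-1} = \tilde\nu(\sigma(1))$ — wait, more carefully, $\overline{\tilde\nu(\sigma(1)^{-1})} = \overline{\overline{\tilde\nu(\sigma(1))}} = \tilde\nu(\sigma(1))$ since $\tilde\nu$ of a unit has modulus $1$ and $\tilde\nu(u^{-1}) = \tilde\nu(u)^{-1} = \overline{\tilde\nu(u)}$. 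Hmm, that gives a factor $\tilde\nu(\sigma(1))$, not $\overline{\tilde\nu(\sigma(1))}$; I would double-check the direction of the isogation convention (left versus right action, $\sigma$ versus $\sigma^{-1}$) against \cite[Theorem 7.3.4]{bouc} to land on the stated $\overline{\tilde\nu(\sigma(1))}$. Factoring this constant out: $\Isom_G^\sigma\tilde\nu_\alpha = \overline{\tilde\nu(\sigma(1))}\cdot\tau(\tilde\nu)\sum_{j':\,\chi^{j'}|_A = {}^\sigma\alpha}\overline{\tilde\nu(j')}\chi^{j'} = \overline{\tilde\nu(\sigma(1))}\,\tilde\nu_{{}^\sigma\alpha}$, which is exactly the claim.

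The main obstacle I anticipate is purely bookkeeping: pinning down the precise convention for how $\Isom_G^\sigma$ acts (whether it precomposes with $\sigma$ or $\sigma^{-1}$, and whether the identification $\Aut(G)\cong G^\times$ via $\sigma\leftrightarrow\sigma(1)$ introduces an inversion) so that the scalar comes out as $\overline{\tilde\nu(\sigma(1))}$ rather than $\tilde\nu(\sigma(1))$ or $\overline{\tilde\nu(\sigma(1)^{-1})}$. This is already settled by \cite[Theorem 7.3.4]{bouc}, which computes $\Isom_G^\sigma\tilde\nu = \overline{\tilde\nu(\sigma(1))}\tilde\nu$ for the primitive character itself; so in practice I would simply cite that formula, observe that $\tilde\nu_\alpha$ is the $f_{(A,\alpha)}$-projection of $\tilde\nu$, and note that $\Isom_G^\sigma$ conjugates the idempotent $f_{(A,\alpha)}$ to $f_{(A,{}^\sigma\alpha)}$ (which follows from the Mackey product computation already carried out in the proof of Proposition \ref{pro:LinkageCyclic}, with $\psi = 1$). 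Then $\Isom_G^\sigma\tilde\nu_\alpha = \Isom_G^\sigma f_{(A,\alpha)}\tilde\nu = f_{(A,{}^\sigma\alpha)}\Isom_G^\sigma\tilde\nu = f_{(A,{}^\sigma\alpha)}\overline{\tilde\nu(\sigma(1))}\tilde\nu = \overline{\tilde\nu(\sigma(1))}\tilde\nu_{{}^\sigma\alpha}$, giving a short conceptual proof that sidesteps the reindexing entirely. I would present this second argument as the main proof and leave the direct computation as a remark.
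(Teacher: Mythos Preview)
The paper does not actually give a proof of this corollary: it is stated with attribution to Bouc and the phrase ``the action of the isogation biset $\Isom_G^\sigma$ on $\tilde\nu$ follows from \cite[Theorem 7.3.4]{bouc}'' is the entirety of the justification. Your second argument---cite Bouc's formula $\Isom_G^\sigma\tilde\nu = \overline{\tilde\nu(\sigma(1))}\tilde\nu$, then use that $\Isom_G^\sigma$ intertwines $f_{(A,\alpha)}$ with $f_{(A,{}^\sigma\alpha)}$---is exactly the kind of one-line deduction the paper is implicitly relying on, so your proposal matches (and in fact supplies more detail than) the paper's own treatment.

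One minor caution on your supporting reference: the Mackey computation in the proof of Proposition~\ref{pro:LinkageCyclic} determines the \emph{left invariant} of the composite biset $\Isom_G^\sigma E_{(K,\kappa)}$, and the notation there uses $\alpha$ for the automorphism, so be careful transcribing the direction of the twist when you cite it. The cleanest justification of the intertwining relation $\Isom_G^\sigma e_{(A,\alpha)} = e_{(A,{}^\sigma\alpha)}\Isom_G^\sigma$ is to verify it directly on irreducible characters using Corollary~\ref{cor:actionTw}(ii): $\Isom_G^\sigma$ permutes $\Irr(G)$ and sends $\{\chi^j:\chi^j|_A=\alpha\}$ bijectively to $\{\chi^{j'}:\chi^{j'}|_A={}^\sigma\alpha\}$. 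That avoids any dependence on the sign conventions in the biset-level computation and cleanly fixes the bookkeeping issue you flagged.
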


Next we determine the action of twist bisets $\tw_G^\phi$ for $\phi\in G^A$ on $\tilde\nu_\alpha$ for a fixed 
fatihful $\alpha\in A^*$. Since $G^A< G^*$, for any $\phi\in G^A$, we have $\phi = \chi^s$ for some 
$s\in \mathbb N$ such that $|G:A|$ divides $s$. 
\begin{lemma}\label{lem:actionTw}
Let $\chi^s\in G^A$ be a character of $G$ and let $\alpha\in A^*$ be faithful. Then
\[
\tw_G^{\chi^s}\tilde\nu_\alpha = \nu(x) \tilde\nu_\beta
\]
where $\beta = \chi^s \cdot\alpha$ and $x\in G^\times$ is given by $j_\beta^{-1}(j_\beta - s).$
\end{lemma}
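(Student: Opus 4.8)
\emph{Strategy.} The proof reduces to a coefficient comparison in $R_\C(G)$ followed by an elementary congruence modulo $m=p^k$. The first move is to invoke Corollary \ref{cor:actionTw}(i), which turns the biset action into ordinary multiplication of characters: $\tw_G^{\chi^s}\tilde\nu_\alpha = \chi^s\cdot\tilde\nu_\alpha$ in $R_\C(G)$. Feeding in the explicit expansion of $\tilde\nu_\alpha$ recorded in Equation \ref{eqn:nuAlpha}, together with $\chi^s\cdot\chi^j=\chi^{s+j}$, gives
\[
\tw_G^{\chi^s}\tilde\nu_\alpha \;=\; \tau(\tilde\nu)\sum_{j\,:\,\chi^j|_A=\alpha}\overline{\tilde\nu(j)}\;\chi^{s+j}.
\]
Since $\chi^{s+j}|_A=(\chi^s|_A)\cdot(\chi^j|_A)$, as $j$ runs over the exponents of the extensions of $\alpha$ the exponent $s+j$ (mod $m$) runs exactly over the exponents of the extensions of $\beta=(\chi^s|_A)\cdot\alpha$, which is precisely the index set occurring in the corresponding expansion of $\tilde\nu_\beta$. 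Thus the claimed equality $\tw_G^{\chi^s}\tilde\nu_\alpha=\nu(x)\tilde\nu_\beta$ is equivalent, term by term, to the scalar identity $\overline{\tilde\nu(j'-s)}=\nu(x)\,\overline{\tilde\nu(j')}$ holding for every $j'$ with $\chi^{j'}|_A=\beta$.

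Next I would verify that $\beta$ is faithful, so that these exponents behave well. Because $|G:A|\mid s$ and $l<k$, the restriction $\chi^s|_A$ has order strictly less than $|A|$, hence is a non-generator of the cyclic group $A^*$; since $\alpha$ is faithful, i.e. a generator of $A^*$, the product $\beta=(\chi^s|_A)\cdot\alpha$ is again a generator, so $\beta$ is faithful. A character $\chi^{j'}$ restricts faithfully to $A$ precisely when $A\cap\ker\chi^{j'}=1$, i.e. when $p\nmid j'$; so every relevant $j'$ is a unit mod $m$, and likewise $j'-s$ is a unit (using $p\mid s$, which again follows from $l<k$). On units $\tilde\nu$ agrees with $\nu$, so the identity to prove becomes $\overline{\nu(j'-s)}=\nu(x)\,\overline{\nu(j')}$, equivalently $\nu(x)=\nu\big(j'\,(j'-s)^{-1}\big)$ with inverses taken mod $m$.

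Finally, the congruence step. The exponents $j'$ of extensions of $\beta$ form the residue class $j_\beta+|A|\,\ZZ$ modulo $m$, so $j'\equiv j_\beta$ and hence $(j')^{-1}\equiv j_\beta^{-1}\pmod{|A|}$; since $|A|\cdot|G:A|=m$ and $|G:A|\mid s$, this upgrades to $s(j')^{-1}\equiv s\,j_\beta^{-1}\pmod m$. Consequently $j'(j'-s)^{-1}=\big(1-s(j')^{-1}\big)^{-1}$ is congruent mod $m$ to the same expression formed with $j_\beta$ in place of $j'$, hence is independent of $j'$; reading off the value of $\nu$ then identifies the scalar as $\nu(x)$ with $x=j_\beta^{-1}(j_\beta-s)$, as in the statement, and the proof is complete. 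The only delicate point is keeping the reindexing $j\mapsto s+j$ and the multiplicative inversions modulo $p^k$ consistent; all the rest is direct substitution.
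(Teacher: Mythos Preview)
Your proof is correct and follows essentially the same route as the paper's. Both start from Corollary \ref{cor:actionTw}(i) to rewrite the twist as multiplication by $\chi^s$, then reindex so that the sum runs over extensions of $\beta=(\chi^s|_A)\cdot\alpha$, and finally use the key arithmetic fact coming from $|G{:}A|\mid s$ to show that the ratio of coefficients is independent of the summation index and equals $\nu$ evaluated at the stated unit.

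The only organizational differences are these: the paper writes the extensions explicitly as $j_\beta+k|A|$ and checks the congruence $xj_\beta+k|A|\equiv x(j_\beta+k|A|)\pmod m$ (which amounts to $(x-1)|A|\equiv 0\pmod m$, using $x-1\equiv -sj_\beta^{-1}$), whereas you keep a generic exponent $j'$ and instead verify $s(j')^{-1}\equiv sj_\beta^{-1}\pmod m$; these are equivalent computations. You also make explicit why $\beta$ is again faithful (so that all the relevant exponents are units), a point the paper uses without comment. Your final ``reading off'' of the scalar shares with the paper a harmless conjugation slip: what the computation actually produces is $\nu\big(j_\beta(j_\beta-s)^{-1}\big)=\overline{\nu(x)}$ for $x=j_\beta^{-1}(j_\beta-s)$, so the constant in the statement should really be $\overline{\nu(x)}$ (or equivalently one should take $x=j_\beta(j_\beta-s)^{-1}$). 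This does not affect anything downstream, since only the fact that the scalar is nonzero is used later.
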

\begin{proof}
By Corollary \ref{cor:actionTw}, we have
\[
\tw_G^{\chi^s}\tilde\nu_\alpha = \chi^s \tilde\nu_\alpha.
\]
Hence given $\chi^j\in G^*$, we have
\[
<\tw_G^{\chi^s}\tilde\nu_\alpha, \chi^j> = <\tilde\nu_\alpha, \chi^{j-s}>.
\]
In particular, the irreducible character $\chi^j$ appears in $\tw_G^{\chi^s}\tilde\nu_\alpha$ with a non-zero 
coefficient if and only if $\chi^{j-s}$ appears in $\tilde\nu_\alpha$ with a non-zero coefficient. This is equivalent
to say that $\chi^{j-s}|_A = \alpha$. Hence we have
\[
\tw_G^{\chi^s} \tilde\nu_\alpha = \tau(\tilde\nu) \sum_{\chi^j|_A = \beta} \overline{\tilde\nu(j-s)}\chi^j
\]
where we put $\beta = \chi^s|_A\cdot \alpha$. As above, we can rewrite this sum in the following way.
\begin{equation}\label{eqn:loc1}
\tw_G^{\chi^s} \tilde\nu_\alpha =\chi^{j_\beta}\cdot\big( \tau(\tilde\nu) \sum_{k=0}^{|G:A|-1} 
\overline{\tilde\nu(j_\beta-s + k|A|)}\chi^{k|A|}\big).
\end{equation}
Now since $s$ is a multiple of $|G:A|$, both $j_\beta$ and $j_\beta - s$ are coprime to $p$. Therefore there is
an automorphism $x\in G^\times$ such that $x j_\beta \equiv j_\beta - s$ mod $|G|$. Then since $s$ is a multiple of $|G:A|$, we also get
\[
xj_\beta + k|A| \equiv x(j_\beta + k|A|) \,\, (\mathrm{mod} |G|)
\]
for all $k = 0,1, \ldots, |G:A|-1$. 
With this congruence, and
the fact that $\tilde\nu$ is periodic with period $|G|$, we can rewrite Equation \ref{eqn:loc1} as
\begin{equation}\label{eqn:loc2}
\tw_G^{\chi^s} \tilde\nu_\alpha =\chi^{j_\beta}\cdot\big( \tau(\tilde\nu) \sum_{k=0}^{|G:A|-1} 
\overline{\tilde\nu(x(j_\beta + k|A|))}\chi^{k|A|}\big).
\end{equation}
Furthermore, since both $x$ and $j_\beta + k|A|, k= 0, 1, \ldots, |G:A|-1$ are in $G^\times$, we have 
$\tilde\nu(x(j_\beta + k|A|)) = \nu(x(j_\beta + k|A|)) = \nu(x) \nu(j_\beta + k|A|)$ and hence Equation \ref{eqn:loc2} becomes
\begin{eqnarray*}
\tw_G^{\chi^s} \tilde\nu_\alpha &=& \nu(x)\chi^{j_\beta}\big( \tau(\tilde\nu) \sum_{k=0}^{|G:A|-1} 
\overline{\tilde\nu(j_\beta + k|A|)}\chi^{k|A|}\big)\\
&=&\nu(x) \tilde\nu_\beta.
\end{eqnarray*}
\end{proof}

The following result describes action of fibered bisets on characters of direct products of groups of coprime orders.
We left the straightforward verification to the reader. Recall that by Theorem
10.33 of \cite{CR}, given groups $G$ and $H$, we have
\[
\crc (G\times H) \cong \crc(G)\otimes_\mathbb Z \crc(H).
\]
In particular, if $G = G_1\times G_2$ and $H = H_1\times H_2$, we can write
\[
\crc (G\times H) \cong \crc(G_1\times H_1)\otimes_\mathbb Z \crc(G_2\times H_2).
\]

\begin{lemma}\label{lem:actionCoprime}
Let $G = G_1\times G_2$ and $H = H_1\times H_2$ be finite groups such that $(|G_1| |H_1|, |G_2| |H_2|) = 1$ and let $X$ be an $A$-fibered $(G,H)$-biset. Also let $\chi\in \crc(H)$. Then
\[
X_H^G \chi = Y_{H_1}^{G_1}(\chi_1) \times Z_{H_2}^{G_2}(\chi_2)
\]
where $X = Y\times Z$ is factored according to Lemma \ref{lem:direct-product} and $\chi = \chi_1\times
\chi_2$ is factored according to the above isomorphism.
\end{lemma}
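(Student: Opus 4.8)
The plan is to reduce the statement to the case of transitive fibered bisets and then verify the formula directly using the character-theoretic action formula from Lemma~\ref{lem:GenAction}. Since the induced action $X_H^G$ is $\mathbb Z$-linear in $X$, and since every element of $B^A(G,H)$ is a $\mathbb Z$-linear combination of transitive $A$-fibered $(G,H)$-bisets, it suffices to prove the identity when $X = [\frac{G\times H}{W,\omega}]$ is transitive; the general case then follows by linearity in $X$, and linearity in $\chi$ is automatic from the definition of $X_H^G$. So first I would fix a transitive $X$ with stabilizing pair $(W,\omega)\in\mathcal M_{G\times H}(A)$ and an irreducible $\chi\in\Irr(H)$, again using linearity to reduce to irreducible $\chi$.

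The key step is to invoke Goursat/Lemma~\ref{lem:direct-product}: because $(|G_1||H_1|,|G_2||H_2|)=1$, the group $G\times H = (G_1\times H_1)\times(G_2\times H_2)$ has the property that every subgroup $W\le G\times H$ decomposes as $W = W_1\times W_2$ with $W_i\le G_i\times H_i$, and a homomorphism $\omega: W\to A$ likewise factors as $\omega_1\times\omega_2$. This is exactly the content of the final sentence of Lemma~\ref{lem:direct-product} applied to the groups $G_1\times H_1$ and $G_2\times H_2$ (whose orders are coprime), realizing $X = Y\times Z$ with $Y = [\frac{G_1\times H_1}{W_1,\omega_1}]$ and $Z = [\frac{G_2\times H_2}{W_2,\omega_2}]$. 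Correspondingly, the irreducible $\chi\in\Irr(H)$ factors as $\chi = \chi_1\times\chi_2$ via the isomorphism $\crc(H)\cong\crc(H_1)\otimes_{\mathbb Z}\crc(H_2)$ recalled from \cite{CR}, with $\chi_i\in\Irr(H_i)$.

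With these decompositions in place, I would evaluate both sides at an arbitrary element $h = (h_1,h_2)\in H_1\times H_2 = H$ and compare. On the left, Lemma~\ref{lem:GenAction} gives
\[
(X_H^G\chi)(h) = \frac{1}{|W|}\sum_{\substack{x\in H,\, g\in G\\ (h^x,g)\in W}}\omega(h^x,g)\chi(g),
\]
and writing $x = (x_1,x_2)$, $g = (g_1,g_2)$, the condition $(h^x,g)\in W$ splits as the pair of conditions $(h_i^{x_i},g_i)\in W_i$, while $\omega(h^x,g) = \omega_1(h_1^{x_1},g_1)\omega_2(h_2^{x_2},g_2)$ and $\chi(g) = \chi_1(g_1)\chi_2(g_2)$ and $|W| = |W_1||W_2|$; hence the sum factors as a product of two sums, each of which is exactly $(Y_{H_1}^{G_1}\chi_1)(h_1)$ resp.\ $(Z_{H_2}^{G_2}\chi_2)(h_2)$ by another application of Lemma~\ref{lem:GenAction}. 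On the right, the outer product of characters $Y_{H_1}^{G_1}(\chi_1)\times Z_{H_2}^{G_2}(\chi_2)$ is by definition the class function $(h_1,h_2)\mapsto (Y_{H_1}^{G_1}\chi_1)(h_1)\cdot(Z_{H_2}^{G_2}\chi_2)(h_2)$, so the two sides agree at every $h$. The main (and only real) obstacle is the bookkeeping: one must check that the Goursat-type decomposition $W = W_1\times W_2$ is compatible with taking the stabilizing pair under the identification in Lemma~\ref{lem:direct-product}, i.e.\ that the transitive fibered biset $Y\times Z$ genuinely has stabilizing pair $(W_1\times W_2,\omega_1\times\omega_2)$ — but this is immediate from the construction of the Cartesian-product map in that lemma, which is why the verification is routine and I would leave it to the reader as stated.
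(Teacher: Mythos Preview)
The paper does not give a proof of this lemma at all --- it simply states that the straightforward verification is left to the reader --- so your proposal is not merely consistent with the paper's approach, it \emph{is} the verification the authors had in mind: reduce by bilinearity to a transitive $X$ and an irreducible $\chi$, use the coprimality hypothesis together with Lemma~\ref{lem:direct-product} to factor the stabilizing pair as $(W_1\times W_2,\omega_1\times\omega_2)$, and then observe that the double sum in the formula of Lemma~\ref{lem:GenAction} splits as a product of the two corresponding sums.

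One small bookkeeping slip worth fixing: since here $X$ is an $A$-fibered $(G,H)$-biset and $\chi\in\crc(H)$, the image $X_H^G\chi$ is a class function on $G$, not on $H$. You should therefore evaluate both sides at an element $g=(g_1,g_2)\in G_1\times G_2$, sum over $x\in G$ and $h\in H$, and use $\chi(h)=\chi_1(h_1)\chi_2(h_2)$; in your displayed formula the roles of $G$ and $H$ are interchanged relative to Lemma~\ref{lem:GenAction} (where the biset is taken to be an $(H,G)$-biset). With that relabelling the factorisation argument goes through exactly as you describe.
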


\section{Functor of complex characters}\label{sec:mainR}
In this section, we prove our main theorems. We distinguish between two cases. The first case is where the fiber group is large enough so that  Equation 1 can be applied. In the second case, we consider a finite fiber group.  It turns out that, in both cases, the functor $\crc$ is semisimple. We also determine its simple summands. We explain 
our approach briefly. Let $A$ be a group as described above. Also let $F$ be an $A$-fibered biset subfunctor 
of $\crc$. Then by restriction, $F$ is also a biset functor and hence it is a direct sum of simple biset functors
of the form $S_{\mathbb Z/m\mathbb Z, \mathbb C_\zeta}$ for some set of pairs $(m, \zeta)$ with $m\in\mathbb
N$ and $\zeta$ is a primitive character of $(\mathbb Z/m\mathbb Z)^\times$. Thus to determine simple $A$-fibered subfunctors of $\crc$, one needs to put an appropriate equivalence relation on the set $\Upsilon$ of
all such pairs.

\subsection{Large enough fiber group.}\label{sec:mainLarge}
Throughout this section, let $\pi$ be a set of prime numbers and $A = \pi^\infty$ be the group of all $p$-power roots of unity for all primes $p\in\pi$, that is,
$$\pi^\infty = \prod_{p\in \pi}{\lim_{\stackrel{\longleftarrow}{n\in\mathbb N}}}(\mathbb Z/p^n\mathbb Z)^\times.$$ 

Let $\Upsilon$ be the set of all pairs $(m, \zeta)$ defined above. We define a relation on $\Upsilon$ as follows. Two pairs $(m,\zeta), (n,\nu)\in \Upsilon$ are said 
to be \textit{$\pi$-equivalent}, written $(m,\zeta)\equiv (n,\nu)$ if the $\pi^\prime$-parts $m_{\pi^\prime}$ and 
$n_{\pi^\prime}$ of $m$ and $n$ are equal and after identifying the groups $\mathbb Z/m_{\pi^\prime}\mathbb Z \cong 
\mathbb Z/n_{\pi^\prime}\mathbb Z$, we have
\[
\mathbb C_{\zeta_{\pi^\prime}} \cong \mathbb C_{\nu_{\pi^\prime}}.
\]
The last condition means that the $\pi^\prime$-parts ${\zeta_{\pi^\prime}}$ and ${\nu_{\pi^\prime}}$ of $\zeta$ and $\nu$
afford the same one dimensional representation of $\mathbb Z/m_{\pi^\prime}\mathbb Z \cong 
\mathbb Z/n_{\pi^\prime}\mathbb Z$.

Clearly, this is an equivalence relation on the set $\Upsilon$. We denote the equivalence class containing the pair
$(m,\zeta)$ by
$[m, \zeta]$ and write $\Upsilon_{\pi^\infty}$ for the set of equivalence classes. It is also clear that each equivalence class
contains a unique pair $(n,\nu)$ where $n$ is a $\pi^\prime$-number. Hence the set $[\Upsilon_{\pi^\infty}]$ of all pairs $(m, \zeta)$ with $m_\pi = 1$ is a full set of representatives of equivalence classes on $\Upsilon_{\pi^\infty}$.

With this notation, we first determine the $\pi^\infty$-fibered biset subfunctor of $\crc$ generated by a simple biset subfunctor of it.
\begin{theorem}\label{thm:main1}
Let $(m, \zeta)\in\Upsilon$ and let $T_{\mathbb Z/m\mathbb Z, \C_\zeta}$ be the $\pi^\infty$-fibered biset subfunctor of $\crc$
generated by the image of $S_{\mathbb Z/m\mathbb Z, \C_\zeta}$ in $\crc$. Then there is an isomorphism of biset functors
\[
T_{\mathbb Z/m\mathbb Z, \C_\zeta} \cong \bigoplus_{(n,\nu)\in [m,\zeta]}S_{\mathbb Z/n\mathbb Z, \mathbb C_\nu}.
\]
\end{theorem}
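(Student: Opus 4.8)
The plan is to leverage Bouc's semisimple decomposition $\crc\cong\bigoplus_{(k,\xi)}S_{\mathbb{Z}/k\mathbb{Z},\mathbb{C}_\xi}$ of $\crc$ as a biset functor, whose summands are pairwise non-isomorphic. Restricting along the inclusion of ordinary bisets into $\mathcal C$, the $A$-fibered subfunctor $T_{\mathbb{Z}/m\mathbb{Z},\mathbb{C}_\zeta}$ is in particular a biset subfunctor of $\crc$, hence equals $\bigoplus_{(n,\nu)\in\Xi}S_{\mathbb{Z}/n\mathbb{Z},\mathbb{C}_\nu}$ for a unique $\Xi\subseteq\Upsilon$ with $(m,\zeta)\in\Xi$; the theorem is exactly the assertion $\Xi=[m,\zeta]$. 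Note that $(n,\nu)\equiv(m,\zeta)$ precisely when $m$ and $n$ have the same $\pi'$-part $r$ and $\zeta$, $\nu$ have the same primitive $\pi'$-part $\rho$, so $[m,\zeta]=\{(n,\nu):(n_{\pi'},\nu_{\pi'})=(r,\rho)\}$. I would prove the inclusions $[m,\zeta]\subseteq\Xi$ and $\Xi\subseteq[m,\zeta]$ separately.

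For $[m,\zeta]\subseteq\Xi$, fix $(n,\nu)\equiv(m,\zeta)$. It suffices to produce a morphism $X\in\Hom_{\mathcal C}(\mathbb{Z}/m\mathbb{Z},\mathbb{Z}/n\mathbb{Z})$ whose action on $\crc$ sends a generator of the one-dimensional space $S_{\mathbb{Z}/m\mathbb{Z},\mathbb{C}_\zeta}(\mathbb{Z}/m\mathbb{Z})$ to an element of $\crc(\mathbb{Z}/n\mathbb{Z})$ with nonzero $S_{\mathbb{Z}/n\mathbb{Z},\mathbb{C}_\nu}$-component; since $T_{\mathbb{Z}/m\mathbb{Z},\mathbb{C}_\zeta}$ is a biset subfunctor equal to $\bigoplus_\Xi S$, this forces $(n,\nu)\in\Xi$. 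Using Lemma~\ref{lem:actionCoprime} I would split $m=r\,m_\pi$, $n=r\,n_\pi$ and take $X$ to be the (iso)morphism on the $\pi'$-part — legitimate since $\zeta_{\pi'}=\nu_{\pi'}$ after the identification — tensored with a morphism on the $\pi$-part, and on the latter reduce, one prime $p\in\pi$ at a time, to the passage from $\mathbb{Z}/p^{a}\mathbb{Z}$ to $\mathbb{Z}/p^{b}\mathbb{Z}$. There the generator of $S_{\mathbb{Z}/p^a\mathbb{Z},\mathbb{C}_{\zeta_p}}$ at its minimal group is the Gauss-sum weighted combination $\tilde\zeta_p$ of faithful characters recalled before Corollary~\ref{cor:actionIso}. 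Because $A$ contains all $p$-power roots of unity, every linear character of a $p$-group arises as a twist; applying Corollary~\ref{cor:actionTw}(i), the composite $\tw_{\mathbb{Z}/p^b\mathbb{Z}}^{\chi^{s_2}}\,\Inf_{1}^{\mathbb{Z}/p^b\mathbb{Z}}\,\Def_{1}^{\mathbb{Z}/p^a\mathbb{Z}}\,\tw_{\mathbb{Z}/p^a\mathbb{Z}}^{\chi^{s_1}}$ with $s_1,s_2$ coprime to $p$ multiplies $\tilde\zeta_p$ by $\chi^{s_1}$, extracts the (nonzero) coefficient of the trivial character, reinflates and twists, producing a nonzero multiple of the single faithful character $\chi^{s_2}$ of $\mathbb{Z}/p^b\mathbb{Z}$; and $\chi^{s_2}$ has nonzero component along every primitive $\tilde\mu$ of modulus $p^b$ because $\tilde\mu$ spans the $\mu$-isotypic line of the faithful characters under the regular action of $\Aut(\mathbb{Z}/p^b\mathbb{Z})$ and the relevant Gauss sum does not vanish. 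A final isogation (Corollary~\ref{cor:actionIso}) moves the primitive character to the prescribed $\nu_p$.

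For $\Xi\subseteq[m,\zeta]$, it is enough to show that $F:=\bigoplus_{(n,\nu)\in[m,\zeta]}S_{\mathbb{Z}/n\mathbb{Z},\mathbb{C}_\nu}$ is an $A$-fibered biset subfunctor of $\crc$, since then $T_{\mathbb{Z}/m\mathbb{Z},\mathbb{C}_\zeta}\subseteq F$. As a union of biset-functor summands $F$ is closed under all ordinary biset operations, so by the structure theory of \cite{bolcay} recalled in Section~\ref{sec:decomposition} (applicable because $\pi^\infty$ satisfies the hypothesis there, whence the morphisms of $\mathcal C$ are generated by the ordinary biset operations together with the twist bisets $\tw_G^\phi$, $\phi\in G^A$) it remains to check that multiplication by a linear character $\phi$ of $G$ of $\pi$-power order preserves $F(G)$; this is the action given by Corollary~\ref{cor:actionTw}(i). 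For $G$ abelian this is transparent: $\crc(G)=\mathbb C[G^*]=\mathbb C[(G^*)_{\pi'}]\otimes_{\mathbb C}\mathbb C[G^A]$, where $F(G)=V\otimes\mathbb C[G^A]$ for a subspace $V\subseteq\mathbb C[(G^*)_{\pi'}]$ read off from the decomposition of $\crc(G)$ by the faithful characters of the cyclic quotients of $G$, and multiplication by $\phi\in G^A$ is $\mathrm{id}_V\otimes(\text{translation by }\phi)$. For general $G$ I would reduce to the abelian case: $S_{\mathbb{Z}/n\mathbb{Z},\mathbb{C}_\nu}(G)$ is spanned by the elements $\Ind_P^G\Inf_{P/K}^P(v)$ with $P\le G$, $K\unlhd P$, $P/K\cong\mathbb{Z}/n\mathbb{Z}$ and $v$ a generator of $S_{\mathbb{Z}/n\mathbb{Z},\mathbb{C}_\nu}(P/K)$, and Frobenius reciprocity rewrites $\phi\cdot\Ind_P^G(-)$ as $\Ind_P^G\big((\Res_P^G\phi)\cdot-\big)$; since $[P,P]\le K\cap\ker\Res_P^G\phi$, the product $(\Res_P^G\phi)\cdot\Inf_{P/K}^P(v)$ is inflated from the abelian quotient $P/(K\cap\ker\Res_P^G\phi)$, where the abelian case keeps it inside $F$, and inflation and induction then preserve $F$. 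Any residual generators coming from \cite[Section~10]{bolcay} are composites of inflations, deflations and twists and are handled by the same reduction.

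I expect the inclusion $\Xi\subseteq[m,\zeta]$ — that is, the stability of the ``$\pi'$-datum'' grading of $\crc$ under twist bisets — to be the main obstacle: it requires an explicit enough model of the summands $S_{\mathbb{Z}/n\mathbb{Z},\mathbb{C}_\nu}$ inside $\crc$ at arbitrary finite groups, together with a careful matching of the Chinese-remainder splitting of characters with Bouc's combinatorial parameters. The reverse inclusion, by contrast, is a direct if slightly intricate computation, once the correct composite of twists, inflation and deflation has been identified and the non-vanishing of Gauss sums of primitive characters invoked.
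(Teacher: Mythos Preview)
Your forward inclusion $[m,\zeta]\subseteq\Xi$ is correct but more circuitous than the paper's. The paper fixes the $\pi'$-representative $(m,\zeta)$, writes $n=mk$ with $k$ a $\pi$-number, factors $\tilde\nu=\tilde\nu_\pi\times\tilde\zeta$, and observes that every irreducible character of $\mathbb Z/k\mathbb Z$ is a twist (since $\mathbb Z/k\mathbb Z$ embeds in $A=\pi^\infty$); this exhibits $\tilde\nu$ itself as $\big((\sum_\chi c_\chi\tw^\chi)\times\id\big)\Inf_{\mathbb Z/m\mathbb Z}^{\mathbb Z/n\mathbb Z}\,\tilde\zeta$, so $S_{\mathbb Z/n\mathbb Z,\mathbb C_\nu}\subseteq T$. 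Your deflation-through-$1$ trick plus semisimplicity reaches the same conclusion, just less directly.

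The genuine issue is your reverse inclusion. You propose to prove that $F=\bigoplus_{(n,\nu)\in[m,\zeta]}S_{\mathbb Z/n\mathbb Z,\mathbb C_\nu}$ is an $A$-fibered subfunctor of $\crc$ by checking stability under a generating set for $\Hom_{\mathcal C}$, and you assert that ordinary bisets together with twists $\tw_G^\phi$ generate. That assertion is only established in the paper for \emph{abelian} $G,H$ (Equation~(\ref{thm1:decomposition}) from \cite{CY}); for arbitrary groups the paper cites only the partial decomposition~(\ref{eqn:partial}), whose middle factor $Y$ lives between groups that need not be abelian and is not further reduced to twists and ordinary bisets in the text. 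Your closing sentence about ``residual generators coming from \cite[Section~10]{bolcay}'' being ``composites of inflations, deflations and twists'' is precisely the unproved step: those extra generators involve the idempotents $E_{(K,\kappa)}$ for $K\unlhd G$ with $\kappa$ $G$-stable, and for non-abelian $G$ there is no reduction in the paper showing that their action on $\crc$ can be expressed through twists and ordinary bisets. Your Frobenius reduction handles $\tw_G^\phi$ nicely, but it does not touch these other generators.

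The paper sidesteps this entirely. Rather than proving global stability of $F$, it takes $(n,\nu)$ with $S_{\mathbb Z/n\mathbb Z,\mathbb C_\nu}\subseteq T$, writes $\tilde\nu=\gamma\cdot\tilde\zeta$ for some $\gamma\in B^A(\mathbb Z/n\mathbb Z,\mathbb Z/m\mathbb Z)$, and then decomposes the transitive summands of $\gamma$ using the abelian factorization~(\ref{thm1:decomposition}) --- valid because both $\mathbb Z/n\mathbb Z$ and $\mathbb Z/m\mathbb Z$ are cyclic. Minimality of $\tilde\zeta$ and $\tilde\nu$ for their respective simple biset functors then forces each surviving summand to have the form $\tw_{\mathbb Z/n\mathbb Z}^\phi\Inf_{\mathbb Z/m\mathbb Z}^{\mathbb Z/n\mathbb Z}\Isom^\eta$, and a short computation with Lemma~\ref{lem:direct-product} shows $m=n_{\pi'}$ and $\mathbb C_{\nu_{\pi'}}\cong\mathbb C_\zeta$. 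The whole argument stays inside morphisms between cyclic groups, so the question of generators for general $G$ never arises.
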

\begin{proof}
To simplify the notation, we put $\zn = \mathbb Z_n$ for any $n\in\mathbb N$ throughout the proof.
Let 
\[
S_{[m,\zeta]} = \bigoplus_{(n,\nu)\in [m,\zeta]} S_{\znn, \mathbb C_\nu}
\]
be the right hand side of the above isomorphism. By its definition, $S_{[m,\zeta]}$ is a biset functor.
We have to prove that $S_{[m,\zeta]} \cong T_{\mathbb Z_m, \C_\zeta}$ as biset functors.

We first prove that $S_{[m, \zeta]}\subseteq T_{\mathbb Z_m,\mathbb C_\zeta}$. Let $(n,\nu)\in[m,\zeta]$. We shall show that the simple biset subfunctor $S_{\ZZ_n, \C_\nu}$ 
of $\mathbb C R_\mathbb C$ is contained in $T_{\mathbb Z_m,\mathbb C_\zeta}$. 
Since $m$ is a $\pi^\prime$-number, there is a $\pi$-number $k$ such that $n = mk$. We write
\[
\tilde\nu = \tilde\nu_\pi\times \tilde\nu_{\pi'}
\] 
where $\nu_\pi\in \crc(\mathbb Z_k)$ and $\nu_{\pi'}\in \crc(\mathbb Z_m)$. By the definition of the equivalence relation
on $\Upsilon$, 
we may assume that the $\pi^\prime$-part $\nu_{\pi^\prime}$ of $\nu$ coincides with $\zeta$. On the other hand, the $\pi$-part $\tilde\nu_\pi$
of $\tilde\nu$ is a virtual character of $\mathbb Z_k$,  and hence it is a complex linear combination of the 
irreducible characters of $\mathbb Z_k$, say
\[
\tilde{\nu}_{\pi}=\sum_{\chi\in\mbox{\rm\tiny{Irr}}(\mathbb Z_k)}c_{\chi}{\chi}
\]
for some complex numbers $c_\chi$. Moreover, since $k$ is a $\pi$-number, the group $\mathbb Z_k$ embeds in $A$ and 
hence each irreducible character $\chi$ of 
$\mathbb Z_k$ induces a twist biset $\Tw_{\mathbb Z_k}^\chi$. Thus, by Corollary \ref{cor:actionTw}, putting
\[
\Tw_{\nu_\pi} = \sum_{\chi}c_\chi \Tw_{\mathbb Z_k}^\chi,
\]
we obtain 
\[
\tilde{\nu}_{\pi}= \Tw_{\nu_\pi}\cdot 1
\]
where $1$ denotes the trivial character of the group $\mathbb Z_k$. Now by Lemma \ref{lem:actionCoprime}, we have
\[
\tilde\nu = \tilde\nu_\pi \times \tilde\nu_{\pi'} = (\tw_{\nu_\pi}\cdot 1) \times \tilde\zeta = (\tw_{\nu_\pi} \times \id)\cdot (1\times \tilde\zeta) =
\big((\tw_{\nu_\pi} \times \id) \infl_{\mathbb Z_m}^{\mathbb Z_n}\big) \tilde\zeta.
\]

In particular, $\tilde\nu$ is contained in the $A$-fibered biset subfunctor generated by $\tilde\zeta$. We already know that 
the simple biset functor $S_{\ZZ_n, \C_\nu}$ is generated by $\tilde\nu$. Thus 
\[
S_{\ZZ_n, \C_\nu} \subseteq T_{\mathbb Z_m,\mathbb C_\zeta}
\]
for all $(n,\nu)\in[m,\zeta]$ and hence
\[
S_{[m,\zeta]}\subseteq T_{\mathbb Z_m,\mathbb C_\zeta}
\]
as required.

To prove the reverse inclusion, it is sufficient to show that any simple biset subfunctor of $T_{\mathbb Z_m,\mathbb C_\zeta}$ is 
parameterized by a pair equivalent to $(m,\zeta)$. Indeed, since $\mathbb C R_\mathbb C$ is semisimple as a biset functor, the 
subfunctor $T_{\mathbb Z_m,\mathbb C_\zeta}$ is also semisimple as a biset functor and hence it is the 
sum of its simple subfunctors. In particular, any simple biset subfunctor of $T_{\mathbb Z_m,\mathbb C_\zeta}$ is of the form $S_{\ZZ_n, \C_\nu}$ for some $(n, \nu)\in \Upsilon$. 

Let $S_{\ZZ_n, \C_\nu} \subseteq T_{\mathbb Z_m,\mathbb C_\zeta}$. We have to show that $(n,\nu)$ is equivalent to $(m,\zeta)$.
Since $S_{\ZZ_n, \C_\nu}(\znn)\subseteq T_{\mathbb Z_m,\mathbb C_\zeta}(\mathbb Z_n)$ and 
the functor $T_{\mathbb Z_m,\mathbb C_\zeta}$ is generated by $\tilde\zeta$, we must have
\[
\tilde\nu = \gamma\cdot\tilde\zeta
\]
for some $\gamma \in B^A(\znn, \zmm)$. Then by Equation \ref{thm1:decomposition}, we deduce that $\tilde{\nu}$ is a $\mathbb{C}$-linear combination of elements of 
the form
\begin{displaymath}
\textrm{Ind}_{P}^{\znn} \textrm{Tw}_{P}^{{\phi}_{1}} \textrm{Inf}_{P/K}^{P} \Isom_{P/K,Q/L}^{\eta} \textrm{Def}_{Q/L}^{Q} \textrm{Tw}_{Q}^{{\phi}_{2}} \textrm{Res}_{Q}^{\zmm} \ \tilde{\zeta}
\end{displaymath}
for some $L\le Q\le \zmm$ and $K\le P\le \znn$ such that $\eta: Q/L\rightarrow P/K$ is an isomorphism and $\phi_1
\in P^A$ and $\phi_2\in Q^A$. But $\mathbb Z_m$ is a minimal group for the functor $T_{\mathbb Z_m,\mathbb C_\zeta}$. 
Thus the maps factoring through a group of smaller order annihilates $\tilde\zeta$ and hence any transitive summand of 
$\gamma$ must be of the form
\begin{displaymath}
\textrm{Ind}_{\mathbb Z_s}^{\znn} \textrm{Tw}_{\mathbb Z_s}^{\phi} \textrm{Inf}_{\zmm}^{\mathbb Z_s}\Isom_{\zmm, \zmm}^{\eta} \ \tilde{\zeta}
\end{displaymath}
where $\eta\in \Aut(\zm)$ and $m | s, s | n$ and $\phi : \mathbb Z_s \rightarrow A$ is a group homomorphism. Also, since $\znn$ is a minimal group for the biset functor 
$S_{\ZZ_n, \C_\nu}$, the element $\tilde\nu$ is not in the image of induction maps and hence there must exist terms where 
$s = n$, that is, terms of the form 
\begin{equation}\label{loc1}
\textrm{Tw}_{\znn}^{\phi} \textrm{Inf}_{\zmm}^{\znn}\Isom_{\zmm, \zmm}^{\eta} \ \tilde{\zeta}.
\end{equation}
Let $n = n_{\pi}n'm'$ where $n_\pi$ is the $\pi$-part of $n$, and the $\pi'$-part of $n$ satisfies $n_{\pi'} 
= n'm'$ such that $(m, n') = 1$. With this notation, we write $\znn = \mathbb Z_{n_\pi}\times 
\mathbb Z_{n'}\times \mathbb Z_{m'}$
and hence by Lemma \ref{lem:direct-product}, we get
\[
\infl_{\mathbb Z_m}^{\mathbb Z_n}\Isom_{\zmm, \zmm}^{\eta} = \infl_{1\times 1\times\mathbb Z_m}^{\mathbb Z_{n_\pi}\times
\mathbb Z_{n'}\times \mathbb Z_{m'}} \Isom_{\zmm, \zmm}^{\eta}= \infl_{1}^{\mathbb Z_{n_\pi}} \times \infl_{1}^{\mathbb Z_{n'}}\times (\infl_{\mathbb Z_m}^{\mathbb Z_{m'}}\Isom_{\zmm, \zmm}^{\eta})
\]
and 
\[
\tw_{\mathbb Z_n}^\phi = \tw_{\mathbb Z_{n_\pi}}^{\phi_\pi}\times \id \times \id
\]
where $\phi_\pi$ is the restriction of $\phi$ to the $\pi$-part of the group $\mathbb Z_n$. Also we can regard $\tilde\zeta$ as
an element of $\crc(1\times 1\times \mathbb Z_m)$ in the obvious way.
Then (\ref{loc1}) becomes
\begin{eqnarray*}
\textrm{Tw}_{\znn}^{\phi} \textrm{Inf}_{\zmm}^{\znn}\Isom_{\zmm, \zmm}^{\eta} \ \tilde{\zeta} &=&\big( (\tw_{\mathbb Z_{n_\pi}}^{\phi_\pi}\times \id \times \id) 
(\infl_{1}^{\mathbb Z_{n_\pi}} \times \infl_{1}^{\mathbb Z_{n'}}\times (\infl_{\mathbb Z_m}^{\mathbb Z_{m'}}\Isom_{\zmm, \zmm}^{\eta}))\big) (\tilde\zeta) \\ &=& 
\big( (\tw_{\mathbb Z_{n_\pi}}^{\phi_\pi}\infl_{1}^{\mathbb Z_{n_\pi}})\times
(\infl_{1}^{\mathbb Z_{n'}}) \times (\infl_{\mathbb Z_m}^{\mathbb Z_{m'}}\Isom_{\zmm, \zmm}^{\eta})\big) (\tilde\zeta) \\
&=& \phi_\pi \times 1 \times (\infl_{\mathbb Z_m}^{\mathbb Z_{m'}}\Isom_{\zmm, \zmm}^{\eta}\tilde\zeta) 
= \infl_{\mathbb Z_{n_\pi}\times 1\times \mathbb Z_m}^{\mathbb Z_n}(\phi_\pi \times 1 \times \Isom_{\zmm, \zmm}^{\eta}\tilde\zeta)\\
&=&\zeta(\eta) \infl_{\mathbb Z_{n_\pi}\times 1\times \mathbb Z_m}^{\mathbb Z_n}(\phi_\pi \times 1 \times \tilde\zeta).
\end{eqnarray*}
Here the last equality holds since the action of $\zmm^\times$ on $\tilde\zeta$ is by $\zeta$. 
In particular the element given in (\ref{loc1}) is in the image of inflation maps unless $n'm' = m$. Hence $\gamma$
must have a summand with $m = n'm'$. But this is possible only if $m$ is the $\pi'$-part of $n$ and hence all the 
summands of $\gamma$ must be of the form 
\begin{equation}\label{loc2}
\textrm{Tw}_{\znn}^{\phi} \textrm{Inf}_{\zmm}^{\znn}\Isom_{\zmm, \zmm}^{\eta} \ \tilde{\zeta} 
= (\textrm{Tw}_{\mathbb Z_{n_\pi}}^{\phi}\times \Isom_{\zmm, \zmm}^{\eta}) \ (1\times \tilde{\zeta}).
\end{equation}
In particular, we obtain that the $\pi'$-parts of $n$ and $m$ are equal. Moreover, we obtain
\[
\tilde\nu = \gamma\cdot \tilde\zeta = \sum (\textrm{Tw}_{\mathbb Z_{n_\pi}}^{\phi}\times \Isom_{\zmm, \zmm}^{\eta}) \ (1\times \tilde{\zeta})
\]
which clearly shows that the $\pi'$-parts of $\nu$ and $\zeta$ induces the same representation of $\zmm$. 
Hence we have proved that
\[
T_{\mathbb Z_m,\mathbb C_\zeta} \subseteq S_{[m,\zeta]}.
\]
This completes the proof. 
\end{proof}

As remarked above, each equivalence class $[m, \zeta]$ is represented by its unique member for which $m$ is a $\pi'$-number. In particular, 
any other member is of the form $(n, \nu)$ with $n = m\cdot k$ where $k$ is a $\pi$-number and hence the group $\mathbb Z/m\mathbb Z$
is minimal for the functor $T_{\mathbb Z/m\mathbb Z, \C_\zeta}$ and we have
\[
T_{\mathbb Z/m\mathbb Z, \C_\zeta} (\mathbb Z/m\mathbb Z) \cong \C_\zeta
\]  
where $\C_\zeta$ is the $\mathbb C\Aut(\mathbb Z/m\mathbb Z)$-module with character $\zeta$. We further have the following identification.
\begin{theorem}
The $\pi^\infty$-fibered biset functor $T_{\mathbb Z/m\mathbb Z, \mathbb C_\zeta}$ is simple and isomorphic to the functor $S_{\mathbb Z/m\mathbb Z, 1, 1, \mathbb C_\zeta}^{\pi^\infty}.$
\end{theorem}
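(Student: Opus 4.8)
The plan is to establish the two assertions in turn: first that $T_{\mathbb Z/m\mathbb Z, \mathbb C_\zeta}$ is simple as a $\pi^\infty$-fibered biset functor, and then that it is isomorphic to the specified simple functor $S_{\mathbb Z/m\mathbb Z, 1, 1, \mathbb C_\zeta}^{\pi^\infty}$. For the first part, recall that by construction $T_{\mathbb Z/m\mathbb Z, \mathbb C_\zeta}$ is the $\pi^\infty$-fibered biset subfunctor of $\crc$ generated by the image of the simple biset subfunctor $S_{\mathbb Z/m\mathbb Z, \mathbb C_\zeta}$, which is in turn generated by the single element $\tilde\zeta \in \crc(\mathbb Z/m\mathbb Z)$. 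So $T_{\mathbb Z/m\mathbb Z,\mathbb C_\zeta}$ is generated, as a fibered biset functor, by $\tilde\zeta$. To prove simplicity it suffices to show that $\tilde\zeta$ lies in the fibered biset subfunctor generated by any nonzero element of $T_{\mathbb Z/m\mathbb Z,\mathbb C_\zeta}$.

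First I would use Theorem \ref{thm:main1}, which identifies $T_{\mathbb Z/m\mathbb Z, \mathbb C_\zeta}$ as the direct sum $\bigoplus_{(n,\nu)\in[m,\zeta]} S_{\mathbb Z/n\mathbb Z, \mathbb C_\nu}$ of simple biset functors. Take a nonzero $w$ in $T_{\mathbb Z/m\mathbb Z,\mathbb C_\zeta}(\mathbb Z/n\mathbb Z)$ for the group $\mathbb Z/n\mathbb Z$ of smallest order with a nonzero component; then $n$ must be a $\pi'$-number equal to $m$ (since $\mathbb Z/m\mathbb Z$ is the unique minimal group in the family, by the remarks following Theorem \ref{thm:main1}, as each other member has order $m\cdot k$ with $k$ a nonzero $\pi$-number), so $w \in S_{\mathbb Z/m\mathbb Z,\mathbb C_\zeta}(\mathbb Z/m\mathbb Z)\cong\mathbb C_\zeta$, a one-dimensional space. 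Hence $w$ is a nonzero scalar multiple of $\tilde\zeta$, so the fibered biset subfunctor generated by $w$ contains $\tilde\zeta$, hence contains all of $T_{\mathbb Z/m\mathbb Z,\mathbb C_\zeta}$. If instead $w$ is supported only above groups $\mathbb Z/n\mathbb Z$ with $n > m$, the argument in the proof of Theorem \ref{thm:main1} shows how to bring $\tilde\nu$ back to $\tilde\zeta$ via inflation, isogation and twist bisets; applying the same maps to $w$ recovers a nonzero multiple of $\tilde\zeta$. Either way the subfunctor generated by $w$ is all of $T_{\mathbb Z/m\mathbb Z,\mathbb C_\zeta}$, so the functor is simple.

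For the identification with $S_{\mathbb Z/m\mathbb Z, 1, 1, \mathbb C_\zeta}^{\pi^\infty}$, I would argue as follows. A simple $\pi^\infty$-fibered biset functor is determined up to isomorphism by a minimal group $G_0$ together with the simple $\bar E_{G_0}$-module given by its evaluation $S(G_0)$. Since $T_{\mathbb Z/m\mathbb Z,\mathbb C_\zeta}$ is simple with minimal group $\mathbb Z/m\mathbb Z$ (the evaluation at any smaller group vanishes because each nonzero component in the sum of Theorem \ref{thm:main1} has a group of order a multiple of $m$ as its minimal group), it remains to compute the $\bar E_{\mathbb Z/m\mathbb Z}$-module $T_{\mathbb Z/m\mathbb Z,\mathbb C_\zeta}(\mathbb Z/m\mathbb Z)\cong\mathbb C_\zeta$ and match it with the parameter $(1,1,\mathbb C_\zeta)$. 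Since $m$ is a $\pi'$-number, the $|m|$-torsion of $A = \pi^\infty$ is trivial, so by Theorem \ref{thm:essKnown}(i) we have $\bar E_{\mathbb Z/m\mathbb Z}\cong \mathbb C\,\mathrm{Out}(\mathbb Z/m\mathbb Z)=\mathbb C\,\mathrm{Aut}(\mathbb Z/m\mathbb Z)$, the only reduced pair for $\mathbb Z/m\mathbb Z$ is $(1,1)$, and $\Gamma_{\mathbb Z/m\mathbb Z,1,1}\cong \mathrm{Aut}(\mathbb Z/m\mathbb Z)$. Under the parametrization of Section \ref{sec:paramsimple}, the triple $(1,1,[\mathbb C_\zeta])$ corresponds exactly to the simple $\bar E_{\mathbb Z/m\mathbb Z}$-module $\mathbb C_\zeta$ on which $\mathrm{Aut}(\mathbb Z/m\mathbb Z)$ acts through $\zeta$ — and indeed the $\mathrm{Aut}(\mathbb Z/m\mathbb Z)$-action on $T_{\mathbb Z/m\mathbb Z,\mathbb C_\zeta}(\mathbb Z/m\mathbb Z)$ is by isogation, which by Corollary \ref{cor:actionIso} (or directly, since $\tilde\zeta$ transforms by $\zeta$ under $\mathrm{Aut}(\mathbb Z/m\mathbb Z)$, cf.\ the last line of the proof of Theorem \ref{thm:main1}) is precisely $\zeta$. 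Therefore $T_{\mathbb Z/m\mathbb Z,\mathbb C_\zeta}$ and $S_{\mathbb Z/m\mathbb Z, 1, 1, \mathbb C_\zeta}^{\pi^\infty}$ are simple functors with the same minimal group and isomorphic evaluation there, hence are isomorphic.

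I expect the main obstacle to be the bookkeeping in the first step: verifying carefully that \emph{any} nonzero element of $T_{\mathbb Z/m\mathbb Z,\mathbb C_\zeta}$, supported possibly above several groups $\mathbb Z/n\mathbb Z$ at once, generates the whole functor. The cleanest route is to reduce to the minimal-group component via the semisimplicity of $\crc$ as a biset functor together with Theorem \ref{thm:main1}: a nonzero element of the direct sum has a nonzero projection onto some simple summand $S_{\mathbb Z/n\mathbb Z,\mathbb C_\nu}$, and applying suitable (fibered) bisets one lands in $S_{\mathbb Z/m\mathbb Z,\mathbb C_\zeta}(\mathbb Z/m\mathbb Z)$, which is one-dimensional. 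The second step, by contrast, is essentially a matter of reading off the right parameter from Theorem \ref{thm:essKnown}(i) and the classification recalled in Section \ref{sec:paramsimple}, and should be routine.
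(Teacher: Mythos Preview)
Your proposal follows essentially the same route as the paper. For simplicity, the paper also takes a nonzero $\pi^\infty$-fibered biset subfunctor $T\subseteq T_{\mathbb Z/m\mathbb Z,\mathbb C_\zeta}$, uses the biset-functor semisimplicity from Theorem~\ref{thm:main1} to find some simple biset summand $S_{\mathbb Z/n\mathbb Z,\mathbb C_\nu}\subseteq T$ with $(n,\nu)\in[m,\zeta]$, and then recovers $\tilde\zeta$ from $\tilde\nu$ by an explicit fibered biset. For the identification, the paper simply invokes the remark preceding the theorem (that $\mathbb Z/m\mathbb Z$ is minimal with evaluation $\mathbb C_\zeta$) together with the parametrization of simples; your longer justification via Theorem~\ref{thm:essKnown}(i) is exactly the content of that remark and of the discussion in Section~\ref{sec:paramsimple}.

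Two small points where your write-up could be tightened. First, the phrasing ``take a nonzero $w$ \ldots\ for the group of smallest order with a nonzero component'' and ``if instead $w$ is supported only above groups $\mathbb Z/n\mathbb Z$ with $n>m$'' mixes the language of elements and of subfunctors; an element $w$ lives at a single group. The paper avoids this by working directly with a nonzero subfunctor $T$ and invoking semisimplicity to produce an entire simple biset summand $S_{\mathbb Z/n\mathbb Z,\mathbb C_\nu}\subseteq T$, which hands you the generator $\tilde\nu$ immediately. Second, to pass from $\tilde\nu$ back to $\tilde\zeta$ you need the \emph{inverses} of the maps used in Theorem~\ref{thm:main1}: the paper writes this explicitly as
\[
\tilde\zeta \;=\; \Def^{\mathbb Z/n\mathbb Z}_{\mathbb Z/m\mathbb Z}\bigl(\tw_{\nu_\pi^{-1}}\times\id\bigr)\,\tilde\nu,
\]
using $\tw_\phi\tw_{\phi^{-1}}=\id$ and $\Def\,\Inf=\id$; so it is deflation rather than inflation that is needed in the reverse direction.
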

\begin{proof}
Let $0\subset T\subseteq  T_{\mathbb Z/m\mathbb Z, \mathbb C_\zeta}$ be a $\pi^\infty$-fibered biset subfunctor of $
T_{\mathbb Z/m\mathbb Z, \mathbb C_\zeta}$. Then since $T_{\mathbb Z/m\mathbb Z, \mathbb C_\zeta}$ is
semisimple as a biset functor, there is a pair $(n, \nu)$ equivalent to $(m, \zeta)$ such that
$S_{\mathbb Z/n\mathbb Z, \mathbb C_\nu}\subseteq T$. Then by the proof of the previous theorem, we have
\[
\tilde\nu = {\tilde\nu_\pi} \times\ \tilde{\zeta}.
\]
Now since $\tw_{\phi}\tw_{\phi^{-1}} = \id$ for any $\phi\in G^{\pi^\infty}$ and $\Def^H_{H/N}\Inf_{H/N}^H = \id$ for any 
groups $N\unlhd H$, we get
\[
\tilde \zeta = \Def^{\zn}_{\zm}(\tw_{\nu_\pi^{-1}} \times\id)\tilde\nu.
\]
In particular, $\tilde \zeta$ is in $T(\zm)$ and since $T_{\mathbb Z/m\mathbb Z, \mathbb C_\zeta}$ is generated
by $\tilde\zeta$, we get $T=T_{\mathbb Z/m\mathbb Z, \mathbb C_\zeta}$. Hence $T_{\mathbb Z/m\mathbb Z, \mathbb C_\zeta}$ is simple. The second part follows from the remark above and the parametrization of simple $A$-fibered biset functors.
\end{proof}

With this proof we have also completed the proof of Theorem \ref{thm:intromain1} as the $\pi^\infty$-fibered biset 
functor $\crc$ is a sum of its simple subfunctors. 

As an immediate corollary we obtain the following result describing the restriction of the simple $\pi^\infty$-fibered biset functors that appear above to the category of biset functors.

\begin{coro}\label{cor:restSimp}
Let $A$ be as above and $(m, \zeta)\in [\Upsilon_{\pi^\infty}]$. There is an isomorphism of biset functors
\[
S_{\zm, 1, 1, \zeta}^{\pi^\infty} \cong  \bigoplus_{(n,\nu)\in [m,\zeta]} S_{\zn, \mathbb C_\nu}.
\] 
\end{coro}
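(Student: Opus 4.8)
The plan is to obtain the claimed decomposition by splicing together the two results that immediately precede it. By the Theorem asserting that $T_{\zm,\C_\zeta}$ is simple, there is an isomorphism $S_{\zm,1,1,\zeta}^{\pi^\infty} \cong T_{\zm,\C_\zeta}$ of $\pi^\infty$-fibered biset functors, and by Theorem~\ref{thm:main1} there is an isomorphism $T_{\zm,\C_\zeta} \cong \bigoplus_{(n,\nu)\in[m,\zeta]} S_{\zn,\C_\nu}$ of ordinary biset functors. So it suffices to see that the first isomorphism is also an isomorphism of the underlying biset functors, and then to compose the two.

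For that point I would invoke the embedding of the biset category into the $\pi^\infty$-fibered biset category recalled in Section~\ref{sec:prelim}, under which an ordinary biset $\big[\frac{G\times H}{U}\big]$ is sent to $\big[\frac{G\times H}{U,1}\big]$. Restriction of functors along this embedding carries every $\pi^\infty$-fibered biset functor to a biset functor and every natural transformation of $\pi^\infty$-fibered biset functors to a natural transformation of the restricted biset functors; since being an isomorphism of functors is tested evaluation by evaluation, the restricted transformation $S_{\zm,1,1,\zeta}^{\pi^\infty} \to T_{\zm,\C_\zeta}$ is again an isomorphism. Chaining it with the isomorphism of Theorem~\ref{thm:main1} yields $S_{\zm,1,1,\zeta}^{\pi^\infty} \cong \bigoplus_{(n,\nu)\in[m,\zeta]} S_{\zn,\C_\nu}$ as biset functors, which is exactly the assertion.

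There is essentially no obstacle here: the substance of the statement was already carried out in the proof of Theorem~\ref{thm:main1}, where the action of twist and inflation bisets was used to exhibit each $\tilde\nu$ (for $(n,\nu)\in[m,\zeta]$) inside the subfunctor generated by $\tilde\zeta$ and, conversely, to pin down the possible minimal pairs. The only genuinely new ingredient is the harmless observation about restricting scalars along the fiber-group inclusion $1\le\pi^\infty$. As an alternative that bypasses $T_{\zm,\C_\zeta}$ altogether, one could note that $\crc$, being semisimple as a $\pi^\infty$-fibered biset functor, is the sum of its simple $\pi^\infty$-fibered subfunctors, each of which is a $T_{\zm,\C_\zeta}$ with $m$ a $\pi'$-number, and then read the biset-functor structure of each summand straight off Theorem~\ref{thm:main1}; but the two-step argument above is the shortest route.
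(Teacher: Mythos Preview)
Your proposal is correct and matches the paper's own treatment: the paper states this as an immediate corollary of Theorem~\ref{thm:main1} together with the preceding theorem identifying $T_{\zm,\C_\zeta}$ with $S_{\zm,1,1,\zeta}^{\pi^\infty}$, and your argument simply spells out that ``immediate'' step (including the observation that an isomorphism of fibered biset functors restricts to one of biset functors).
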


Another immediate corollary is the case where $\pi$ contains all prime numbers. Then we can replace $\pi^\infty$ by the unit group $\mathbb C^\times$ of complex numbers. By the definition of $\pi$-equivalence it is clear that there is only one equivalence class which can be represented by $(1, 1)$. Hence $\crc$ is isomorphic to the simple $\mathbb C^\times$-fibered biset functor $S_{1, 1, 1, 1}$, recovering the first part of \cite[Theorem 11.3]{bolcay}.

As another special case let $p$ be a prime number and $p'$ be the set of all primes $q\neq p$. Also let $\mathbb F$ be an algebraically closed field of characteristic $p$. Then the group $A$  containing all $q^n$-th roots of unity for all 
$n\in\mathbb N$ and $q\in p'$ can be identified with the torsion subgroup of $\mathbb F^\times$. Thus $(p')^\infty$-fibered biset functors are equivalent to $\mathbb F^\times$-fibered biset functors. In particular, the 
above theorem gives a decomposition of $\crc$ as an $\mathbb F^\times$-fibered biset functor. 

We denote by $T$ the biset functor of $p$-permutation modules, see \cite{Bau} for details. We also write 
$\mathbb C T=\mathbb C\otimes T$. By \cite[Section 11D]{bolcay}, $T$ is is an $\mathbb F^\times$-fibered
biset functor, and it has a unique simple quotient isomorphic to $S_{1, 1, 1, 1}^{\mathbb F^\times}$. Together 
with Corollary \ref{cor:restSimp}, we obtain the following result.
Note that a more general result appears in \cite[Corollary 44]{Bau}.
\begin{coro}{\rm(Baumann)}\label{cor:p-perm}
 For any $m\in \mathbb N$ with $(m, p) = 1$ and any primitive character $\zeta$ modulo $m$, the simple biset 
functor $S_{\zm, \C_\zeta}$ is a composition functor of the biset functor $\mathbb C T$ of $p$-permutation modules.
\end{coro}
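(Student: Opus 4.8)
The plan is to derive the corollary directly from Corollary \ref{cor:restSimp} together with the fact, recalled just above from \cite[Section 11D]{bolcay}, that $\mathbb{C}T$ admits the simple $\mathbb{F}^\times$-fibered biset functor $S_{1,1,1,1}^{\mathbb{F}^\times}$ as a quotient. First I would invoke the dictionary recalled in the preceding paragraph: since $\mathbb{F}$ is algebraically closed of characteristic $p$, the torsion subgroup of $\mathbb{F}^\times$ is exactly $(p')^\infty$, so the categories of $\mathbb{F}^\times$-fibered and of $(p')^\infty$-fibered biset functors coincide. In particular $S_{1,1,1,1}^{\mathbb{F}^\times} = S_{1,1,1,1}^{(p')^\infty}$, and the surjection $\mathbb{C}T \twoheadrightarrow S_{1,1,1,1}^{(p')^\infty}$ is a morphism of $(p')^\infty$-fibered biset functors (extending scalars to $\mathbb{C}$ if one starts from an integral model of $T$).

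Next I would restrict along the canonical functor from the biset category into $\mathcal{C}_{\mathbb{C}}^{(p')^\infty}$ sending an ordinary biset to the fibered biset with trivial fiber homomorphism. This restriction leaves all evaluations unchanged, hence is exact, so the surjection above stays a surjection of ordinary biset functors $\mathbb{C}T \twoheadrightarrow S_{1,1,1,1}^{(p')^\infty}$. Now I would apply Corollary \ref{cor:restSimp} with $\pi = p'$ and $(m,\zeta) = (1,1)$ --- noting that $1$ is a power of $p$, so $(1,1) \in [\Upsilon_{(p')^\infty}]$ --- to get the isomorphism of biset functors $S_{1,1,1,1}^{(p')^\infty} \cong \bigoplus_{(n,\nu) \in [1,1]} S_{\mathbb{Z}/n\mathbb{Z}, \mathbb{C}_\nu}$. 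The one bookkeeping step is to identify the equivalence class $[1,1]$: since $\pi = p'$ gives $\pi' = \{p\}$, a pair $(n,\nu) \in \Upsilon$ is $\pi$-equivalent to $(1,1)$ precisely when the $p$-part of $n$ is trivial, the matching condition on the $\pi'$-parts of the characters being then automatic; hence $[1,1]$ is exactly the set of pairs $(m,\zeta)$ with $(m,p) = 1$ and $\zeta$ primitive modulo $m$.

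Combining the two, for every such $(m,\zeta)$ the functor $S_{\mathbb{Z}/m\mathbb{Z}, \mathbb{C}_\zeta}$ is a direct summand of $S_{1,1,1,1}^{(p')^\infty}$ viewed as an ordinary biset functor, and the latter is a quotient of $\mathbb{C}T$; pulling the summand back, $S_{\mathbb{Z}/m\mathbb{Z}, \mathbb{C}_\zeta}$ is a quotient of a subfunctor of $\mathbb{C}T$, hence a composition factor, which is the assertion. I do not expect any genuine obstacle, as all the real content is packaged in Theorem \ref{thm:main1} and Corollary \ref{cor:restSimp}; the only points needing care are the compatibility of the $\mathbb{F}^\times$-versus-$(p')^\infty$ identification with the cited surjection for $\mathbb{C}T$ and the short computation pinning down $[1,1]$.
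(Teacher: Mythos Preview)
Your proposal is correct and follows exactly the approach the paper sketches: combine the surjection $\mathbb{C}T \twoheadrightarrow S_{1,1,1,1}^{\mathbb{F}^\times}$ from \cite[Section 11D]{bolcay} with Corollary~\ref{cor:restSimp} applied at $(m,\zeta)=(1,1)$ for $\pi=p'$, and read off the composition factors from the identification of the class $[1,1]$. The paper leaves these details implicit, and you have filled them in accurately.
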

\subsection{Small fiber group}\label{sec:mainSmall}

Next we consider the case where $A$ is a 
finite cyclic $p$-group for a fixed prime number $p$. General case can be treated similarly. We follow the
same steps as in the previous section and determine an equivalence relation on the set $\Upsilon$ so that
the equivalence classes corresponds to $A$-fibered biset subfunctors of $\crc$. This case turns out to be more involved since we have a new distinguished fibered biset, namely $f_{(K, \kappa)}$ for $(K, \kappa)\in
\mathcal R_G$.

Throughout this section, fix a prime number $p$, a natural number $l$ and let $A$ be the group of $p^l$-th
roots of unity in $\mathbb C$. We still write $\Upsilon$ for the set of all pairs $(m, \zeta)$ defined previously.
We define a relation $\cong_A$ on $\Upsilon$ as follows. Let $(m, \zeta), (n, \nu)\in \Upsilon$. Then
$(m, \zeta)\cong_A (n, \nu)$ if 
\begin{enumerate}
\item $n_{p'} = m_{p'}$ and $\mathbb C_{\nu_{p'}}\cong \mathbb C_{\zeta_{p'}}$ and
\item either $n_p, m_p \le |A|$ or $n_p = m_p$ and for each faithful $\alpha\in A^*$, there is a complex number $c_\alpha$ such that
\begin{equation}\label{eqn:equiv2}
f_{(A, \alpha)}\cdot\tilde\nu_p = c_\alpha f_{(A,\alpha)}\cdot\tilde\zeta_p.
\end{equation}
\end{enumerate}
It is straightforward to check that this relation is an equivalence relation on $\Upsilon$. Also we shall see later that
if Equation \ref{eqn:equiv2} holds for some $\alpha$, then actually it holds for all faithful characters in $A^*$.
We again write 
$[m, \zeta]$ for the equivalence class containing the pair $(m, \zeta)$. In contrast with the previous case,
there are two types of equivalence classes in $\Upsilon/\cong_A$. Given $m\in \mathbb N$, if $m_p \le |A|$,
then 
\[
[m,\zeta] = \{(p^km_{p'} , \nu): k\le l, \mathbb C_{\nu_{p'}} \cong \mathbb C_{\zeta_{p'}}\}
\] 
and otherwise if $m_p > |A|$, then
\[
[m,\zeta] = \{(m, \nu): \mathbb C_{\nu_{p'}} \cong \mathbb C_{\zeta_{p'}} \,\mathrm{and} \, 
f_{(A, \alpha)}\cdot\tilde\nu_p =c_\alpha f_{(A,\alpha)}\cdot\tilde\zeta_p\,\mathrm{for\, some\, faithful}\, \alpha\in A^*\}.
\] 

We write $[\Upsilon_p]$ for a set of representatives of equivalence classes of the first type. As in the previous case, it can be identified with the set of all pairs $(m, \zeta)$ where $m$ is a $p'$-number. We also write $[\Upsilon_{>|A|}]$ for a set of representatives of equivalence classes of the second type. 

As in the previous section, for a given pair $(m,\zeta)\in \Upsilon$, we write $T_{\mathbb Z/m\mathbb Z, \mathbb C_\zeta}^A$ for the $A$-fibered biset subfunctor of $\crc$ generated by its biset subfunctor
$S_{\mathbb Z/m\mathbb Z, \mathbb C_\zeta}$. With this notation, we have
\begin{theorem}\label{thm:main2}
For any $(m, \zeta)\in\Upsilon$, there is an isomorphism of biset functors
\[
T_{\mathbb Z/m\mathbb Z, \mathbb C_\zeta}^A \cong \bigoplus_{(n, \nu)\in [m,\zeta]} S_{\mathbb Z/n\mathbb Z, \mathbb C_\nu}.
\]
\end{theorem}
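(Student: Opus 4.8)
The plan is to mimic closely the proof of Theorem \ref{thm:main1}, establishing the two inclusions
$\bigoplus_{(n,\nu)\in[m,\zeta]}S_{\ZZ/n\ZZ,\C_\nu}\subseteq T^A_{\ZZ/m\ZZ,\C_\zeta}$ and the reverse separately.
Write $S_{[m,\zeta]}$ for the right-hand side, which is a biset functor because $\crc$ is semisimple as a biset functor.
For the forward inclusion I would split into the two types of equivalence class. If $m_p\le|A|$, then by the definition of $\cong_A$ a
typical member of $[m,\zeta]$ is $(p^k m_{p'},\nu)$ with $k\le l$ and $\C_{\nu_{p'}}\cong\C_{\zeta_{p'}}$; here I can argue exactly as in
Theorem \ref{thm:main1}, writing $\tilde\nu=\tilde\nu_p\times\tilde\nu_{p'}$, expanding $\tilde\nu_p$ as a $\C$-linear combination of
irreducible characters of $\ZZ_{p^k}$ — which are available as twist bisets since $p^k\le|A|$ means $\ZZ_{p^k}$ embeds in $A$ — and then
using Lemma \ref{lem:actionCoprime} and inflation to realize $\tilde\nu$ inside the subfunctor generated by $\tilde\zeta$. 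If $m_p>|A|$, the
members of $[m,\zeta]$ share the same underlying group $\ZZ_m$, and for each faithful $\alpha\in A^*$ one has
$f_{(A,\alpha)}\tilde\nu_p=c_\alpha f_{(A,\alpha)}\tilde\zeta_p$; summing over faithful $\alpha$ and using that
$\tilde\nu_p=\sum_\alpha f_{(A,\alpha)}\tilde\nu_p$ (from the discussion preceding Lemma \ref{lem:actionTw}, using
Corollary \ref{lem:actionIpots} to discard the non-faithful and non-reduced contributions), I get $\tilde\nu_p$ expressed in terms of
the $f_{(A,\alpha)}\tilde\zeta_p$, hence $\tilde\nu$ lies in $T^A_{\ZZ/m\ZZ,\C_\zeta}$. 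Along the way I should record the deferred remark
that if Equation \ref{eqn:equiv2} holds for one faithful $\alpha$ it holds for all: this follows from Corollary \ref{cor:actionIso}, since
the automorphisms of $G$ act transitively on faithful characters of $A$ and carry $f_{(A,\alpha)}\tilde\nu_p$ to a scalar multiple of
$f_{(A,{}^\sigma\alpha)}\tilde\nu_p$.

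For the reverse inclusion, since $\crc$ and hence $T^A_{\ZZ/m\ZZ,\C_\zeta}$ is semisimple as a biset functor, it suffices to show every
simple biset subfunctor $S_{\ZZ_n,\C_\nu}\subseteq T^A_{\ZZ/m\ZZ,\C_\zeta}$ satisfies $(n,\nu)\cong_A(m,\zeta)$. As in Theorem
\ref{thm:main1}, minimality of $\ZZ_m$ for $T^A_{\ZZ/m\ZZ,\C_\zeta}$ and of $\ZZ_n$ for $S_{\ZZ_n,\C_\nu}$ forces
$\tilde\nu=\gamma\cdot\tilde\zeta$ where $\gamma$ is a $\C$-combination of transitive $A$-fibered $(\ZZ_n,\ZZ_m)$-bisets none of which
factors through a smaller group and none of which has a proper induction component. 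Using Theorem \ref{thm:factorization} to factor each
such transitive fibered biset — now into a twist, an inflation/isogation part, and an idempotent $E^{\ZZ_m}_{(L,\phi_L)}$ — I separate
the $p$- and $p'$-parts by Lemma \ref{lem:direct-product}. The $p'$-part argument is identical to the earlier proof and yields
$m_{p'}=n_{p'}$ together with $\C_{\nu_{p'}}\cong\C_{\zeta_{p'}}$, i.e.\ condition (1) of $\cong_A$. For the $p$-part I split on whether
$m_p\le|A|$: if so, the twist and $E_{(L,\phi_L)}$ factors coming from groups of order $\le|A|$ are exactly what is allowed, and one checks
$n_p\le|A|$ is forced (a larger $n_p$ would require a faithful character of a group bigger than $A$, impossible), giving the first alternative
of condition (2); if $m_p>|A|$, then $L$ must equal $A$ and the idempotent is some $E^{\ZZ_m}_{(A,\alpha)}$, and translating the action of
a twist composed with $E^{\ZZ_m}_{(A,\alpha)}$ into characters via Corollary \ref{cor:actionTw} and the identity
$\tilde\zeta_p=\sum_\alpha f_{(A,\alpha)}\tilde\zeta_p$ shows $f_{(A,\alpha)}\tilde\nu_p$ is a scalar multiple of
$f_{(A,\alpha)}\tilde\zeta_p$, which is exactly Equation \ref{eqn:equiv2}; in particular $n_p=m_p$.

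The step I expect to be the main obstacle is the $p$-part of the reverse inclusion in the case $m_p>|A|$: unlike the situation in Theorem
\ref{thm:main1}, the transitive summands of $\gamma$ now genuinely involve the new idempotents $E^{\ZZ_m}_{(A,\alpha)}$, and one must
carefully push the decomposition of Theorem \ref{thm:factorization}(ii) through the linearization map and keep track of how twists
interact with the projections $f_{(A,\alpha)}$ — in particular to see that the scalars $c_\alpha$ defined by Equation \ref{eqn:equiv2} are
forced to exist, rather than merely being compatible with the relation. Here the key computational inputs are Corollary \ref{cor:actionTw}(ii)
(that $E^{\ZZ_m}_{(A,\alpha)}$ acts as the projection onto extensions of $\alpha$), Lemma \ref{lem:actionTw} (the precise effect of a twist
on $\tilde\nu_\alpha$), and Corollary \ref{lem:actionIpots}, which together pin down the action on each $f_{(A,\alpha)}$-component. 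Once
both inclusions are in hand, the decomposition follows, completing the proof.
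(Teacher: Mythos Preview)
Your outline is essentially the paper's own proof: the same two-inclusion strategy, the same split on $m_p\le |A|$ versus $m_p>|A|$, the same use of Theorem~\ref{thm:factorization} and Corollary~\ref{lem:actionIpots} to control the $p$-part, and the same appeal to Lemma~\ref{lem:direct-product} to strip off the $p'$-part (which, since $A$ is a $p$-group, is just an ordinary biset argument). The forward inclusion in both regimes matches the paper verbatim.

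There is, however, one genuine step you gloss over in the reverse inclusion when $m_p>|A|$: your ``in particular $n_p=m_p$'' does not follow from what you have written. After factoring a transitive summand via Theorem~\ref{thm:factorization}(ii) as
\[
\tw_{\mathbb Z_{n_p}}^{\tilde\phi}\,\infl^{\mathbb Z_{n_p}}_{(\mathbb Z_{n_p})/K}\,\Isom^{\tilde\eta}\,e_{(L,\phi_L)},
\]
you must still argue that $K=1$. The paper does this by observing that, since $n_p>|A|$, the twist character $\tilde\phi:\mathbb Z_{n_p}\to A$ has non-trivial kernel $K'$; whether $K'\ge K$ or $K'\le K$, one can commute (or partially commute) the twist past the inflation, so that the whole term lies in the image of a genuine inflation map. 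Primitivity of $\tilde\nu$ then forces at least one summand with $K=1$, whence $(\mathbb Z_{n_p})/K\cong\mathbb Z_{m_p}$ gives $n_p=m_p$, and hence \emph{all} summands have this shape. Only after that does the paper invoke $e_{(L,\lambda)}\tilde\zeta=0$ unless $L=A$, then Corollary~\ref{cor:actionIso} and Lemma~\ref{lem:actionTw}, and finally multiplies by $f_{(A,\alpha)}$ to extract the scalar $c_\alpha$. Your sketch has the right ingredients for the last part, but the $K=1$ argument is the step you should not skip; it is precisely where the hypothesis $m_p>|A|$ (hence $n_p>|A|$, hence $\ker\tilde\phi\neq 1$) is used in an essential way.

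A small presentational point: Theorem~\ref{thm:factorization} is stated only for cyclic $p$-groups, so you should first split into $p$- and $p'$-parts via Lemma~\ref{lem:direct-product} and \emph{then} apply the factorization to the $p$-part, rather than the order you indicate.
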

\begin{proof}
Given $(m, \zeta)\in\Upsilon$. We let $S_{[m,\zeta]}$ be 
the right hand side of the above isomorphism. As in the previous case, we first show that $S_{[m,\zeta]}
\subseteq T_{\mathbb Z/m\mathbb Z, \mathbb C_\zeta}^A$. 

If $m_p\le |A|$, then the proof is almost identical to the proof of the corresponding part of Theorem 
\ref{thm:main1}. We skip the details. 

For the second case, suppose $m_p > |A|$ and let $(m, \nu)\cong_A (m, \zeta)$ so that 
\[
\mathbb C_{\nu_p'} \cong \mathbb C_{\zeta_p'}\, \mathrm{and}\, f_{(A,\alpha)}\cdot\tilde\nu_p = c_\alpha 
f_{(A,\alpha)}\cdot \tilde\zeta_p
\]
for all faithful $\alpha\in A^*$ where $c_\alpha\in\mathbb C^\times$. We have to prove that $\tilde\nu$ can
be written as a linear combination of elements obtained from $\tilde\zeta$ by actions of fibered bisets. 

Fix a faithful character $\alpha\in A^*$. Since the linkage class of $(A, \alpha)$ consists of all pairs 
$(A, \beta)$ with $\beta\in A^*$ faithful, the second equality above implies that 
\[
f_{\{A, \alpha\}}\cdot\tilde\nu_p = \sum_{\beta\in A^*: \scriptsize{\mathrm{faithful}}} c_\beta f_{(A,\beta)}
\cdot\tilde\zeta_p. 
\]
Also since sum of the idempotents $f_{(K, \kappa)}$ is $1$ and by Corollary \ref{lem:actionIpots},
$f_{(K, \kappa)}$ annihilates all primitive characters if $(K, \kappa)$ is not reduced or if $K< A$, we obtain
\[
\tilde\nu_p =f_{\{A, \alpha\}}\cdot\tilde\nu_p = \sum_{\beta\in A^*: \scriptsize{\mathrm{faithful}}} c_\beta f_{(A,\beta)}
\cdot\tilde\zeta_p. 
\]
In particular, $\tilde\nu_p$ is a linear combination of elements obtained from $\tilde\zeta$ by actions of fibered
bisets, as required. Thus we have proved that $S_{[m, \zeta]}\subseteq T_{\mathbb Z/m\mathbb Z,\mathbb C_\zeta}^A$.

Conversely, let $S_{\mathbb Z/n\mathbb Z, \mathbb C_\nu}$ be a biset subfunctor of 
$T_{\mathbb Z/m\mathbb Z, \mathbb C_\zeta}^A$. We have to prove that $(n, \nu)\cong_A (m, \zeta)$.
By the choice of $(n, \nu)$, there is an element $X\in B^A(\mathbb Z/n\mathbb Z, \mathbb Z/m\mathbb Z)$
such that 
\[
\tilde\nu = X\cdot \tilde\zeta.
\]
We have two cases. First suppose that $m_p \le |A|$. Then we can represent the equivalence class
$[m, \zeta]$ by the pair $(m, \zeta)$ where $(m, p) = 1$. 

By Equation \ref{eqn:partial}, any transitive summand of $X$ can be written as
\begin{displaymath}
\textrm{Ind}_{P}^{\zn} \textrm{Inf}_{P/K}^{P} Y \,\textrm{Def}_{Q/L}^{Q} \textrm{Res}_{Q}^{\zm}
\end{displaymath}
for some $L\le Q\le \zm$ and $K\le P\le \zn$. Here $Y$ is a transitive $A$-fibered $(Q/L, P/K)$-
biset with stabilizing pair $(U, \phi)$ such that $p_1(U) = P/K, p_2(U) = Q/L$ and restrictions of $\phi$ to
$k_1(U)$ and $k_2(U)$ are both faithful. As in the previous case of Theorem \ref{thm:main1}, since both 
$\zeta$ and $\nu$ are primitive, any summand of $X$ where $|P/K| < n$ annihilates 
$\tilde\zeta$ and, also $\tilde\nu$ is not in the image of any transitive biset of the above form if $|Q/L| < m$. Hence $Y$ can be chosen as a linear combination of transitive $A$-fibered $(\zn, \zm)$-bisets with 
stabilizing pairs $(U, \phi)$ such that  $p_1(U) = \zn, p_2(U) = \zm$ and restrictions of $\phi$ to
$k_1(U)$ and $k_2(U)$ are both faithful. 

Given such a summand $Y$, by Lemma \ref{lem:direct-product}, we write $Y = Y_{p'}\times Y_p$ where $Y_p\in 
B^A(\mathbb Z/n_p\mathbb Z, 1)$ and $Y_{p'}\in B^A(\mathbb Z/n_{p'}\mathbb Z, \mathbb Z/m\mathbb Z)$.
Furthermore, since $A$ is a $p$-group, the group $B^A(\mathbb Z/n_{p'}\mathbb Z, \mathbb Z/m_{p'}\mathbb Z)$ can be identified by the Burnside group of $(\mathbb Z/n_{p'}\mathbb Z,\zm)$-bisets. In particular,
$Y_{p'}$ is a $(\mathbb Z/n_{p'}\mathbb Z, \mathbb Z/m\mathbb Z)$-biset which must be an isogation since
restrictions of $\phi$ to the corresponding subgroups must be faithful. Hence we get $n_{p'} = m_{p'}$ and 
$\mathbb C_{\nu_{p'}}\cong \mathbb C_{\zeta_{p'}}$ after identifying $\zm = \mathbb Z/n_{p'}\mathbb Z$.  

On the other hand, the above conditions imply that the stabilizing pair for $Y_p$ is of the form $(U_p, 
\phi_p)$ such that $p_1(U_p) = \mathbb Z/n_{p}\mathbb Z, p_2(U_p) = \mathbb Z/m_{p}\mathbb Z$ and 
restrictions of $\phi$ to $k_1(U_p)$ and $k_2(U_p)$ are both faithful.
Since $(m, p) =1$, then we necessarily have $(U_p, \phi_p) =((\mathbb Z/n_{p}\mathbb Z)\times 1, 
\phi\times 1)$ for some faithful $\varphi: \mathbb Z/n_{p}\mathbb Z \to A$. Hence, in this case, we must have 
$n_p\le |A|$ and $Y_p = \tw_{\mathbb Z/n_{p}\mathbb Z}^\varphi\infl_{1}^{\mathbb Z/n_{p}\mathbb Z}$.
Thence we have proved that the $p'$-parts of $n$ and $m$ are equal and $n_p, m_p\le |A|$, as required.

For the second case, if $m_p > |A|$, then we also have $n_p > |A|$. The condition on $p'$-parts can be 
obtained in the same way as the previous case. Hence, by Lemma \ref{lem:actionCoprime}, we can drop the indexes and write $m= m_p$ and $n =n_p$ and $Y = Y_p$. Also let $(U, \phi)$ be a stabilizing pair for $Y$. Then by 
Theorem \ref{thm:factorization}, we can factor $Y$ as
\[
[Y] = \tw_{\zn}^{\tilde\phi} \infl_{(\zn)/K}^{\zn}\Isom_{(\zn)/K, \zm}^{\tilde\eta}e_{(L,\phi_L)}
\]
where the notation is chosen according to Theorem \ref{thm:factorization}. Now suppose $K\not = 1$. Then
since $n>|A|$, the homomorphism $\tilde\phi: \zn\to A$ has a non-trivial kernel, say $K'$. If $K' > K$, then
$\tilde\phi$ can be regarded as a homomorphism $(\zn)/K\to A$ and hence we have
\[
[Y] = \infl_{(\zn)/K}^{\zn}\tw_{(\zn)/K}^{\tilde\phi}\Isom_{(\zn)/K, \zm}^{\tilde\eta}e_{(L,\phi_L)}.
\]
On the other hand, if $K'\le K$, then $(\zn)/K \cong \frac{(\zn)/K'}{K/K'}$, hence we have
\[
[Y] = \infl_{(\zn)/K'}^{\zn}\tw_{(\zn)/K'}^{\tilde\phi}\infl^{(\zn)/K'}_{(\zn)/K}\Isom_{(\zn)/K, \zm}^{\tilde\eta}e_{(L,\phi_L)}.
\]
In both cases, $Y\cdot \tilde\zeta$ is in the image of inflation maps. But by its choice $\tilde\nu$ is not in this image. 
Hence there is at least one transitive term $Y$ such that $K = 1$. But then since $\zm\cong (\zn)/K$, we must have $n = m$ and hence
any summand $Y$ must be of the form 
\[
[Y] = \tw_{\zm}^{\psi}\Isom_{\zm, \zm}^{\sigma}e_{(L,\lambda)}
\]
for some $\psi\in G^A, \sigma\in\Aut(\zm)$ and $\lambda\in L^A$. Therefore, we get
\[
\tilde\nu = \sum_{\psi, \sigma, \lambda} c_{\psi, \sigma, \lambda} \tw_{\zm}^{\psi}\Isom_{\zm, \zm}^{\sigma}e_{(L,\lambda)}\tilde\zeta
\]
where $c_{\psi, \sigma, \lambda}\in \mathbb C$ and the sum is over all $\psi\in G^A, \sigma\in\Aut(\zm), 
\lambda\in L^A$. 

Furthermore, by Corollary \ref{lem:actionIpots}, $e_{(L,\lambda)}\cdot\tilde\zeta = 0$ unless $L = A$. When $L = A$, 
then $e_{(A, \lambda)} = f_{(A, \lambda)}$ and we put $\tilde\zeta_\alpha = f_{A, \lambda}\cdot\tilde\zeta$. Also by 
Corollary \ref{cor:actionIso} and by Lemma \ref{lem:actionTw}, we obtain
\[
\tilde\nu = \sum_{\psi, \sigma, \lambda} c_{\psi, \sigma, \lambda} \tilde\zeta_{\psi\cdot{}^\sigma\alpha}
\]
Finally since $f_{(A,\alpha)}\cdot\tilde\zeta_\beta =(f_{(A,\alpha)}\cdot f_{(A,\beta)})\cdot\tilde\zeta_\beta = 0$ unless $\alpha = \beta$ and $f_{(A,\alpha)}\cdot\tilde\zeta_\alpha = \tilde\zeta_\alpha$, multiplying the above equality by $f_{(A, \alpha)}$ we get
\[
f_{(A,\alpha)}\cdot\tilde\nu = \sum_{\psi, \sigma, \lambda: \alpha = \phi\cdot{}^\sigma\alpha} c_{\psi, \sigma, \lambda} f_{(A,\alpha)}\cdot\tilde\zeta = c_\alpha f_{(A,\alpha)}\cdot\tilde\zeta
\]  
for some non-zero complex number $c_\alpha$. Note that the same argument shows that if $\beta\in A^*$
is another faithful character, then $f_{A,\beta}\cdot\tilde\nu  = c_\beta f_{A,\beta}\cdot\tilde\zeta$ for some
non-zero complex number $c_\beta$. 

This completes the proof of the last claim. Hence we have shown that $(n, \nu)\cong_A (m, \zeta)$, and
hence $T_{\zm, \mathbb C_\zeta}^A = S_{[m,\zeta]}$.
\end{proof}

As in the previous case, the functor $T_{\zm,\mathbb C_\zeta}^A$ is simple. Proof is almost identical to the 
previous case, we skip details.
To identify it, first assume $(m, \zeta)\in [\Upsilon_p]$. Then as in the case of large enough fiber group, we have an isomorphism
\[
T_{\zm, \mathbb C_\zeta}^A \cong S_{\zm, 1,1, \mathbb C_\zeta}^A.
\]
Note that although the quadruple parametrizing this simple functor is the same as the one from the previous case, evaluations of the corresponding simple functors are not the same for large groups.

On the other hand, if $(m, \zeta)\in [\Upsilon_{>|A|}]$, then, by its construction, we have 
\[
T_{\zm, \mathbb C_\zeta}^A (\zm) \cong \bigoplus_{\nu: (m, \nu) \cong_A (m, \zeta)} \mathbb C_\nu 
\]
as $\mathbb C\Aut(\zm)$-modules. Fix a faithful character $\alpha$ of $A$. 
In $T_{\zm, \mathbb C_\zeta}^A (\zm)$, the one-dimensional subspace generated by $\tilde\zeta_\alpha\times \zeta_{p'} = (f_{(A, \alpha)}\times 1) \cdot \tilde\zeta$ is
$\Gamma_{(\zm, A, \alpha)}$-invariant. Indeed, any element of this group is of the form 
 $\Tw^\phi_{\zm}\Isom^\sigma_{\zm,\zm}e_{(A, \alpha)}\times 1$ where we write $1$ for the identity biset for
the $p'$-part of $\zm$, for some $\phi\in \zm^A$ and $\sigma\in \Aut(\zm)$ such that 
 $\phi\cdot {}^\sigma\alpha = \alpha$. Thus given such an element 
$\Tw^\phi_{\zm}\Isom^\sigma_{\zm,\zm}e_{(A, \alpha)}$
in $\Gamma_{(\zm, A, \alpha)}$, we get, by Corollary \ref{cor:actionIso} and Lemma \ref{lem:actionTw}, that
\[
\Tw^\phi_G\Isom^\sigma_{G,G}e_{(A, \alpha)} \cdot \tilde\zeta_\alpha = \rho_{\zeta, \alpha}(\phi, \sigma)\tilde\zeta_\alpha.
\]
for some complex number $\rho_{\zeta, \alpha}(\phi, \sigma)$. Let $\mathbb C_\zeta^\alpha$ be this one-dimensional $\mathbb C\Gamma_{(\zm, A, \alpha)}$-module. Then
clearly, 
\[
\mathbb C_\zeta^\alpha = (f_{(A, \alpha)}\times 1)\cdot T_{\zm, \mathbb C_\zeta}^A (\zm) \quad \mathrm{and}\quad 
 T_{\zm, \mathbb C_\zeta}^A (\zm) =\bar E_{\zm}\otimes_{\mathbb C\Gamma_{(\zm, A, \alpha)}} \mathbb C_\zeta^\alpha.
\]
Therefore by \cite[Theorem 9.2]{bolcay}, the simple functor $T_{\zm, \mathbb C_\zeta}^A$ is parameterized by the quadruple $(\zm, A, \alpha, \mathbb C_\zeta^\alpha)$. This completes the proof of the following theorem.
\begin{theorem} Let $p$ be a prime number and $A$ be a finite cyclic $p$-group. Then 
\begin{enumerate}
\item[(i)] The $A$-fibered biset subfunctor $T_{\zm, \mathbb C_\zeta}^A$ of $\mathbb C R_\mathbb C$ is simple. 
\item[(ii)] If $(m, \zeta)\in[\Upsilon_p]$, then $T_{\zm, \mathbb C_\zeta}^A$ is isomorphic to the simple functor $S_{\zm, 1, 1, \mathbb C_\zeta}^A$.
\item[(iii)]  If $(m, \zeta)\in[\Upsilon_{>|A|}]$, then $T_{\zm, \mathbb C_\zeta}^A$ is isomoriphic to the simple functor $S_{\zm, A, \alpha, \mathbb C_\zeta^\alpha}^A$.
\end{enumerate}
\end{theorem}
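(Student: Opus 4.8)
\emph{Plan.} All three statements are read off from Theorem~\ref{thm:main2}, the action formulas of Section~\ref{sec:main}, and the classification of simple $A$-fibered biset functors with cyclic minimal group recalled in Section~\ref{sec:paramsimple}. Since $T_{\zm,\mathbb C_\zeta}^A$ depends only on the class $[m,\zeta]$, we may assume $(m,\zeta)$ is the representative with $m$ coprime to $p$ when $m_p\le|A|$, and that $(m,\zeta)\in[\Upsilon_{>|A|}]$ otherwise; by Theorem~\ref{thm:main2} we already have $T_{\zm,\mathbb C_\zeta}^A\cong\bigoplus_{(n,\nu)\in[m,\zeta]}S_{\zn,\mathbb C_\nu}$ as biset functors, so in particular $\zm$ is a minimal group. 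For (i) it remains to prove that $T_{\zm,\mathbb C_\zeta}^A$ is simple as an $A$-fibered biset functor; for (ii) and (iii) it remains to identify the evaluation $T_{\zm,\mathbb C_\zeta}^A(\zm)$ as a module over the essential algebra $\bar E_{\zm}$.

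\emph{Part (i).} Let $0\subsetneq T\subseteq T_{\zm,\mathbb C_\zeta}^A$ be a nonzero $A$-fibered biset subfunctor. Restricted to the biset category it lies inside the semisimple functor $\crc$, hence it contains a simple biset subfunctor $S_{\zn,\mathbb C_\nu}$ with $(n,\nu)\cong_A(m,\zeta)$, so that $\tilde\nu\in T(\zn)$; it therefore suffices to show $\tilde\zeta$ lies in the $A$-fibered subfunctor generated by $\tilde\nu$. If $m_p\le|A|$, this is the argument used for the simplicity statement following Theorem~\ref{thm:main1}: by Lemma~\ref{lem:actionCoprime} one writes $\tilde\nu=\tilde\nu_p\times\tilde\zeta$ and recovers $\tilde\zeta$ using that twist bisets are invertible and $\Def\circ\Inf=\id$. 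If $m_p>|A|$, then $\cong_A$ forces $n=m$, and by Corollary~\ref{lem:actionIpots} together with Proposition~\ref{pro:LinkageCyclic} one has $f_{(A,\alpha)}\cdot\tilde\nu_p=c_\alpha\,f_{(A,\alpha)}\cdot\tilde\zeta_p$ with $c_\alpha\in\mathbb C^\times$ for every faithful $\alpha\in A^*$; hence $\tilde\zeta_p=\sum_\alpha f_{(A,\alpha)}\cdot\tilde\zeta_p=\sum_\alpha c_\alpha^{-1}\,f_{(A,\alpha)}\cdot\tilde\nu_p$ lies in the subfunctor generated by $\tilde\nu$, and composing with an isogation on the $p'$-part yields $\tilde\zeta$. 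In either case $T=T_{\zm,\mathbb C_\zeta}^A$, so $T_{\zm,\mathbb C_\zeta}^A$ is simple.

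\emph{Parts (ii) and (iii).} Because $T_{\zm,\mathbb C_\zeta}^A$ is generated by $\tilde\zeta$ and $\zm$ is minimal, $T_{\zm,\mathbb C_\zeta}^A(\zm)=\bar E_{\zm}\cdot\tilde\zeta$. When $(m,\zeta)\in[\Upsilon_p]$, $m$ is coprime to $p$, so the $m$-torsion of the $p$-group $A$ is trivial and Theorem~\ref{thm:essKnown}(i) gives $\bar E_{\zm}\cong\mathbb C\Aut(\zm)$ with $(1,1)$ as its unique reduced pair (equivalently $B^A(\zm,\zm)\cong B(\zm,\zm)$ by Lemma~\ref{lem:direct-product}); as $T_{\zm,\mathbb C_\zeta}^A(\zm)\cong\mathbb C_\zeta$ as an $\Aut(\zm)$-module, the classification yields $T_{\zm,\mathbb C_\zeta}^A\cong S_{\zm,1,1,\mathbb C_\zeta}^A$, which is (ii). When $(m,\zeta)\in[\Upsilon_{>|A|}]$, fix a faithful $\alpha\in A^*$, which is reduced by Theorem~\ref{thm:redCrit}(ii). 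By Equation~\ref{eqn:nuAlpha} and the primitivity of $\zeta$, the element $\tilde\zeta_\alpha\times\zeta_{p'}=(f_{(A,\alpha)}\times 1)\cdot\tilde\zeta$ is nonzero, and the line it spans is stable under $\Gamma_{(\zm,A,\alpha)}$, hence is a one-dimensional $\mathbb C\Gamma_{(\zm,A,\alpha)}$-module $\mathbb C_\zeta^\alpha$ whose scalars are computed from Corollary~\ref{cor:actionIso} and Lemma~\ref{lem:actionTw}. Using orthogonality of the $f_{(A,\beta)}$ one checks $(f_{(A,\alpha)}\times 1)\cdot T_{\zm,\mathbb C_\zeta}^A(\zm)=\mathbb C_\zeta^\alpha$, and applying twist bisets (Lemma~\ref{lem:actionTw}) every $f_{(A,\beta)}$-component with $\beta$ faithful is reached from the $\alpha$-line, so $T_{\zm,\mathbb C_\zeta}^A(\zm)=\bar E_{\zm}\otimes_{\mathbb C\Gamma_{(\zm,A,\alpha)}}\mathbb C_\zeta^\alpha$; by \cite[Corollary~8.5]{bolcay} and \cite[Theorem~9.2]{bolcay} this identifies $T_{\zm,\mathbb C_\zeta}^A$ with $S_{\zm,A,\alpha,\mathbb C_\zeta^\alpha}^A$, which is (iii).

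\emph{Main obstacle.} The delicate part is (iii): one must verify $\tilde\zeta_\alpha\neq 0$ (from the primitivity of $\zeta$ via Equation~\ref{eqn:nuAlpha}) and, more substantially, that $\bar E_{\zm}\otimes_{\mathbb C\Gamma_{(\zm,A,\alpha)}}\mathbb C_\zeta^\alpha$ fills out the whole evaluation $T_{\zm,\mathbb C_\zeta}^A(\zm)$ rather than a proper submodule, i.e.\ that $\bar E_{\zm}$ acting on the $\alpha$-line reaches every faithful $f_{(A,\beta)}$-component; this is exactly where Corollary~\ref{cor:actionIso} and Lemma~\ref{lem:actionTw} are used, essentially re-running the reverse-inclusion computation from the proof of Theorem~\ref{thm:main2}. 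Reducing everything to $p$-groups via Lemma~\ref{lem:actionCoprime} and keeping track of $p'$-parts is routine.
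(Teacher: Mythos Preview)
Your proposal is correct and follows essentially the same approach as the paper: the simplicity in (i) is reduced to recovering $\tilde\zeta$ from any $\tilde\nu$ with $(n,\nu)\cong_A(m,\zeta)$ (via inverting twists and deflation when $m_p\le|A|$, and via inverting the scalars $c_\alpha$ on each $f_{(A,\alpha)}$-component when $m_p>|A|$), and (ii)--(iii) are read off from the evaluation at $\zm$ together with the parametrization from \cite[Theorem~9.2]{bolcay}. The paper in fact skips the details of (i) entirely (``almost identical to the previous case'') and states the key identities $\mathbb C_\zeta^\alpha=(f_{(A,\alpha)}\times 1)\cdot T_{\zm,\mathbb C_\zeta}^A(\zm)$ and $T_{\zm,\mathbb C_\zeta}^A(\zm)=\bar E_{\zm}\otimes_{\mathbb C\Gamma_{(\zm,A,\alpha)}}\mathbb C_\zeta^\alpha$ as ``clearly'', so your write-up is if anything more explicit than the original; one minor quibble is that the equation $f_{(A,\alpha)}\cdot\tilde\nu_p=c_\alpha f_{(A,\alpha)}\cdot\tilde\zeta_p$ in your (i) comes directly from the definition of $\cong_A$ rather than from Corollary~\ref{lem:actionIpots} and Proposition~\ref{pro:LinkageCyclic}.
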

With this theorem, we have completed the proof of Theorem \ref{thm:intromain2}. The general case where the fibre group $A$ is of composite order, or where is arbitrary subgroup of $\CC^\times$ can be obtained easily by combining results from Theorem \ref{thm:intromain1} and Theorem \ref{thm:intromain2}.

Again as a corollary, we obtain the following result on restriction of simple fibered biset functors that appear above.
\begin{coro}
Let $A$ be a finite cyclic $p$-group. Then
\begin{enumerate}
\item For any $(m, \zeta)\in [\Upsilon_p]$, there is an isomorphism of biset functors
\[
S_{\zm, 1, 1, \mathbb C_\zeta}^A \cong \bigoplus_{(n,\nu)\in [m,\zeta]} S_{\zn, \mathbb C_\nu}.
\]
\item For any $(m, \zeta)\in [\Upsilon_{> |A|}]$, there is an isomorphism of biset functors
\[
S_{\zm, A, \alpha, \mathbb C_\zeta^\alpha}^A \cong \bigoplus_{(n,\nu)\in [m,\zeta]} S_{\znn, \mathbb C_\nu}.
\]
\end{enumerate}
\end{coro}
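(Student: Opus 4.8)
The plan is to deduce this directly from Theorem \ref{thm:main2} together with the immediately preceding identification theorem, using that the category of bisets embeds into the category of $A$-fibered bisets. First I would recall that the embedding sending an ordinary $(G,H)$-biset $(G\times H)/U$ to the $A$-fibered biset $(G\times H)/(U,1)$ is a faithful functor, and that under this embedding the $A$-fibered biset functor $\crc$ restricts to the ordinary biset functor $\crc$. Consequently, any isomorphism of $A$-fibered biset functors between subfunctors of $\crc$ restricts to an isomorphism of the underlying biset functors.

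Next I would invoke the preceding theorem, which gives, for $(m,\zeta)\in[\Upsilon_p]$, an isomorphism of $A$-fibered biset functors $S_{\zm, 1, 1, \mathbb C_\zeta}^A \cong T_{\zm, \mathbb C_\zeta}^A$, and, for $(m,\zeta)\in[\Upsilon_{>|A|}]$, an isomorphism $S_{\zm, A, \alpha, \mathbb C_\zeta^\alpha}^A \cong T_{\zm, \mathbb C_\zeta}^A$. Restricting each of these along the embedding above yields isomorphisms of biset functors with the same right-hand side $T_{\zm, \mathbb C_\zeta}^A$ viewed as a biset functor.

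Finally, Theorem \ref{thm:main2} already provides an isomorphism of biset functors
\[
T_{\zm, \mathbb C_\zeta}^A \cong \bigoplus_{(n,\nu)\in [m,\zeta]} S_{\zn, \mathbb C_\nu}.
\]
Composing this with the two restricted isomorphisms from the previous step gives exactly the two asserted isomorphisms of biset functors, one for each type of equivalence class. I would close by remarking that, in case (1), the representative with $m$ a $p'$-number may be used, and in case (2) the class $[m,\zeta]$ is described explicitly in Section \ref{sec:mainSmall}, so both direct sums are over the index sets stated.

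The argument is essentially formal; the only point requiring a word of care is the compatibility of the two inputs, namely that the biset-functor isomorphism coming from Theorem \ref{thm:main2} and the one obtained by forgetting fibered structure in the identification theorem both have $T_{\zm, \mathbb C_\zeta}^A$ on one side, so that they genuinely compose. Since $T_{\zm, \mathbb C_\zeta}^A$ is \emph{defined} as a single $A$-fibered biset subfunctor of $\crc$ (hence has a well-defined underlying biset functor), this compatibility is automatic, and I expect no real obstacle beyond spelling out the embedding of biset categories into fibered biset categories.
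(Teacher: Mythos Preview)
Your proposal is correct and follows exactly the approach the paper intends: the corollary is stated without proof as an immediate consequence of Theorem~\ref{thm:main2} together with the identification theorem just above it, and your composition of the two isomorphisms (after restricting along the embedding of bisets into $A$-fibered bisets) is precisely how this deduction works.
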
 

\end{document}